\newtheorem{lemma}{Lemma}
\newtheorem{thm}{Theorem}
\newtheorem{defi}{Definition}
\newtheorem{prop}{Proposition}
\newtheorem{ass}{Assumption}
\newtheorem{rmk}{Remark}
\newenvironment{proof}{{\bf Proof\,\,}}{\endproof\par}
\newcounter{spb}
\def \openbox{$\sqcup\llap{$\sqcap$}$}
\def \endproof{\enskip \null \nobreak \hfill \openbox \par}
\def\bI{\bm{I}}
\def\b1{\bm{1}}
\newcommand{\R}{\mathbb R}
\newcommand{\X}{\mathcal X}
\newcommand{\Y}{\mathcal Y}
\newcommand{\mO}{\mathcal O}
\newcommand{\mU}{\mathcal U}
\newcommand{\z}{\mathbf 0}
\newcommand{\lcirc}[1]{\scaleobj{.6}{\mbox{\Kr{#1}}}}
\DeclareMathOperator{\prox}{prox}
\DeclareMathOperator{\proj}{proj}
\DeclareMathOperator{\diam}{diam}
\DeclareMathOperator{\dist}{dist}
\DeclareMathOperator*{\argmin}{argmin}
\DeclareMathOperator*{\argmax}{argmax}
\title
{ Universal Gradient Descent Ascent Method for Nonconvex-Nonconcave Minimax Optimization}
\author{
 Taoli Zheng \\
 CUHK\\
  \texttt{tlzheng@se.cuhk.edu.hk} \\
  \And
Linglingzhi Zhu \\
 CUHK\\
  \texttt{llzzhu@se.cuhk.edu.hk} \\
  \And
  Anthony Man-Cho So \\
CUHK\\
  \texttt{manchoso@se.cuhk.edu.hk} \\
  \And
  José Blanchet \\
  Stanford University\\
  \texttt{jose.blanchet@stanford.edu}\\ 
  \And
  Jiajin Li\thanks{Corresponding author} \\
  Stanford University\\
  \texttt{jiajinli@stanford.edu} 
}
\begin{document}

\maketitle

\begin{abstract}
Nonconvex-nonconcave minimax optimization has received intense attention over the last decade due to its broad applications in machine learning. Most existing algorithms rely on one-sided information, such as the convexity (resp. concavity) of the primal (resp. dual) functions, or other specific structures, such as the Polyak-\L{}ojasiewicz (P\L{}) and Kurdyka-\L{}ojasiewicz (K\L{}) conditions.
 However, verifying these regularity conditions is challenging in practice. To meet this challenge, we propose a novel universally applicable single-loop algorithm, the doubly smoothed gradient descent ascent method (DS-GDA), which naturally balances the primal and dual updates. That is, DS-GDA with the same hyperparameters is able to uniformly solve nonconvex-concave, convex-nonconcave, and nonconvex-nonconcave problems with one-sided K\L{} properties, achieving convergence with $\mathcal{O}(\epsilon^{-4})$ complexity. Sharper (even optimal) iteration complexity can be obtained when the K\L{} exponent is known. Specifically, under the one-sided K\L{} condition with exponent $\theta\in(0,1)$, DS-GDA converges with an iteration complexity of $\mathcal{O}(\epsilon^{-2\max\{2\theta,1\}})$. They all match the corresponding best results in the literature. Moreover, we show that DS-GDA is practically applicable to general nonconvex-nonconcave problems even without any regularity conditions, such as the P\L{} condition, K\L{} condition, or weak Minty variational inequalities condition.  For various challenging nonconvex-nonconcave examples in the literature, including ``\texttt{Forsaken}'', ``\texttt{Bilinearly-coupled minimax}'', ``\texttt{Sixth-order 
polynomial}'', and ``\texttt{PolarGame}'', the proposed DS-GDA can all get rid of limit cycles. To the best of our knowledge, this is the first first-order algorithm to achieve convergence on all of these formidable problems.
\end{abstract}

\section{Introduction}
In this paper, we are interested in studying
nonconvex-nonconcave minimax problems of the form
\begin{equation}
\label{eq:prob}
	\min_{x \in \X} \max_{y \in \Y} f(x,y),
	\tag{P}
\end{equation}
	where $f:\R^{n} \times \R^{d} \rightarrow \R$ is nonconvex in $x$ and nonconcave in $y$, and $\X \in \R^{n}$, $\Y \in \R^{d}$ are convex compact sets.
Such problems have found significant applications in machine learning and operation research, including generative adversarial networks  \citep{goodfellow2020generative,arjovsky2017wasserstein}, adversarial training~\citep{madry2017towards,sinha2017certifying},  multi-agent reinforcement learning~\citep{dai2018sbeed,omidshafiei2017deep}, and  (distributionally) robust optimization \citep{ben2009robust,delage2010distributionally,levy2020large,gao2022wasserstein,bertsimas2011theory}, to name a few.  

For smooth functions, one natural idea is to use gradient descent ascent (GDA) \citep{lin2020gradient}, which applies gradient descent on the primal function and gradient ascent on the dual function. However, GDA is originally designed for the strongly-convex-strongly-concave problem, where either primal or dual players can dominate the game. When applying it to the nonconvex-concave case, a so-called two-timescale method can make it converge. In this scenario, the primal player is the dominant player in the game.  We can regard this one-step GDA scheme as an inexact subgradient descent on the inner max function $\max_{y \in \Y} f(x,y)$,  thus it is necessary for the dual update to be relatively faster than the primal update at each iteration.
However, this two-timescale GDA yields a high iteration complexity of $\mO(\epsilon^{-6})$. To achieve a lower iteration complexity, smoothed GDA (S-GDA) \citep{zhang2020single,yang2022faster} employs the Moreau-Yosida smoothing technique to stabilize the primal sequence. The resulting stabilized sequence can help S-GDA achieve a lower iteration complexity of $\mathcal{O}(\epsilon^{-4})$. Alternating Gradient Projection (AGP) \citep{xu2020unified} can also achieve this lower iteration complexity by adding regularizers to both primal and dual functions. It is worth noting that the convergence of all these algorithms is heavily contingent on the convexity/concavity of primal/dual functions, leading to asymmetric updates. In the convex-nonconcave scenario, the roles are reversed, and the dual player takes dominance in the game. To apply the aforementioned algorithms, their updates should be modified accordingly. Specifically, the stepsize in primal update of GDA is no longer diminishing and is constant. For S-GDA and AGP, the smoothing and regularized sides also need to be changed to guarantee the optimal convergence rate. 
However, when dealing with nonconvex-nonconcave problems, no player inherently dominates the other and all existing first-order algorithms cannot be guaranteed to converge to stationary points (see Definition \ref{def:1}), and they can even suffer from \textit{limit cycles}.  That is, the generated trajectories of all these algorithms will converge to cycling orbits that do not contain any stationary point of $f$. Such \textit{spurious} convergence phenomena arise from the inherent minimax structure of \eqref{eq:prob} and have no counterpart in pure minimization problems. Conceptually, nonconvex-nonconcave minimax optimization problems can be understood as a seesaw game, which means no player inherently dominates the other. More explicitly, the key difficulty lies in adjusting the primal and dual updates to achieve a good balance. Most existing works address this challenge by adding additional regularity conditions to ensure the automatic domination of one player over the other. Specifically, research works either impose the global Polyak-\L{}ojasiewicz (P\L{}) condition on the dual function $f(x,\cdot)$~\citep{yang2022faster,nouiehed2019solving,doan2022convergence,yang2020global} or assume the satisfaction of $\alpha$-dominance condition \citep{grimmer2020landscape,hajizadeh2022linear}. Both approaches fall into this line of research, enabling the adoption of algorithms and analysis for nonconvex-concave minimax problems. Although the recent work~\citep{li2022nonsmooth} extends the P\L{} condition to the one-sided Kurdyka-{\L}ojasiewicz (K\L{}) property, it is still hard to provide the explicit K\L{} exponent, and prior knowledge regarding which side satisfies the K\L{} condition is required to determine the appropriate side to employ extrapolation. If we choose the wrong side, it will result in a slow convergence (see Figure \ref{fig:wrong}). On another front, variational inequality (VI) provides a unified framework for the study of equilibrium/minimax problems \citep{nemirovski2004prox,korpelevich1976extragradient,gidel2018variational,yoon2021accelerated,mertikopoulos2018optimistic}. However, VI-related conditions are usually hard to check in practice. Hence, the convergence of the existing algorithms all highly rely on prior knowledge of the primal/dual function, which makes it paramount to design a universal algorithm for convex-nonconcave, nonconvex-concave, and nonconvex-nonconcave minimax problems.

We propose a new algorithm called the Doubly Smoothed Gradient Descent Ascent (DS-GDA) algorithm. DS-GDA builds upon S-GDA by applying the Moreau-Yosida smoothing technique to both the primal and dual variables, which allows a better trade-off between primal and dual updates. All hyperparameters, including the stepsize for gradient descent and ascent steps, and extrapolation parameters are carefully and explicitly controlled to ensure the sufficient descent property of a novel Lyapunov function that we introduce in our paper. The carefully selected variables automatically decide the dominant player, thereby achieving the balance of primal and dual updates by the interaction of parameters. Furthermore, the doubly smoothing technique enables the use of a set of symmetric parameters to achieve universality. This stands in sharp contrast to S-GDA, where only primal/dual function is smoothed. Specifically, the regularized function and the primal-dual updates in DS-GDA are inherently symmetric, which provides the possibility of applying the DS-GDA without prior information on the primal and dual functions. 

To validate our idea and demonstrate the universality of DS-GDA, we provide a visual representation of the feasible symmetric parameter selections by relating the regularizer to Lipschitz constants and step sizes. This graphical illustration, depicted in Figure \ref{fig:universal}, reveals that numerous parameter settings can be chosen to guarantee convergence, which showcases the flexibility of DS-GDA. An experiment for a nonconvex-nonconcave problem has also been done to test the efficiency of using symmetric parameters. 

We also evaluate the performance of DS-GDA on a range of representative and challenging nonconvex-nonconcave problems from the literature, which violate all known regularity conditions. These include the ``Forsaken'' example \citep[Example 5.2]{hsieh2021limits}, the ``Bilinearly-coupled minimax'' example \citep{grimmer2020landscape}, the ``Sixth-order polynomial'' example \citep{daskalakis2022stay, wang2019solving, chinchilla2022newton}, and the ``PolarGame'' example \citep{pethick2022escaping}. In all cases, DS-GDA successfully escapes limit cycles and converges to the desired stationary point, while other methods either suffer from the recurrence behavior or diverge. Moreover, our algorithm exhibits robustness to parameter selection (see Section \ref{sec:robust}), offering practical flexibility \footnote{All of our codes are implemented in MATLAB R2021a and can be found at \url{https://github.com/TaoliZheng/Doubly-Smoothed-Gradient-Descent-Ascent-}.}.

To corroborate its superior performance and have a better understanding of its convergence behaviors, we demonstrate that DS-GDA converges to a stationary point for nonconvex-concave, convex-nonconcave, and nonconvex-nonconcave problems that satisfy a one-sided K\L{} property.  By employing a single set of parameters, DS-GDA converges with an iteration complexity of $\mathcal{O}(\epsilon^{-4})$ across all these scenarios. Remarkably, DS-GDA achieves this without prior verification of these conditions. However, if we have access to a one-sided K\L{} condition or knowledge of the convexity (concavity) of the primal (dual) function, the range of allowable parameters can be expanded. What is more, DS-GDA attains a lower or even optimal iteration complexity of $\mathcal{O}(\epsilon^{-2\max\{2\theta,1\}})$ when the one-sided  K\L{} property with exponent $\theta\in(0,1)$ is satisfied. 
Notably, these convergence results match the best results for single-loop algorithms when the dual function is concave \citep{zhang2020single,li2022nonsmooth} or satisfies K\L{} condition \citep{li2022nonsmooth}. To the best of our knowledge, our work demonstrates, for the first time, the possibility of having a simple and unified single-loop algorithm for solving nonconvex-nonconcave, nonconvex-concave, and convex-nonconcave minimax problems. However, it remains an open question whether convergence results can be derived without any regularity conditions. This question is intriguing and warrants further theoretical investigation of nonconvex-nonconcave minimax problems in the future.

Our main contributions are summarized as follows:

 (i) We present DS-GDA, the first universal algorithm for convex-nonconcave, nonconvex-concave, and nonconvex-nonconcave problems with one-sided K\L{} property. A single set of parameters can be applied across all these scenarios, guaranteeing an iteration complexity of $\mathcal{O}(\epsilon^{-4})$. With the K\L{} exponent $\theta\in(0,1)$ of the primal or dual function, we improve the complexity to $\mathcal{O}(\epsilon^{-2\max\{2\theta,1\}})$. Our current convergence analysis achieves the best-known results in the literature.
 
 (ii) We demonstrate that DS-GDA converges on various challenging nonconvex-nonconcave problems, even when no regularity conditions are satisfied. This makes DS-GDA the first algorithm capable of escaping limit cycles in all these hard examples.

\section{Doubly Smoothed GDA}
In this section, we propose our algorithm (i.e., DS-GDA) for solving \eqref{eq:prob}. To start with, we introduce the blanket assumption, which is needed throughout the paper.
	\begin{ass}[Lipschitz gradient]\label{ass:1}
		The function $f$ is continuously differentiable and there exist positive constant $L_x,L_y>0$ such that for all $x,x' \in \X$ and $y,y'\in \Y$
		\[
		\begin{aligned}
			&\|\nabla_{x}f(x,y)-\nabla_{x}f(x',y')\| \leq L_x(\|x-x'\|+\|y-y'\|),\\
			&\|\nabla_{y}f(x,y)-\nabla_{y}f(x',y')\| \leq L_y(\|x-x'\|+\|y-y'\|).
		\end{aligned}
		\]
		For simplicity, we assume $L_y=\lambda L_x=\lambda L$ with $t>0$.
	\end{ass}

For general smooth nonconvex-concave problems, a simple and natural algorithm is GDA, which suffers from oscillation even for the bilinear problem $\min_{x\in [-1,1]}\max_{y\in [-1,1]} xy$. To address this issue, a smoothed GDA algorithm that uses Moreau-Yosida smoothing techniques is proposed in \citep{zhang2020single}. Specifically, they introduced an auxiliary variable $z$ and defined a regularized function as follows:
\[
F(x,y,z) \coloneqq f(x,y)+\frac{r}{2}\|x-z\|^2.
\]
The additional quadratic term smooths the primal update. Consequently, the algorithm can achieve a better trade-off between primal and dual updates. We adapt the smoothing technique to the nonconvex-nonconcave setting, where the balance of primal and dual updates is not a trivial task. To tackle this problem, we also smooth the dual update by subtracting a quadratic term of dual variable and propose a new regularized function $F:\R^{n} \times \R^{d}\times \R^{n} \times \R^{d} \rightarrow \R$ as
\[
F(x,y,z,v)\coloneqq f(x,y)+\frac{r_1}{2}\|x-z\|^2-\frac{r_2}{2}\|y-v\|^2
\]
with different smoothed parameters $r_1>L_x$, $r_2>L_y$ for $x$ and $y$, respectively. 
Then, our DS-GDA is formally presented in Algorithm \ref{alg:1}. 
	\begin{algorithm}[hbt!]
		\caption{Doubly Smoothed GDA (DS-GDA)}\label{alg:1}
		\KwData{Initial $x^0,y^0,z^0,v^0$, stepsizes $\alpha,c>0$, and extrapolation parameters $0<\beta,\mu<1$}
		\For{$t=0,\ldots,T$}{
	     $x^{t+1}=\proj_{\X}(x^t-c\nabla_{x}F(x^t,y^t,z^t,v^t))$;\\
		$y^{t+1}=\proj_{\Y}(y^t+\alpha\nabla_{y}F(x^{t+1},y^t,z^t,v^t))$;\\
		$z^{t+1}=z^t+\beta(x^{t+1}-z^t)$;\\
		$v^{t+1}=v^t+\mu(y^{t+1}-v^t)$. 
		}
	\end{algorithm} 
 
The choice of $r_1$ and $r_2$ is crucial for the convergence of the algorithm in both theoretical and practical senses. In particular, when $r_1=r_2$, it reduces to the \textit{proximal-point mapping} proposed in \citep{liu2021first} and inexact proximal point method (PPM) is only known to be convergent under certain VI conditions. Even with the exact computation of proximal mapping, PPM will diverge in the absence of regularity conditions \citep{grimmer2020landscape}. By contrast, with an unbalanced $r_1$ and $r_2$, our algorithm can always converge.  The key insight here is to carefully adjust $r_1$ and $r_2$ to balance the primal-dual updates, ensuring the sufficient descent property of a novel Lyapunov function introduced in our paper. In fact, as we 
will show later in Section \ref{sec:convergence}, $r_1$ and $r_2$ are typically not equal theoretically and practically. The two auxiliary variables $z$ and $v$, which are updated by averaging steps, are also indispensable in our convergence proof. Intuitively, the exponential averaging applied to proximal variables $ z$ and $v$ ensures they do not deviate too much from $x$ and $y$, contributing to sequence stability.

We would like to highlight that the way we use the Moreau-Yosida smoothing technique is a notable departure from the usual approach. Smoothing techniques are commonly invoked when solving nonconvex-concave problems to achieve better iteration complexity \citep{zhang2020single,li2022nonsmooth,yang2022faster}. However, we target at smoothing both the primal and dual variables with different magnitudes to ensure global convergence.

\section{Convergence Analysis}\label{sec:convergence}
The convergence result of the proposed DS-GDA (i.e., Algorithm \ref{alg:1}) will be discussed in this section. To illustrate the main result, we first provide 
the stationarity measure in Definition \ref{def:1}.

\begin{defi}[Stationarity measure]\label{def:1}
	The point $(\hat{x},\hat{y})\in \mathcal{X}\times \mathcal{Y}$ is said to be an\\ 
{\rm (i)} $\epsilon$-game stationary point {\rm (GS)} if 
			\[
\dist(\z,\nabla_{x}f(\hat{x},\hat{y})+\partial\b1_\X(\hat{x}))\leq \epsilon \quad \text{and} \quad \dist(\z,-\nabla_{y}f(\hat{x},\hat{y})+\partial\b1_\Y(\hat{y}))\leq \epsilon;
			\]
 {\rm (ii)} $\epsilon$-optimization stationary point {\rm (OS)} if  
			\[
			\left\|\prox_{\max_{y\in \Y} f(\cdot,\hat{y})+\b1_{\X}}(\hat{x})-\hat{x}\right\|\leq\epsilon.
   \]
\end{defi}

\begin{rmk}
The definition of game stationary point is a natural extension of the first-order stationary point in minimization problems. It is a necessary condition for local minimax point \citep{jin2020local} and has been widely used in nonconvex-nonconcave optimization \citep{diakonikolas2021efficient,lee2021fast}. We have 
investigated their relationships in Appendix \ref{sec-stationary}. 
\end{rmk}

\subsection{Complexity under Nonconvex-(Non)concave Setting}\label{sec:NC-KL}
Inspired by \citep{zhang2020single,li2022nonsmooth}, we consider a novel Lyapunov function $\Phi:\R^{n} \times \R^{d}\times \R^{n} \times \R^{d} \rightarrow \R$ defined as follows:
  \[
  \begin{aligned}
       \Phi(x,y,z,v)&\coloneqq \underbrace{F(x,y,z,v)-d(y,z,v)}_\text{Primal descent}+\underbrace{p(z,v)-d(y,z,v)}_\text{Dual ascent}+\underbrace{q(z)-p(z,v)}_\text{Proximal ascent}+\underbrace{q(z)-\underline{F}}_\text{Proximal descent}+\underline{F}\\
       &=F(x,y,z,v)-2d(y,z,v)+2q(z), 
  \end{aligned}   
  \]
  where $d(y,z,v)\coloneqq\min_{x\in \X}F(x,y,z,v)$, $p(z,v)\coloneqq\max_{y\in \Y} d(y,z,v)$, $q(z)\coloneqq\max_{v\in\R^d} p(z,v)$, and $\underline{F}\coloneqq\min_{z\in \R^n} q(z)$. Obviously, this Lyapunov function is lower bounded, that is, $\Phi\geq\underline{F}$. To gain a better understanding of the rationale behind the construction of $\Phi$, it is observed that the Lyapunov function has a strong connection to the updates of the iterations.  The primal update corresponds to ``primal descent'' and gradient ascent in dual variable corresponds to the ``dual ascent''. The averaging updates of proximal variables could be understood as an approximate gradient descent of $p(z,v)$ and an approximate gradient ascent of $g(v)$, resulting in the ``proximal descent'' and ``proximal ascent'' terms in the Lyapunov function. Compared with that in \citep{zhang2020single,li2022nonsmooth}, we have an additional ``proximal ascent'' term. It is introduced by the regularized term for dual variable in $F$ and the update of proximal variable $v$. Essentially, the ``nonconcavity'' of $f(x,\cdot)$ brings the additional term. With this Lyapunov function, we can establish the following basic descent property as our first important result. 

\begin{prop}[Basic descent estimate]\label{prop:suff_dec_1}
	Suppose that Assumption \ref{ass:1} holds and $r_1\geq 2L$, $r_2\geq 2\lambda L$ with the parameters\\
  \[
  \begin{aligned}
  0&<c\leq \min\left \{\frac{4}{3(L+r_1)},\frac{1}{6\lambda L}\right \}, 0<\alpha\leq  \min\left\{\frac{2}{3\lambda L\sigma^2}, \frac{1}{6L_d}, \frac{1}{5\lambda \sqrt{\lambda+5}L}\right\}, \\
  0 &< \beta \leq \min\left\{\frac{24r_1}{360r_1+5r_1^2\lambda+(2\lambda L+5r_1)^2},\frac{\alpha \lambda^2 L^2}{384r_1(\lambda+5)(\lambda+1)^2}\right\}, \\
  0&<\mu \leq \min\left\{\frac{2(\lambda+5)}{2(\lambda+5)+\lambda^2 L^2},\frac{\alpha\lambda^2L^2}{64r_2(\lambda+5)}\right\}.  
  \end{aligned}
  \]
		Then for any $t\geq 0$,
	    \begin{align}\label{eq:suff_dec}
		\Phi^t-\Phi^{t+1}\geq\ &  \frac{r_1}{32}\|x^{t+1}-x^t\|^2+\frac{r_2}{15}\|y^t-y^t_{+}(z^t,v^t)\|^2+\frac{r_1}{5\beta}\|z^t-z^{t+1}\|^2+\frac{r_2}{4\mu}\|v_{+}^t(z^{t+1})-v^t\|^2\notag\\
   &-4r_1\beta\|x(z^{t+1},v(z^{t+1}))-x(z^{t+1},v_+^t(z^{t+1}))\|^2.
	\end{align}
	where $\sigma\coloneqq\frac{2cr_1+1}{c(r_1-L)}$ and $L_d\coloneqq\left(\frac{\lambda L}{r_1-L}+2\right)\lambda L+r_2$. Moreover, we have 
	$y_{+}(z,v)\coloneqq\proj_{\Y}(y+\alpha\nabla_{y}F(x(y,z,v),y,z,v))$ and 
$v_{+}(z)\coloneqq v+\mu(y(x(z,v),z,v)-v)$ with the following definitions: 
     {\rm (i)} $x(y,z,v)\coloneqq\argmin_{x\in\X} F(x,y,z,v)$, {\rm (ii)} $y(x,z,v)\coloneqq\argmax_{y\in \Y} F(x,y,z,v)$, 
     {\rm (iii)} $x(z,v)\coloneqq\argmin_{x\in \X} \max_{y\in \Y} F(x,y,z,v)$, {\rm (iv)} $v(z)\coloneqq\argmax_{v\in \R^d} P(z,v)$. 
	\end{prop}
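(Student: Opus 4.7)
The plan is to measure $\Phi^{t}-\Phi^{t+1}$ by decomposing $\Phi=(F-d)+(p-d)+(q-p)+(q-\underline F)+\underline F$ along the four blocks named in the statement and bounding each block separately by the single update it is aligned with. The crucial analytic enabler is that, under $r_1\geq 2L$ and $r_2\geq 2\lambda L$, the function $F(\cdot,y,z,v)$ is $(r_1-L)$-strongly convex in $x$ and $-F(x,\cdot,z,v)$ is $(r_2-\lambda L)$-strongly convex in $y$. Hence the inner solutions $x(y,z,v)$, $y(x,z,v)$, $x(z,v)$, and $v(z)$ are single-valued, and $d$, $p$, $q$ admit explicit gradients by Danskin's theorem together with Lipschitz-gradient constants of order $\mathcal{O}(r_1+\lambda L+r_2)$. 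These smoothness estimates are exactly what produces the constants $\sigma$ and $L_d$ and the concrete step-size ceilings in the statement.

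\textbf{Primal block.} For the $F-d$ term I would first apply a projected-gradient descent inequality to the strongly convex function $F(\cdot,y^t,z^t,v^t)$ at $x^t$. With $c$ small enough (roughly $c\lesssim 1/(L+r_1)$) this yields a descent in $F$ proportional to $\|x^{t+1}-x^t\|^2$, together with the inexact-minimizer bound $\|x^{t+1}-x(y^t,z^t,v^t)\|\leq \sigma\|x^{t+1}-x^t\|$, which explains the form $\sigma=(2cr_1+1)/(c(r_1-L))$. The envelope representation $\nabla d(y,z,v)=\nabla F(x(y,z,v),y,z,v)$ then lets me transport $d$ across one iterate by a Lipschitz-gradient argument, producing cross terms of the form $\|x^{t+1}-x(y^t,z^t,v^t)\|\cdot\|y^{t+1}-y^t\|$ and analogous pairs in $z,v$. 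These are absorbed by Young's inequality into a small fraction of $\|x^{t+1}-x^t\|^2$ (reserved from the primal descent) plus small multiples of the $y$-, $z$-, $v$-step quantities, which are themselves produced by the subsequent blocks.

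\textbf{Dual and proximal blocks.} The $p-d$ term is analyzed by viewing the $y$-update as an \emph{inexact} projected gradient ascent on $d(\cdot,z^t,v^t)$: inexact because the step uses $\nabla_y F(x^{t+1},y^t,z^t,v^t)$ rather than $\nabla_y d(y^t,z^t,v^t)=\nabla_y F(x(y^t,z^t,v^t),y^t,z^t,v^t)$, so the error is again controlled by $\|x^{t+1}-x(y^t,z^t,v^t)\|$. Strong concavity of $-F$ in $y$, $L_d$-smoothness of $\nabla_y d$, and the constraint $\alpha\leq 1/(6L_d)$ deliver the $(r_2/15)\|y^t-y_+^t(z^t,v^t)\|^2$ reserve. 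For the proximal blocks I would view $z^{t+1}=z^t+\beta(x^{t+1}-z^t)$ as an inexact gradient-descent step on $q$ via $\nabla q(z)=r_1(z-x(z,v(z)))$, and $v^{t+1}=v^t+\mu(y^{t+1}-v^t)$ as an inexact gradient-ascent step on $p(z^{t+1},\cdot)$ via $\nabla_v p(z,v)=r_2(y(x(z,v),z,v)-v)$. The gaps between $(x^{t+1},y^{t+1})$ and the true inner solutions are Lipschitz-controlled through the strong-convexity moduli $r_1-L$ and $r_2-\lambda L$, and their contribution yields the $(r_1/5\beta)\|z^{t+1}-z^t\|^2$ and $(r_2/4\mu)\|v_+^t(z^{t+1})-v^t\|^2$ reserves.

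\textbf{Main obstacle and assembly.} The hard part will be cleanly absorbing the cascade of cross terms generated when the four blocks are summed, without leaving uncontrolled residuals. The intricate ceilings on $\beta$ and $\mu$ in the statement are precisely the solution of the inequality system ``every cross term $\leq$ a small fraction of one of the reserves'', and they couple $r_1$, $r_2$, $\lambda$, $L$, and $\alpha$ simultaneously because cross terms jump between all four blocks. A single unavoidable residue survives: when the $z$-block uses the actual $v^t$-iterate inside $\nabla q$ in place of the exact inner maximizer $v(z^{t+1})$, a term proportional to $\beta\|x(z^{t+1},v(z^{t+1}))-x(z^{t+1},v_+^t(z^{t+1}))\|^2$ is produced, which is exactly the negative correction in~\eqref{eq:suff_dec}. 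Carrying this term along is what distinguishes the present Lyapunov analysis from the one-sided smoothing analyses of \citep{zhang2020single,li2022nonsmooth}, and it is later controlled in the global complexity argument (beyond the scope of this proposition) by exploiting that the averaging step drives $v_+^t(z^{t+1})$ toward $v(z^{t+1})$.
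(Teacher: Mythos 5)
Your plan follows essentially the same route as the paper's proof: the paper likewise combines a primal-descent bound on $F$, a dual-ascent bound on $d$ obtained by treating the $y$-step as inexact ascent on $d(\cdot,z^t,v^t)$ with error $\|x^{t+1}-x(y^t,z^t,v^t)\|\le\sigma\|x^{t+1}-x^t\|$, and a proximal-descent bound on $q$ via the envelope representation, then absorbs the resulting cross terms by Young's inequality under the stated parameter ceilings, leaving exactly the residual $-4r_1\beta\|x(z^{t+1},v(z^{t+1}))-x(z^{t+1},v_+^t(z^{t+1}))\|^2$ from the mismatch between $v(z^{t+1})$ and the iterate. The only cosmetic difference is that in $\Phi=F-2d+2q$ the $p$ and $\underline F$ pieces cancel, so the paper needs only three descent lemmas rather than four separately bounded blocks.
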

  The lower bound of $\Phi$ by $\underline{F}$ is established by its construction, so the crux of proving subsequence convergence is to establish the decreasing property of the Lyapunov function. Although Theorem \ref{prop:suff_dec_1} quantifies the variation of the Lyapunov function values between two consecutive iterates, there is a negative error term $\|x(z^{t+1},v(z^{t+1}))-x(z^{t+1},v_+^t(z^{t+1}))\|$ that makes the decreasing property of $\Phi$ unclear. 
  
  Next, we characterize the negative error term in terms of other positive terms and then exhibit the sufficient descent property by bounding the coefficients. Conceptually, the error term is related to $\|v_{+}^t(z^{t+1})-v(z^{t+1})\|$ by the Lipschitz property of the solution mapping $x(z,\cdot)$. However, $\|v_{+}^t(z^{t+1})-v(z^{t+1})\|$ may not be a suitable surrogate since it includes the information about the optimal solution $v(z^{t+1})$. Fortunately, with the help of the global K\L\ property or concavity for the dual function (see Assumption \ref{ass:2} and \ref{ass:3}), we can bound the negative error term by $\|v_{+}^t(z^{t+1})-v(z^{t+1})\|$ (called the proximal error bound). The explicit form of this bound is provided in the following Propositions \ref{prop:NC_KL}.

	\begin{ass}[K\L{} property with exponent $\theta$ of the dual function]\label{ass:2}
		For any fixed point $x\in \X$, the problem $\max_{y \in \Y}f(x,y)$ has a nonempty solution set and a finite optimal value. Moreover, there exist $\tau>0$, $\theta \in (0,1)$ such that for any $y \in \Y$
		\[
				\left(\max_{y' \in\Y} f(x,y') -f(x,y)\right)^{\theta} \leq \frac{1}{\tau}\dist(\z,-\nabla_{y}f(x,y)+\partial\b1_\Y(y)).
		\]
		\end{ass}

  \begin{ass}[Concavity of the dual function]\label{ass:3}
 For any fixed point $x\in \X$, $f(x,\cdot)$ is concave.
 \end{ass}

	\begin{prop}[Proximal error bound]\label{prop:NC_KL} Under the setting of Proposition \ref{prop:suff_dec_1} with Assumption \ref{ass:2} or \ref{ass:3}, for any $z \in \R^n, v \in \R^d$ one has\\
{\rm (i)} {\rm K\L\ exponent $\theta \in (0,1)$:} 
 \[
		 \|x(z^{t+1},v_+^t(z^{t+1}))-x(z^{t+1},v(z^{t+1}))\|^2\leq \omega_0\|v_+^t(z^{t+1})-v^t\|^{\frac{1}{\theta}};
		\]
   {\rm (ii)} {\rm Concave:} \[
    \|x(z^{t+1},v_+^t(z^{t+1}))-x(z^{t+1},v(z^{t+1}))\|^2 \leq \omega_1\|v_+^t(z^{t+1})-v^t\|,
\]
where 
  $\omega_0 :=\frac{2}{(r_1-L)\tau}\left(\frac{r_2(1-\mu)}{\mu}+\frac{r_2^2}{r_2-\lambda L}\right)^{\frac{1}{\theta}}$ and $\omega_1 := \frac{4r_2\diam(\Y)}{r_1-L}\left(\frac{1-\mu}{\mu}+\frac{r_2}{r_2-\lambda L}\right)$. Here, $\diam(\Y)$ denotes the diameter of the set $\Y$.
	\end{prop}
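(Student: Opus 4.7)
The plan is to bound the target quantity in two stages. First, I use the $(r_1-L)$-strong convexity of $F(\cdot,y,z,v)$ in $x$ to reduce $\|x(z^{t+1},v_+^t(z^{t+1}))-x(z^{t+1},v(z^{t+1}))\|^2$ to a quantity controlled by $\|v_+^t(z^{t+1})-v(z^{t+1})\|$. Second, I leverage Assumption~\ref{ass:3} (resp.\ Assumption~\ref{ass:2}) to bound $\|v_+^t(z^{t+1})-v(z^{t+1})\|$ by $\|v_+^t(z^{t+1})-v^t\|$ raised to the appropriate power. Composing the two bounds delivers both inequalities.

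For the first stage, let $\Psi(x,z,v):=\max_{y\in\Y} F(x,y,z,v)$. Because $F(\cdot,y,z,v)$ is $(r_1-L)$-strongly convex for every $y$, the pointwise maximum $\Psi(\cdot,z,v)$ is also $(r_1-L)$-strongly convex. Writing the strong-convexity inequality at each minimizer $x_i:=x(z,v_i)$ and summing yields
\[
(r_1-L)\|x_1-x_2\|^2 \leq [\Psi(x_2,z,v_1)-\Psi(x_2,z,v_2)] + [\Psi(x_1,z,v_2)-\Psi(x_1,z,v_1)].
\]
By Danskin's theorem, $\nabla_v\Psi(x,z,v) = r_2(y^*(x,z,v)-v)$ with $y^*(x,z,v):=\argmax_{y\in\Y} F(x,y,z,v)\in\Y$. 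Expressing each bracket as a line integral along $v(s)=v_2+s(v_1-v_2)$, the $-v(s)$ contributions cancel across the two brackets, leaving $r_2\int_0^1\langle y^*(x_2,z,v(s))-y^*(x_1,z,v(s)),v_1-v_2\rangle ds$, bounded above by $r_2\diam(\Y)\|v_1-v_2\|$. This delivers $\|x(z,v_1)-x(z,v_2)\|^2 \leq \frac{r_2\diam(\Y)}{r_1-L}\|v_1-v_2\|$.

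For the second stage, first observe that, independently of Assumption~\ref{ass:2} or~\ref{ass:3}, the function $v\mapsto p(z,v)$ is $r_2$-strongly concave: each $d(y,z,\cdot)$ is $r_2$-strongly concave (coming from the $-\tfrac{r_2}{2}\|y-v\|^2$ term) and pointwise maximum preserves this modulus. Danskin again gives $\nabla_v p(z,v) = r_2(\tilde y(z,v)-v)$ where $\tilde y(z,v):=y(x(z,v),z,v)$. Setting $\bar y := y(x(z^{t+1},v^t),z^{t+1},v^t)$, the update rule $v_+^t = v^t+\mu(\bar y-v^t)$ yields $\|\bar y - v_+^t\| = \tfrac{1-\mu}{\mu}\|v_+^t-v^t\|$, and combining this with the Lipschitz continuity of $\tilde y(z,\cdot)$ (sensitivity $\tfrac{r_2}{r_2-\lambda L}$ from the $(r_2-\lambda L)$-strong concavity of $F$ in $y$) gives
\[
\|\nabla_v p(z^{t+1},v_+^t)\| \leq \Bigl(\tfrac{r_2^2}{r_2-\lambda L}+\tfrac{r_2(1-\mu)}{\mu}\Bigr)\|v_+^t-v^t\|.
\]
For part (ii), the Polyak-\L{}ojasiewicz inequality $\|v_+^t-v(z^{t+1})\|\leq\tfrac{1}{r_2}\|\nabla_v p(z^{t+1},v_+^t)\|$ from strong concavity, substituted into the stage-one bound, produces $\omega_1$. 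For part (i), after transferring Assumption~\ref{ass:2} into a K\L{} inequality for $p(z,\cdot)$ with the same exponent $\theta$, combining the K\L{} bound with the strong-concavity lower bound $\tfrac{r_2}{2}\|v-v(z)\|^2\leq p(v(z))-p(v)$ yields $\|v_+^t-v(z^{t+1})\|^2 \leq \tfrac{2}{r_2\tau^{1/\theta}}\|\nabla_v p(z^{t+1},v_+^t)\|^{1/\theta}$; composing this with the squared Lipschitz form $\|x(z,v_1)-x(z,v_2)\|^2\leq L_x^2\|v_1-v_2\|^2$ of stage one (needed here to preserve the exponent $1/\theta$) produces $\omega_0$.

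The principal obstacle is the transfer of Assumption~\ref{ass:2}, a K\L{} condition on $y\mapsto f(x,y)$ for each fixed $x$, into a K\L{} inequality for the derived function $v\mapsto p(z,v)$ with the \emph{same} exponent $\theta$. The key observation is that $v(z)$ is a fixed point of $\tilde y(z,\cdot)$ (since $\nabla_v p(z,v(z))=0$ forces $\tilde y(z,v(z))=v(z)$), so the $v$-suboptimality $p(z,v(z))-p(z,v)$ can be anchored on a $y$-suboptimality for $f(\hat x,\cdot)$ at an appropriate $\hat x$, at which Assumption~\ref{ass:2} applies directly; the added strong concavity from $-\tfrac{r_2}{2}\|y-v\|^2$ then enables converting this K\L{} bound into a PL-type inequality on $p(z,\cdot)$ with the correct exponent, after which the collection of Lipschitz and sensitivity constants produces the exact coefficients in $\omega_0$ and $\omega_1$.
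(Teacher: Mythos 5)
Your Stage 1 is fine (and is essentially the same strong-convexity-of-$h(\cdot,z,v)$ step the paper uses), but Stage 2 rests on a false claim that sinks both parts. You assert that $p(z,\cdot)=\max_{y\in\Y}d(y,z,\cdot)$ is $r_2$-strongly concave because each $d(y,z,\cdot)$ is and ``pointwise maximum preserves this modulus.'' Pointwise maximum does not preserve concavity (pointwise \emph{minimum} does): $\max\{-v^2,-(v-2)^2\}$ already fails to be concave. Structurally, $p(z,v)=\max_{y\in\Y}\tilde d(y,z)-\tfrac{r_2}{2}\|y-v\|^2$ is the negative Moreau envelope of $-\tilde d(\cdot,z)+\b1_\Y$ with parameter $1/r_2$; under Assumption \ref{ass:3} this is concave with $r_2$-Lipschitz gradient but never $r_2$-strongly concave (take $\tilde d\equiv 0$ on a box: $p(z,\cdot)$ is locally constant), and under Assumption \ref{ass:2} alone it is only weakly concave. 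Consequently the quadratic-growth inequality $\tfrac{r_2}{2}\|v-v(z)\|^2\le p(z,v(z))-p(z,v)$ and the error bound $\|v_+^t-v(z^{t+1})\|\le\tfrac{1}{r_2}\|\nabla_v p(z^{t+1},v_+^t)\|$ that you use to close parts (i) and (ii) are both unavailable, and your composition never reaches $\|v_+^t(z^{t+1})-v^t\|$. Two further problems: in part (i) you silently replace the Stage-1 H\"older bound by a squared-Lipschitz bound $\|x(z,v_1)-x(z,v_2)\|^2\le L_x^2\|v_1-v_2\|^2$ with an unjustified constant (a Lipschitz bound for $x(z,\cdot)$ does hold, with constant $\sigma_1\sigma_5$ via $x(z,v)=x(y(z,v),z,v)$ and Lemma \ref{lem:2}, but it yields a different $\omega_0$ than stated); and the transfer of the K\L{} property to the composite object $p(z,\cdot)$ with the same exponent is only sketched, not established.

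The paper's proof avoids your whole Stage 2 by never requiring an error bound on $\|v_+^t(z^{t+1})-v(z^{t+1})\|$. It sandwiches the single function gap $\max_v h(x(z^{t+1},v_+^t(z^{t+1})),z^{t+1},v)-q(z^{t+1})$: from below by $\tfrac{r_1-L}{2}\|x(z^{t+1},v_+^t(z^{t+1}))-x(z^{t+1},v(z^{t+1}))\|^2$ (your Stage 1), and from above by the K\L{} inequality (resp.\ the concavity gradient inequality) for $h(x,z,\cdot)$ --- a plain Moreau-envelope object with $x$ frozen, to which the K\L{} calculus of \citep{yu2022kurdyka} applies directly --- evaluated at $v_+^t(z^{t+1})$. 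The gradient $\nabla_v h=r_2(v_+^t(z^{t+1})-y(x(z^{t+1},v_+^t(z^{t+1})),z^{t+1},v_+^t(z^{t+1})))$ is then bounded by $r_2(\tfrac{1-\mu}{\mu}+\sigma_5)\|v_+^t(z^{t+1})-v^t\|$ using the update rule for $v_+^t$ and Lemma \ref{lem:2}(vii). If you want to salvage your outline, you would need to replace the strong-concavity step by exactly this kind of function-gap argument.
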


Armed with Theorem \ref{prop:suff_dec_1} and Proposition \ref{prop:NC_KL},
we establish the main theorem concerning the iteration complexity of DS-GDA with respect to the above-mentioned standard stationarity measure for  \eqref{eq:prob}.

\begin{thm}[Iteration complexity for nonconvex-(non)concave problems]\label{thm:6}
Under the setting of Theorem \ref{prop:suff_dec_1} and Proposition \ref{prop:NC_KL}, for any $T>0$, there exists a $t\in\{1,2,\ldots,T\}$ such that \\
{\rm (i)} {\rm K\L\ exponent $\theta \in (\frac{1}{2},1)$:} $(x^{t+1},y^{t+1})$ is an $\mO(T^{-\frac{1}{4\theta}})$-GS and $z^{t+1}$ is an $\mO(T^{-\frac{1}{4\theta}})$-OS if $\beta \leq \mO(T^{-\frac{2\theta-1}{2\theta}})$;\\
{\rm  (ii)} {\rm K\L\ exponent $\theta \in (0,\frac{1}{2}]$:} $(x^{t+1},y^{t+1})$ is an $\mO(T^{-\frac{1}{2}})$-GS and $z^{t+1}$ is an $\mO(T^{-\frac{1}{2}})$-OS if $\beta \leq \frac{ r_2}{32r_1 \mu\omega_0\left(2\diam(\Y)\right)^{\frac{1}{\theta}-2}}$;\\
{\rm (iii)} \rm{Concave:} $(x^{t+1},y^{t+1})$ is an $\mO(T^{-\frac{1}{4}})$-GS and $z^{t+1}$ is an $\mO(T^{-\frac{1}{4}})$-OS if $\beta \leq \mO(T^{-\frac{1}{2}})$.
\end{thm}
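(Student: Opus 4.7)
The plan is to combine the basic descent estimate in Proposition \ref{prop:suff_dec_1} with the proximal error bound in Proposition \ref{prop:NC_KL}, telescope the resulting inequality over $t = 0, \dots, T-1$, and finally convert the vanishing iterate-increments into bounds on the stationarity measures in Definition \ref{def:1}.

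First I would dispose of the negative error term $-4 r_1 \beta \|x(z^{t+1},v(z^{t+1})) - x(z^{t+1},v_+^t(z^{t+1}))\|^2$ on the right-hand side of \eqref{eq:suff_dec} using Proposition \ref{prop:NC_KL}. The positive term $\tfrac{r_2}{4\mu}\|v_+^t(z^{t+1})-v^t\|^2$ is the natural sink. In the K\L\ case the negative term is bounded by $C\beta\|v_+^t(z^{t+1})-v^t\|^{1/\theta}$. When $\theta\in(0,\tfrac12]$ the exponent $1/\theta\ge 2$, so I would split $\|v_+^t(z^{t+1})-v^t\|^{1/\theta} = \|v_+^t(z^{t+1})-v^t\|^{1/\theta-2}\cdot\|v_+^t(z^{t+1})-v^t\|^{2}$ and use the diameter bound $\|v_+^t(z^{t+1})-v^t\|\le 2\diam(\Y)$ on the first factor; choosing $\beta$ small enough (the explicit bound in (ii)) then lets $\tfrac{r_2}{4\mu}$ absorb the negative part with room to spare, producing a clean descent with $T$-independent positive coefficients. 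When $\theta\in(\tfrac12,1)$, $1/\theta<2$, so the same diameter trick gives $\|v_+^t(z^{t+1})-v^t\|^{1/\theta}\le (2\diam(\Y))^{1/\theta-2}\|v_+^t(z^{t+1})-v^t\|^2$ only after paying a constant, but the compensating scaling is achieved by letting $\beta = \mathcal{O}(T^{-(2\theta-1)/(2\theta)})$ as in the statement. The concave case (iii) is identical with exponent $1$ in place of $1/\theta$ and $\beta = \mathcal{O}(T^{-1/2})$.

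After Step~1 I would have a clean sufficient-descent of the form $\Phi^t-\Phi^{t+1}\ge \kappa_1\|x^{t+1}-x^t\|^2 + \kappa_2\|y^t-y_+^t(z^t,v^t)\|^2 + \kappa_3\|z^{t+1}-z^t\|^2 + \kappa_4\|v_+^t(z^{t+1})-v^t\|^2$ with explicit $\kappa_i$. Telescoping and using $\Phi^T\ge\underline F$ yields $\sum_{t=0}^{T-1}(\cdots)\le \Phi^0-\underline F$, so there exists $t^\star\le T$ with each of the four squared increments $\mathcal{O}\!\left(\tfrac{1}{\min_i\kappa_i T}\right)$. Substituting the $\beta$ scalings above gives the announced rate for the minimum increment: $\mathcal{O}(T^{-1/(2\theta)})$ in (i), $\mathcal{O}(T^{-1})$ in (ii), and $\mathcal{O}(T^{-1/2})$ in (iii) for the squared norms, hence the square roots $T^{-1/(4\theta)}$, $T^{-1/2}$, $T^{-1/4}$.

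The last step is to promote these iterate-based estimates to the stationarity measures. For the game-stationarity measure, the projection-step optimality gives $\dist(\z,\nabla_x F(x^{t+1},y^{t+1},z^{t+1},v^{t+1})+\partial \b1_\X(x^{t+1})) \lesssim \tfrac{1}{c}\|x^{t+1}-x^t\| + L(\|y^{t+1}-y^t\|+\|z^{t+1}-z^t\|+\|v^{t+1}-v^t\|)$, and then $\nabla_x F=\nabla_x f + r_1(x-z)$ plus the averaging updates $\|z^{t+1}-z^t\|=\beta\|x^{t+1}-z^t\|$, $\|v^{t+1}-v^t\|=\mu\|y^{t+1}-v^t\|$ translate everything back to the four increments controlled in Step~2; the $y$-side is analogous. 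For the optimization stationarity of $z^{t+1}$, I would use the characterization $\prox_{\max_y f(\cdot,\hat y)+\b1_\X}(z) = x(z,v(z))$ and write $z^{t+1} - x(z^{t+1},v(z^{t+1})) = \tfrac{1}{\beta}(z^t-z^{t+1}) + (x^{t+1}-x(z^{t+1},v_+^t(z^{t+1}))) + (x(z^{t+1},v_+^t(z^{t+1})) - x(z^{t+1},v(z^{t+1})))$, where the first term is controlled by the $\|z^{t+1}-z^t\|$ increment, the second by the $\|x^{t+1}-x^t\|$ increment together with Lipschitzness of $x(\cdot,\cdot)$ inherited from strong convexity of $F(\cdot,y,z,v)$ (using $r_1>L$), and the third is the very quantity Proposition \ref{prop:NC_KL} already bounds by $\|v_+^t(z^{t+1})-v^t\|^{1/(2\theta)}$ or $\|v_+^t(z^{t+1})-v^t\|^{1/2}$.

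The main obstacle is the bookkeeping in Step~1 when $1/\theta<2$ or in the concave case: the negative term is not absorbable by the positive term with a purely constant coefficient, so the $T$-dependent choice of $\beta$ must be calibrated precisely so that the resulting $\kappa_i$ shrink no faster than necessary and so that the final balance between the stationarity measure (which picks up a $1/\beta$ factor from $\|z^{t+1}-z^t\|=\beta\|x^{t+1}-z^t\|$) and the telescoped budget produces exactly the announced exponents $1/(4\theta)$ and $1/4$. Everything else is mechanical given Proposition \ref{prop:suff_dec_1} and Proposition \ref{prop:NC_KL}.
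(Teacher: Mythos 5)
Your treatment of case (ii) ($\theta\in(0,\tfrac12]$) is correct and is exactly the paper's argument: since $1/\theta\ge 2$, the factorization $\|v_+^t(z^{t+1})-v^t\|^{1/\theta}=\|v_+^t(z^{t+1})-v^t\|^{1/\theta-2}\,\|v_+^t(z^{t+1})-v^t\|^{2}$ together with the diameter bound turns the negative term into a multiple of $\|v_+^t(z^{t+1})-v^t\|^2$ that the $\tfrac{r_2}{4\mu}$ coefficient absorbs for the stated constant bound on $\beta$, and telescoping then yields $\mO(T^{-1/2})$. Your Step 3 (promoting increments to GS via the projection optimality conditions, and to OS via the decomposition of $z^{t+1}-x^*(z^{t+1})$) also matches the paper's Lemma \ref{lem:14} and the bound \eqref{eq:z_os}.

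The gap is in Step 1 for cases (i) and (iii). When $\theta\in(\tfrac12,1)$ the exponent $1/\theta-2$ is negative, so $t\mapsto t^{1/\theta-2}$ is \emph{decreasing} and the bound $\|v_+^t(z^{t+1})-v^t\|\le 2\diam(\Y)$ gives $\|v_+^t(z^{t+1})-v^t\|^{1/\theta-2}\ge(2\diam(\Y))^{1/\theta-2}$ --- the inequality you need points the wrong way. Concretely, $\|v\|^{1/\theta}/\|v\|^{2}=\|v\|^{1/\theta-2}\to\infty$ as $\|v\|\to 0$, so the negative term $4r_1\beta\,\omega_0\|v_+^t(z^{t+1})-v^t\|^{1/\theta}$ cannot be absorbed by $\tfrac{r_2}{4\mu}\|v_+^t(z^{t+1})-v^t\|^{2}$ uniformly over iterations for any choice of $\beta$, constant or $T$-dependent; the concave case fails identically with exponent $1<2$. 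The paper circumvents this with a dichotomy rather than absorption: either (Case 1) at some $t$ the negative term exceeds half of the largest positive term, in which case combining that very inequality with Proposition \ref{prop:NC_KL} forces $\|v_+^t(z^{t+1})-v^t\|\le\rho_1\beta^{\theta/(2\theta-1)}$ and hence all four increments are explicit powers of $\beta$, so that iterate is already an $\mO(\beta^{1/(4\theta-2)})$-approximate stationary point with no telescoping at all; or (Case 2) the negative term is dominated at every $t$, clean descent holds, and telescoping gives $\mO(1/\sqrt{T\beta})$. The choice $\beta=\mO(T^{-(2\theta-1)/(2\theta)})$ is calibrated to equate the rates of the two cases at $T^{-1/(4\theta)}$ --- it is not what makes absorption possible. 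Without this case split (or an alternative such as a Young-type splitting of $\beta\|v\|^{1/\theta}$ into $\varepsilon\|v\|^{2}$ plus a summable additive remainder), the ``clean sufficient descent with $T$-independent positive coefficients'' you assume in Step 2 is simply not available in cases (i) and (iii).
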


Moreover, when the problem \eqref{eq:prob} equipped with the widely fulfilled semi-algebraic structure \citep{bolte2007lojasiewicz,attouch2013convergence,coste2000introduction} and the dual function satisfies the K\L{} property with $\theta\in(0,\frac{1}{2}]$, we additionally have the following sequential convergence result of the DS-GDA.

\begin{thm}[Last-iterate convergence of DS-GDA]\label{thm:7}
Consider the setting of Theorem \ref{prop:suff_dec_1} and suppose that Assumption \ref{ass:2} holds with $\theta\in(0,\frac{1}{2}]$. Suppose that $f(\cdot,y)$ is semi-algebraic and $\X$, $\Y$ are semi-algebraic sets. Then, the sequence $\{(x^t,y^t,z^t,v^t)\}$ converges to $(x^*,y^*,z^*,v^*)$, where $(x^*,y^*)$ is a GS and $z^*$ is an OS.  
\end{thm}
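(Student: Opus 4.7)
The plan is to invoke the abstract KL-based convergence framework of Attouch--Bolte--Svaiter on the Lyapunov function $\Phi$ and the sequence $U^t \coloneqq (x^t, y^t, z^t, v^t)$. Four ingredients must be verified: a sufficient descent of $\Phi$ in terms of $\|U^{t+1} - U^t\|^2$; a relative-error bound of the form $\dist(\z, \partial \Phi(U^{t+1})) \leq C\|U^{t+1} - U^t\|$; continuity of $\Phi$ along the trajectory; and the KL property of $\Phi$ at every cluster point.

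For the sufficient descent, I would start from Proposition \ref{prop:suff_dec_1} and dispose of the negative error term via Proposition \ref{prop:NC_KL}(i): since $\theta \in (0, \tfrac{1}{2}]$ the exponent $1/\theta \geq 2$, and $\|v_+^t(z^{t+1}) - v^t\|$ is uniformly bounded by compactness of $\Y$, so the negative term is dominated by a multiple of $\|v_+^t(z^{t+1}) - v^t\|^2$ and can be absorbed into the positive $\frac{r_2}{4\mu}\|v_+^t(z^{t+1}) - v^t\|^2$ term under the $\beta$-threshold of Theorem \ref{thm:6}(ii). The residual right-hand side then controls each block increment: $\|x^{t+1} - x^t\|^2$ and $\|z^{t+1} - z^t\|^2$ directly; $\|y^{t+1} - y^t\|^2$ by combining $\|y^t - y_+^t(z^t, v^t)\|$ with the Lipschitz continuity of $y_+^t$ granted by the strong concavity from $r_2 \geq 2\lambda L$; and $\|v^{t+1} - v^t\|^2$ by relating it to $\|v_+^t(z^{t+1}) - v^t\|$ plus $O(\|y^{t+1} - y^t\|) + O(\|z^{t+1} - z^t\|)$ corrections. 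This yields $\Phi^t - \Phi^{t+1} \geq \gamma \|U^{t+1} - U^t\|^2$ for a uniform $\gamma > 0$.

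For the relative error, I would compute $\partial \Phi(U^{t+1})$ block by block. Since $r_1 > L$, $F(\cdot, y, z, v)$ is strongly convex, and Danskin's theorem gives differentiability of $d$ with closed forms such as $\nabla_z d(y, z, v) = -r_1(x(y, z, v) - z)$. Since $r_2 > \lambda L$, $d(\cdot, z, v)$ is strongly concave, so $p$ is differentiable; $p(z, \cdot)$ is in turn strongly concave (the $-\|y - v\|^2$ term dominates the inner max-min), so $v(z)$ is unique and $q$ is differentiable as well. Substituting these formulas together with the projection-gradient updates for $x^{t+1}, y^{t+1}$ and the averaging updates for $z^{t+1}, v^{t+1}$, each block of the subgradient becomes a linear combination of per-iterate differences whose coefficients depend only on $L, r_1, r_2, \alpha, c, \beta, \mu$. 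This step is the main technical hurdle, as it requires careful bookkeeping of Danskin-type derivatives and Lipschitz moduli of the inner optimizers; it is tractable here only thanks to the explicit quadratic regularizers in $F$ and the parameter choices $r_1 \geq 2L$, $r_2 \geq 2\lambda L$ that guarantee the requisite strong convexity/concavity.

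Continuity of $\Phi$ on any compact set containing the trajectory follows from Berge's maximum theorem applied to $d, p, q$, and the KL property of $\Phi$ follows from semi-algebraicity: $f(\cdot, y)$ is semi-algebraic, $\X, \Y$ are semi-algebraic sets, and the quadratic regularizers are polynomials, so $F$ is semi-algebraic; by the Tarski--Seidenberg projection theorem the value functions $d, p, q$ are semi-algebraic, and hence so is $\Phi$, which automatically satisfies the KL property at every point of its domain. Applying the Attouch--Bolte--Svaiter abstract convergence theorem then yields $\sum_{t \geq 0} \|U^{t+1} - U^t\| < \infty$, so $\{U^t\}$ is Cauchy and converges to a limit $(x^*, y^*, z^*, v^*)$. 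Finally, since the per-iteration upper bounds on the GS and OS stationarity residuals used in the proof of Theorem \ref{thm:6} scale with $\|U^{t+1} - U^t\| \to 0$, combining with $U^t \to U^*$ and the (upper semi)continuity of the residual mappings shows that $(x^*, y^*)$ is a GS and $z^*$ is an OS.
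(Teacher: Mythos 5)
Your proposal follows essentially the same route as the paper's proof: absorb the negative error term using Proposition \ref{prop:NC_KL} with $1/\theta\geq 2$ and boundedness of $\Y$ to get a homogeneous sufficient descent, establish a relative-error (``safeguard'') bound on $\dist(\z,\partial\Phi)$ via block-wise Danskin-type subdifferential computations and the Lipschitz error bounds, deduce the K\L{} property of $\Phi$ from semi-algebraicity of $F$, $d$, and $q$, invoke the Attouch et al.\ abstract convergence framework, and identify the limit as a GS/OS from the vanishing residuals. The only cosmetic difference is that the paper phrases both the descent and the safeguard inequality in terms of the surrogate quantities $\|y^t-y_+^t(z^t,v^t)\|$ and $\|v_+^t(z^{t+1})-v^t\|$ rather than the raw increments $\|U^{t+1}-U^t\|$, which is immaterial since the two are mutually controlled.
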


\subsection{Universal Results}
 For convex-nonconcave or nonconvex-nonconcave minimax problems in which the primal function satisfies the K\L{} property, analogous results to Theorem \ref{thm:6} can be established. This can be accomplished by introducing an alternative Lyapunov function  $\Psi:\R^{n} \times \R^{d}\times \R^{n} \times \R^{d} \rightarrow \R$ as follows:
  \[
  \Psi(x,y,z,v):=\underbrace{h(x,z,v)-F(x,y,z,v)}_\text{Dual ascent}+\underbrace{h(x,z,v)-p(z,v)}_\text{Primal descent}+\underbrace{p(z,v)-g(v)}_\text{Proximal descent}+\underbrace{\overline{F}-g(v)}_\text{Proximal ascent},    
  \] 
  where $h(x,z,v)\coloneqq\max\limits_{y\in \Y} F(x,y,z,v)$, $g(v)\coloneqq\min\limits_{z\in \R^n} p(z,v)$, and  $\overline{F}\coloneqq\max\limits_{v\in \R^d} g(v)$.

It is worth noting that our Lyapunov function exhibits symmetry with respect to nonconvex-(non)concave problems, since the adjustment only entails interchanging the position between $(d(y,z,v),q(z))$ and $(h(x,z,v),g(v))$. Therefore, similar convergence results as Theorem \ref{thm:6} could be derived without any effort. A more detailed proof can be found in Appendix \ref{APP:KL_NC}.  

Based on these results, we are ready to show that our DS-GDA is a universal algorithm.  By incorporating the choices of parameters, we can identify a consistent set of parameters that ensures the convergence of DS-GDA in nonconvex-nonconcave, nonconvex-concave, and convex-nonconcave problems. The universal convergence rate is stated as follows:

 \begin{thm}[Universal convergence of DS-GDA]\label{thm:universal} Without loss of generality, we consider the case where $\lambda=1$, implying $L_x=L_y=L$. For convex-nonconcave, nonconvex-concave, and nonconvex-nonconcave minimax problems that satisfy the one-sided K\L{} property, if we further set $r_1=r_2=r\geq 2 L$, DS-GDA converges provided certain parameter conditions are met:
  \[
  \begin{aligned}
       &\frac{8L}{3(r-L)^2}<c=\alpha=\min\left\{\frac{4}{3(  L+r)},\frac{1}{6  L}, \frac{1}{6L_d},\frac{1}{5\sqrt{6}  L}    \right \}<1 ,\\
       & 0< \beta=\mu\leq \min\left \{\frac{24r}{360r+5r^2 +(2   L+5r)^2},\frac{c L^2 }{9216r }, \frac{12}{12+  L^2},\frac{cL^2 }{384r}\right\}\leq \mO(T^{-\frac{1}{2}}).
  \end{aligned}
  \]
   When K\L{} exponent $\theta \in (0,\frac{1}{2}]$, we further require 
   \[
    \beta=\mu \leq \frac{1}{4\sqrt{2\max\left\{\omega_0(2\diam(\Y))^{\frac{1}{\theta}-2},\omega_2(2\diam(\X))^{\frac{1}{\theta}-2}\right\}}},
   \]
  where $\omega_0$ and $\omega_2$ are the coefficients in Propositions \ref{prop:NC_KL} and \ref{prop:KL_NC}. Then, $(x^{t+1},y^{t+1})$ is an $\mO(T^{-\frac{1}{4}})$-GS and $z^{t+1}$ is an $\mO(T^{-\frac{1}{4}})$-OS .
\end{thm}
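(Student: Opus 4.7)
The strategy is to exploit the symmetry built into DS-GDA when $\lambda=1$ and $r_1=r_2=r$. Under these choices the roles of $(x,z)$ and $(y,v)$ become fully interchangeable, so the basic descent estimate for the Lyapunov function $\Phi$ (Proposition \ref{prop:suff_dec_1}, which governs the nonconvex-(non)concave regime) and the analogous estimate for its mirror image $\Psi$ (which governs the (non)convex-nonconcave regime) become structurally identical: every upper bound on $c,\alpha,\beta,\mu$ produced by the primal analysis has an exact counterpart on the dual side, differing only by swapping $r_1\leftrightarrow r_2$. Hence it suffices to exhibit a single set of parameters $(c,\alpha,\beta,\mu)$ that satisfies both parameter sets simultaneously, and then run whichever of the two Lyapunov analyses corresponds to the side on which one-sided regularity (K\L\ exponent $\theta$ or concavity) actually holds.

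Concretely, I would specialize every bound in Proposition \ref{prop:suff_dec_1} to $\lambda=1$, $r_1=r_2=r$, so that
\[
\sigma=\frac{2cr+1}{c(r-L)}, \qquad L_d=\left(\frac{L}{r-L}+2\right)L+r.
\]
Taking the symmetric ansatz $c=\alpha$ and $\beta=\mu$, I would then verify that the displayed formula for $c=\alpha$ lies below each of the four stepsize ceilings in Proposition \ref{prop:suff_dec_1} (the factor $\sqrt{\lambda+5}=\sqrt{6}$ appearing precisely because $\lambda=1$), and similarly that the displayed cap on $\beta=\mu$ lies below each of the four extrapolation ceilings. The lower bound $c>\tfrac{8L}{3(r-L)^2}$ is exactly what makes the requirement $\alpha\leq \tfrac{2}{3\lambda L\sigma^2}$ compatible with $c=\alpha$, because it keeps $\sigma$ small enough. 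The dual-side analysis reduces to the same numerical constraints thanks to $\lambda=1$ and $r_1=r_2$, so a single parameter set works for both Lyapunov functions.

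With the parameters fixed, I would telescope the descent estimate \eqref{eq:suff_dec} over $t=0,\dots,T$. Using $\Phi\geq \underline{F}$ together with Proposition \ref{prop:NC_KL} (or its counterpart Proposition \ref{prop:KL_NC}) to absorb the negative term $\|x(z^{t+1},v(z^{t+1}))-x(z^{t+1},v^t_+(z^{t+1}))\|^2$, I obtain a summability relation of the form
\[
\sum_{t=0}^{T}\left(\|x^{t+1}-x^t\|^2+\|y^t-y^t_+(z^t,v^t)\|^2+\|z^{t+1}-z^t\|^2+\|v^t_+(z^{t+1})-v^t\|^2\right) = \mO(1).
\]
On the concave side, the residual from the proximal error bound contributes $\mO(\beta T)$, so picking $\beta=\mu=\mO(T^{-1/2})$ preserves boundedness and yields a minimum of the summands of order $\mO(T^{-1/2})$, hence $\mO(T^{-1/4})$ stationarity measures via the post-processing of Theorem \ref{thm:6}(iii). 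For the K\L\ side with $\theta\in(0,\tfrac{1}{2}]$, compactness of $\Y$ (resp.\ $\X$) upgrades the $\|\cdot\|^{1/\theta}$ bound from Proposition \ref{prop:NC_KL} to a quadratic bound with constant $\omega_0(2\diam(\Y))^{1/\theta-2}$ (resp.\ $\omega_2(2\diam(\X))^{1/\theta-2}$); the additional cap $\beta=\mu\leq \tfrac{1}{4\sqrt{2\max\{\cdot\}}}$ is exactly the threshold under which this quadratic residual gets absorbed directly into the positive descent terms, and the universal rate $\mO(T^{-1/4})$ is inherited as the slower of the two.

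The main obstacle is the bookkeeping in the middle step: one must verify that the single closed-form choice displayed in the theorem really does lie beneath every individual upper bound generated by both Proposition \ref{prop:suff_dec_1} and its mirror. The algebra is mechanical once the two parameter sets have been collapsed by symmetry, and the only nontrivial compatibility constraint between stepsize and smoothing radius is the lower bound $c>\tfrac{8L}{3(r-L)^2}$ that keeps $\sigma$ tame. Once that check is done, the passage from the telescoped inequality to the rate $\mO(T^{-1/4})$ is routine and mirrors the corresponding arguments for Theorem \ref{thm:6}(iii) and for the K\L\ case in Appendix \ref{APP:KL_NC}.
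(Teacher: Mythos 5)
Your proposal matches the paper's (largely implicit) argument: the paper likewise obtains Theorem \ref{thm:universal} by specializing the parameter ranges of Propositions \ref{prop:suff_dec_1} and \ref{prop:suff_dec_2} to $\lambda=1$, $r_1=r_2=r$, $c=\alpha$, $\beta=\mu$, observing that the two sets of conditions coincide by symmetry, and then invoking the rate analysis of Theorem \ref{thm:6} (and its mirror in Appendix \ref{APP:KL_NC}) on whichever side actually carries the one-sided regularity. Your identification of the role of the lower bound $c>\frac{8L}{3(r-L)^2}$ (keeping $\sigma$ small enough that $c\leq\frac{2}{3L\sigma^2}$ is self-consistent) and of the extra cap on $\beta=\mu$ in the K\L{} case (the threshold that turns the proximal error bound into an absorbable quadratic term) is exactly how the stated constants arise.
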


\begin{rmk}
    It is worth noting that our results are more general compared to AGP in \citep{xu2020unified}, where a unified analysis for convex-nonconcave and nonconvex-concave problems is provided. Here, our algorithm can also be applied to the nonconvex-nonconcave problem with one-sided K\L{} property. Moreover, our algorithm is universal, meaning that one single set of parameters can ensure convergence across all these scenarios. By contrast, different choices of parameters are required to guarantee optimal convergence in AGP.
\end{rmk}

\section{Empirical Validation of DS-GDA}
\subsection{Universality of DS-GDA}\label{sec:universal}
 To validate the universality of DS-GDA, we commence by providing a graphical description of feasible regions for the choice of parameters. Subsequently, employing a set of symmetric parameters, we will show that the K{\L}-nonconvex problems can converge to the desired stationary point, which supports our universality results.    

Without loss of generality, we consider the case where $\lambda=1$. 
In the pursuit of symmetric parameter selection, we initially fix $\beta=\mu=1/5000$, thus reducing the problem to determining only two remaining parameters: $c$ and $r$. We then explore their relationships by setting $r_1=r_2=t_2 L$ and $c=\alpha =1/(t_1 r)$. Restricting $0\leq t_1,t_2\leq 100$, the feasible choices of $t_1$ and $t_2$ are selected to guarantee the first four coefficients in basic descent estimate \eqref{eq:suff_dec} are positive. As visually depicted in Figure \ref{fig:universal}, it becomes apparent that a large number of choices for $r$ and $c$ are available to ensure convergence.

Next, we test our algorithm on a nonconvex-nonconcave problem that satisfies the one-sided K\L{} property. With a set of symmetric parameters, we find that DS-GDA can easily converge to the desired stationary point. We first introduce the K{\L}-nonconcave problems as follows:
\paragraph{K\L{}-Nonconcave Example} The following example satisfies two-sided K\L{} property with exponent $\frac{1}{2}$, which is studied in \citep{yang2020global}:
\begin{equation}\label{eq:KL_NC_examples}
\min_{x\in\X} \max_{y\in \Y}  x^2 + 3\sin(x)^2\sin(y)^2 - 4y^2 - 10\sin(y)^2,
\end{equation}
where $\X=\Y=\{z:-1\leq z\leq 1 \}$. The only saddle point is $u^*=[x^*;y^*]=[0;0]$.

From Figure \ref{fig:symmetric_para}, we can observe that symmetric parameter selection is effective for addressing K{\L}-nonconcave problems. To be specific, by setting $r_1=r_2=0.125$, $c=\alpha=0.04$, $\beta=\mu=0.8$, DS-GDA directly converges to the saddle point, which validates the universal results in Theorem \ref{thm:universal}.

\begin{figure}[ht]
     \centering
     \begin{subfigure}[b]{0.45\textwidth}
         \centering
         \includegraphics[width=\textwidth]{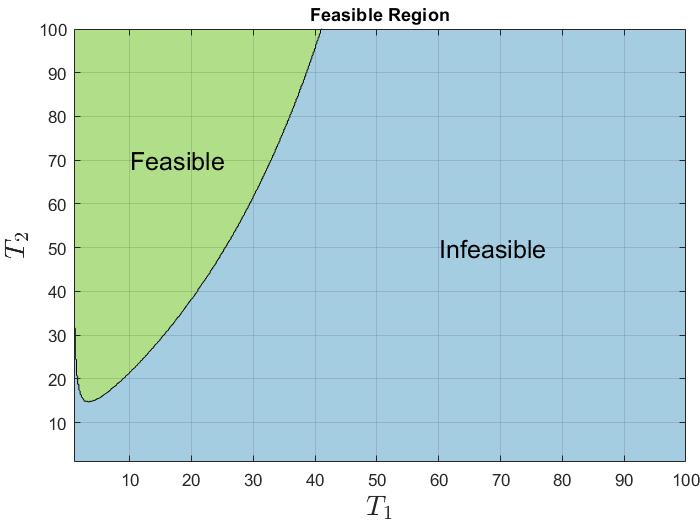}
         \caption{Feasible region of different $t_1$ and $t_2$}
        \label{fig:universal}
     \end{subfigure}
     \hfill
     \begin{subfigure}[b]{0.45\textwidth}
         \centering
         \includegraphics[width=\textwidth]{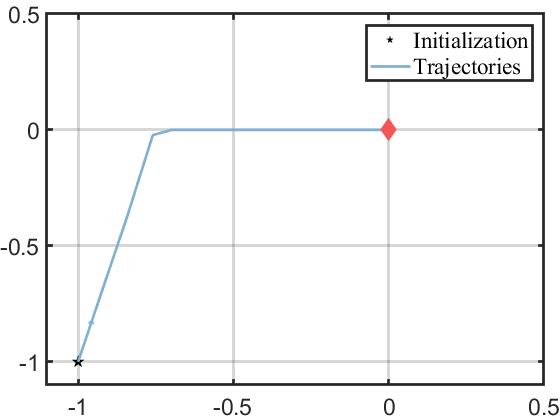}
         \caption{Convergence of K\L{}-nonconcave problem}
        \label{fig:symmetric_para}
     \end{subfigure}
    \caption{In Figure (a), the blue region indicates where convergence provably cannot be guaranteed. The green region indicates a series of parameters that can be chosen to guarantee convergence. Figure (b) demonstrates the effectiveness of symmetric parameter selection for (non)convex-(non)concave problems.}
     \vspace{-4mm}
\end{figure}

\subsection{Robustness of DS-GDA}\label{sec:robust}

In this section, we compare the convergence performance of DS-GDA with the closely related algorithm S-GDA on some illustrative examples. Additionally, we report the range of parameters for these two algorithms to demonstrate the robustness of DS-GDA.
To begin, we present some simple polynomial examples.

\paragraph{Convex-Nonconcave Example} The following example is convex-nonconcave, which is studied in \citep{chinchilla2022newton,adolphs2019local}: 
\begin{equation*}
    \min_{x\in\X} \max_{y\in \Y} 2x^2 - y^2 + 4xy + 4y^3/3-y^4/4,
\end{equation*}
where $\X=\Y=\{z:-1\leq z\leq 1\}$ and $u^*=[x^*;y^*]=[0;0]$ is the only stationary point.

\paragraph{K\L{}-Nonconcave Example} The K\L{}-nonconcave example considered here is mentioned in Section \ref{sec:universal} (see equation \eqref{eq:KL_NC_examples}). The $u^*=[x^*;y^*]=[0;0]$ is a saddle point and the only stationary point.

\paragraph{Nonconvex-Nonconcave Example} 
The nonconvex-nonconcave example considered here is the ``Bilinearly-coupled Minimax'' example \eqref{eq:bilinear-coupled} discussed in \citep{grimmer2020landscape}:
\begin{equation}\label{eq:bilinear-coupled}
     \min_{x \in \X} \max_{y \in \Y} f(x)+Axy-f(y),
\end{equation}
where $f(z) = (z+1)(z-1)(z+3)(z-3)$, $A=11$, and $\X=\Y=\{z: -4\leq z\leq 4\}$. It does not satisfy any existing regularity condition, and the point $u^*=[x^*;y^*]=[0;0]$ is the only stationary point. 

The extrapolation parameters $z$ and $v$ are initialized as $x$ and $y$, respectively. According to Lemma \ref{lem:14}, our algorithm terminates when $\|z-v\|$ and $\|y-v\|$ are less than $10^{-6}$ or when the number of iterations exceeds $10^7$. To ensure a fair comparison, we use the same initializations for DS-GDA and S-GDA in each test. The algorithm parameters are tuned so that both DS-GDA and S-GDA are optimal. In other words, they can converge to a stationary point in a minimum number of iterations. We compare the convergence of  two algorithms by plotting the iteration gap $\|u^k-u^*\|$ against the number of iterations for each example, where $u^*$ denotes the stationary point. Our results show that DS-GDA and S-GDA have similar convergence performance when the primal function is convex or satisfies the K\L{} property, as depicted in Figure \ref{fig:c_nc} and \ref{fig:kl_nc}. However, for the nonconvex-nonconcave example where no regularity condition is satisfied, DS-GDA achieves much faster convergence than S-GDA, as shown in Figure \ref{fig:nc_nc}.

To demonstrate the robustness of the proposed DS-GDA, we present  feasible regions of all common hyperparameters in Figure \ref{fig:range_para}. We tune the lower and upper bounds of each parameter while keeping other parameters fixed at their optimal values. As illustrated in Figures \ref{fig:c_nc_para} and \ref{fig:kl_nc_para}, for examples that satisfy the one-sided K\L{} property or with a convex primal function, the implementable ranges of DS-GDA and S-GDA for common parameters are roughly similar. In addition, the two auxiliary parameters for DS-GDA, i.e., $r_2$ and $\mu$, have relatively wide ranges of values, indicating that they allow for flexibility in their selection. However, for nonconvex-nonconcave problems, DS-GDA exhibits a wider range of viable parameter values compared to S-GDA. This observation highlights the robustness of DS-GDA when it comes to parameter selection (refer to Figure \ref{fig:nc_nc_para}).

\begin{figure}[ht]
     \centering
     \begin{subfigure}[b]{0.31\textwidth}
         \centering
         \includegraphics[width=\textwidth]{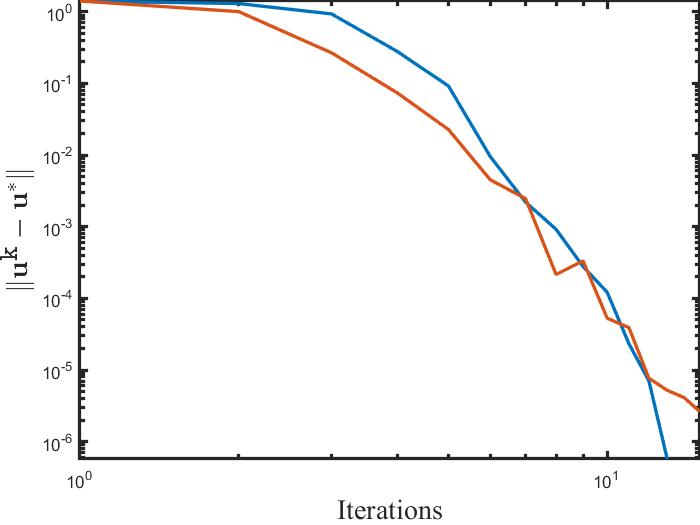}
         \caption{Convex-Nonconcave}
        \label{fig:c_nc} 
     \end{subfigure}
     \begin{subfigure}[b]{0.31\textwidth}
         \centering
         \includegraphics[width=\textwidth]{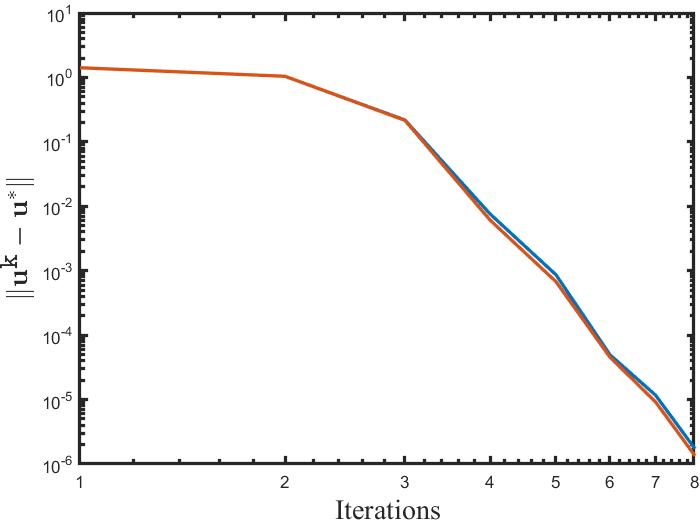}
         \caption{K\L\ -Nonconcave}
         \label{fig:kl_nc} 
     \end{subfigure}
     \begin{subfigure}[b]{0.31\textwidth}
         \centering
         \includegraphics[width=\textwidth]{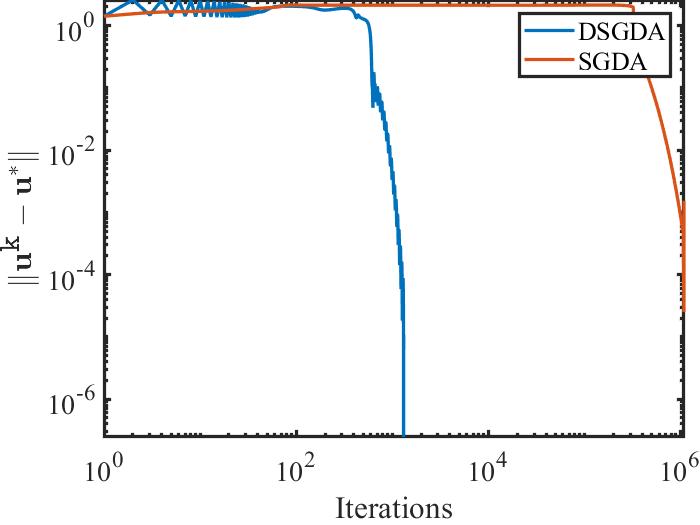}
         \caption{Nonconvex-Nonconcave}
         \label{fig:nc_nc}
     \end{subfigure}
         \label{fig:convergence_performance}
    \caption{Convergence performance of different problems.}
\end{figure}

\begin{figure}[ht]
     \centering
     \begin{subfigure}[b]{0.31\textwidth}
         \centering
         \includegraphics[width=\textwidth]{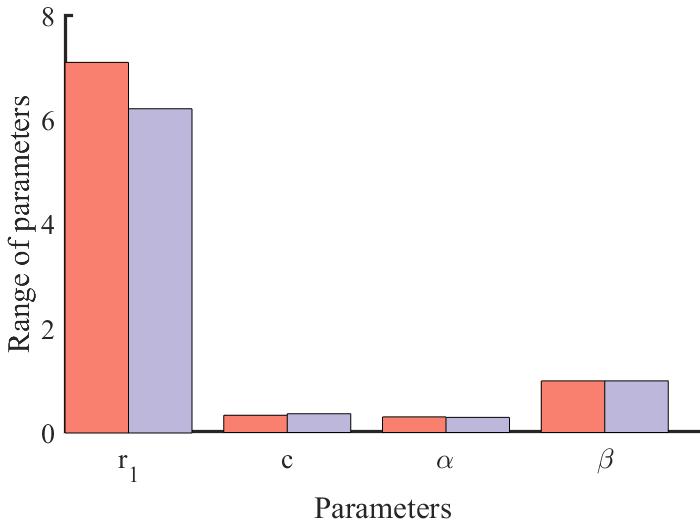}
         \caption{Convex-Nonconcave}
        \label{fig:c_nc_para}
     \end{subfigure}
     \begin{subfigure}[b]{0.31\textwidth}
         \centering
         \includegraphics[width=\textwidth]{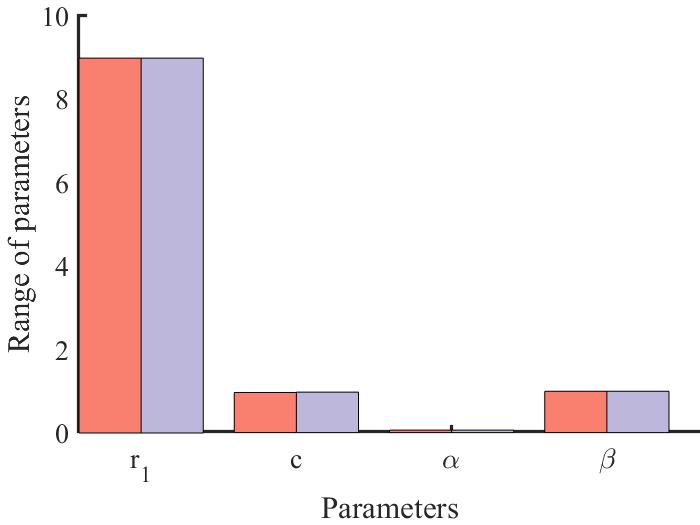}
         \caption{K\L\ -Nonconcave}
          \label{fig:kl_nc_para}
     \end{subfigure}
     \begin{subfigure}[b]{0.31\textwidth}
         \centering
         \includegraphics[width=\textwidth]{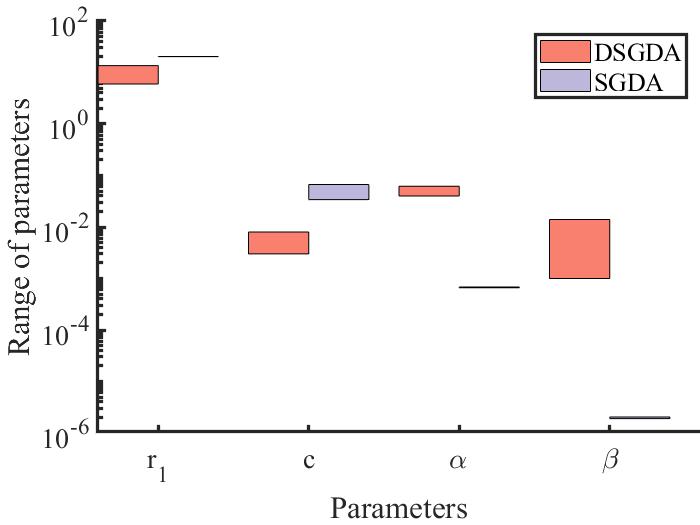}
         \caption{Nonconvex-Nonconcave}
         \label{fig:nc_nc_para} 
     \end{subfigure}
    \caption{Range of parameters for different problems.}
     \label{fig:range_para}
     \vspace{-4mm}
\end{figure}

\subsection{ Effectiveness of Getting Rid of Limit Cycle}\label{sec:behaviors}
In this section, we demonstrate the effectiveness of the proposed DS-GDA on some illustrative examples that are widely used in literature. Notably, they do not satisfy any of the regularity conditions in previous literature (i.e., K\L{} condition, weak MVI, and $\alpha$-dominant condition). We refer the readers to Appendix \ref{sec:example} for details on how to check the failure of these conditions for these examples. In addition to the violation of regularity conditions, it has been verified that none of the existing first-order algorithms can achieve convergence for all four examples. The detailed description of the four examples can be found in Appendix \ref{APP:Related}.

\begin{figure}
    \includegraphics[width=1.01\textwidth,left]{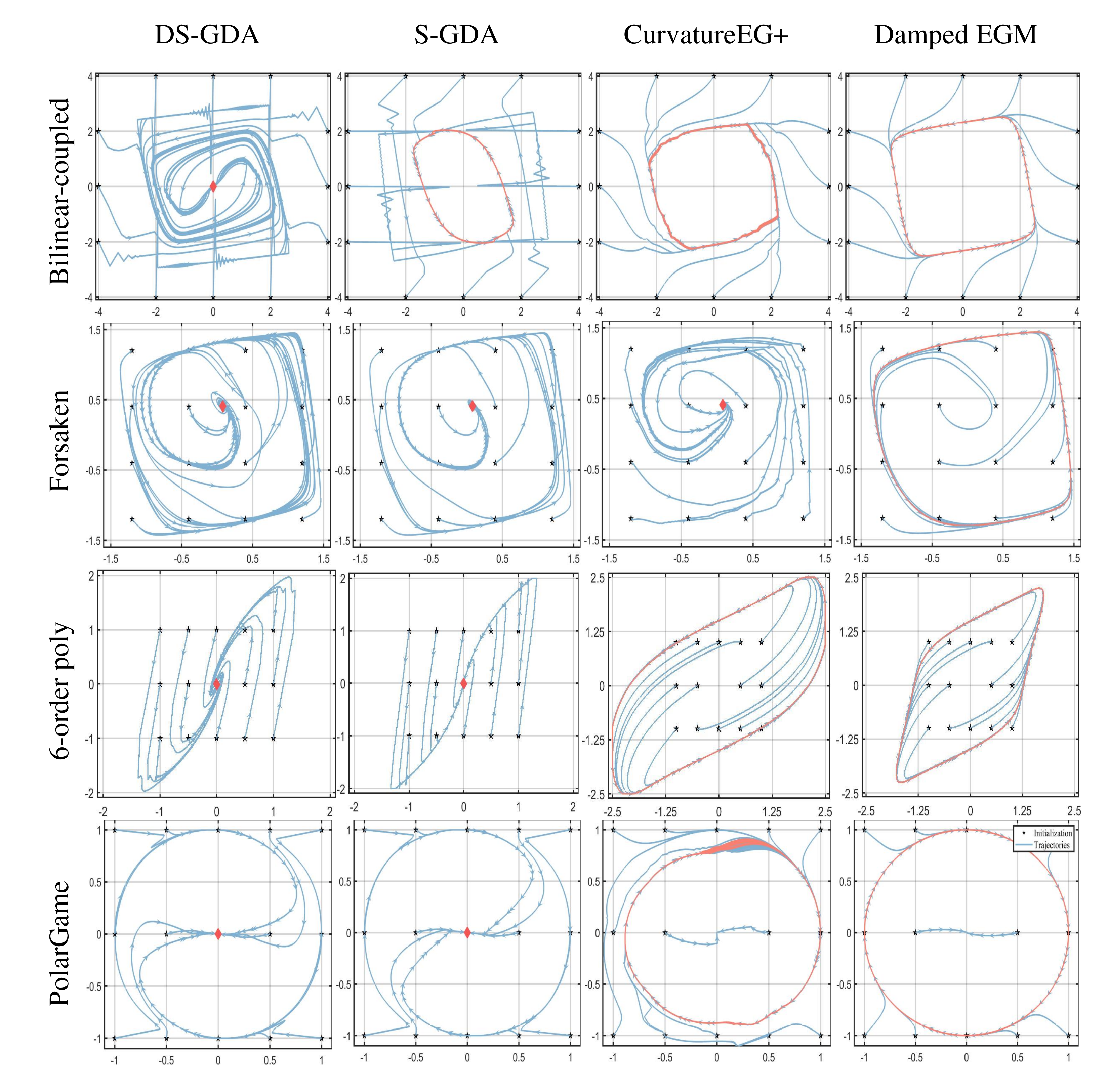}
     \caption{Trajectories of different methods with various initialization for the aforementioned four examples. The initialization of every trajectory is marked as a black star, the blue (resp. red) line represents the path (resp. limit cycle) of the methods, and the red rhombus is the stationary point.}
   \label{fig:Example}
   \vspace{-4mm}
\end{figure}
To showcase the convergence behavior and effectiveness of the proposed DS-GDA, we compare it with three other state-of-the-art methods, that is,  damped extragradient method (Damped EGM) \citep{hajizadeh2022linear},  S-GDA \citep{li2022nonsmooth,zhang2020single}, and  generalized curvature extragradient method (CurvatureEG+) \citep{pethick2022escaping}. Damped EGM is guaranteed to converge under $\alpha$-dominant condition, and  S-GDA converges when the dual function is concave.  CurvatureEG+ could converge under weak MVI condition, which is the weakest variational inequality-based condition as far as we know in the literature. 

  For the experiments, we use the same initializations of primal and dual variables for all four methods to ensure a fair comparison. For DS-GDA, the exponentially weighted variables are initialized as the same values of the primal and dual variables, respectively. We stop DS-GDA when the differences between primal-dual variables and the corresponding exponentially weighted variables are all less than $10^{-6}$ or when the number of iterations exceeds $10^5$. For other baseline methods, we terminate them when either the number of iterates reaches $10^5$ or the difference between two consecutive iterates is less than $10^{-6}$. 
 
Figure \ref{fig:Example} shows the trajectories of different methods with various initializations for the aforementioned four examples. We observe from the first column that our DS-GDA successfully gets rid of limit cycles in all examples. While S-GDA exhibits similar performance as DS-GDA, it is still not as potent in terms of its overall effectiveness. Specifically, in the case of the ``Bilinearly-coupled minimax'' example, S-GDA gets trapped in a limit cycle, while our DS-GDA successfully avoids it and achieves convergence. The figure in the second column provides more details on this comparison. With the violation of the of $\alpha$-interaction dominance condition, damped EGM either suffers from the spurious cycling convergence phenomenon or diverges to a point on the boundary (see the third row in the fourth column). It converges only when the initialization is very close to the stationary point (see the fourth row in the last column). Similar results are observed for CurvatureEG+ (see Figures in the third column for details). Thus, the proposed DS-GDA outperforms other methods. It is the only algorithm that can get rid of the limit cycle and enjoy global convergence for all these challenging examples. 

\section{Conclusion}
In this paper, we propose a single-loop algorithm called the doubly smoothed gradient descent ascent (DS-GDA) algorithm, which offers a natural balance between primal-dual updates for constrained nonconvex-nonconcave minimax problems. This is the first simple and universal algorithm for nonconvex-concave, convex-nonconcave, and nonconvex-nonconcave problems with one-sided K\L{} property. By employing a single set of parameters, DS-GDA achieves convergence with an iteration complexity of $\mathcal{O}(\epsilon^{-4})$ across all these scenarios. Sharper iteration complexity can be obtained when
 the one-sided K{\L} property is satisfied with an exponent $\theta \in (0,1)$, matching the state-of-the-art results. We further conduct experiments to validate the universality and efficiency of DS-GDA for avoiding the limit cycle phenomenon, which commonly occurs in challenging nonconvex-nonconcave examples.
 There is still a gap between theory and practice, and it would be intriguing to explore the possibility of achieving global convergence for DS-GDA without any regularity conditions. This opens up new avenues for research on nonconvex-nonconcave minimax problems.

\begin{ack}
Anthony Man-Cho So is supported in part by the Hong Kong Research Grants Council (RGC) General Research Fund (GRF) projects CUHK 14203920 and 14216122.  Jose Blanchet and Jiajin Li are supported by the Air Force Office of Scientific Research under award
number FA9550-20-1-0397 and NSF 1915967, 2118199, 2229012, 2312204.  
\end{ack}

\bibliographystyle{plain}
\bibliography{ref}

\newpage
\appendix
	\section{Organization of the Appendix}
 We organize the appendix as follows:
 \begin{itemize}
 \item The related work and the four challenging nonconvex-nonconcave examples are listed in Section \ref{APP:Related};
     \item Notations and some useful lemmas such as Lipschitz error bounds are provided in Section \ref{APP:B};
     \item  The characterization of changes in the Lyapunov function between successive iterations is established in Section \ref{APP:C};
     \item The proof of Theorem \ref{prop:suff_dec_1} is given in Section \ref{APP:D};
     \item The proof of proximal and dual error bounds are given in Section \ref{APP:E};
     \item The proof of Theorem \ref{thm:6} is given in Section \ref{APP:F};
     \item The convergence proof for (non)convex-nonconcave problem is provided in Section \ref{APP:KL_NC}.
    \item The proof of Theorem \ref{thm:7} is provided in Section \ref{APP:G}; 
     \item The quantitative relationship between different notions of the stationary point is provided in Section \ref{sec-stationary}.
     \item The properties of examples mentioned in Section \ref{sec:behaviors} are checked in Section \ref{sec:example}. We also show that the wrong selection of smoothing sides will result in slow convergence for S-GDA.
 \end{itemize}

 \section{Related Works}\label{APP:Related}
There are three representative types of regularity conditions in the literature to restrict the problem class that algorithms are developed to get rid of the limit cycle. 

\paragraph{Polyak-\L ojasiewicz (P\L{}) condition} 
The P\L{} condition \eqref{eq:pl} was originally proposed by \citep{polyak1964gradient} and is a crucial tool for establishing linear convergence of first-order algorithms for pure minimization problems~\citep{karimi2016linear}. Suppose that the problem $\max_{x\in \R^d} h(x)$ has a nonempty solution set and a finite optimal value. The P\L{} condition states that there exist a constant $\mu>0$  such that  for any  $x \in \R^d$,
\begin{equation}\label{eq:pl}
    \frac{1}{2}\|\nabla h(x)\|^2 \geq \mu\left(h(x)-\min_{x\in\R^d} h(x)\right).
\end{equation}
There is a host of  works trying to invoke the P\L{} condition  on the dual function $f(x,\cdot)$~\citep{yang2022faster,nouiehed2019solving,doan2022convergence,yang2020global}. Unfortunately, we would like to point out that this condition is too restrictive and inherently avoid the main difficulty in addressing general nonconvex-nonconcave minimax problems. With P\L{} condition imposed on the dual function, 
the inner maximization value function $\phi(\cdot)= \max_{y\in \Y}f(\cdot,y)$ is $L$-smooth~\citep[Lemma A.5]{nouiehed2019solving}.
Thus, the dual update can naturally be controlled by the primal since we can regard minimax problems as pure smooth (weakly convex) minimization problems over $x$.  However, for general cases, the inner value function $\phi$ may not even be Lipschitz. 
Recently, \citep{nouiehed2019solving} proposed a so-called multi-step GDA method with an iteration complexity of $\mO(\log(\epsilon^{-1})\epsilon^{-2})$.  \citep{doan2022convergence} further developed the single-loop two-timescale GDA method to better take the computational tractability into account and the complexity is improved to $\mO(\epsilon^{-2})$. Following the smoothing (extrapolation) technique developed in \citep{zhang2020single}, \citep{yang2022faster} extended the proposed smoothed GDA to the stochastic setting and obtained an iteration complexity of $\mO(\epsilon^{-4})$.

\paragraph{Varitional Inequality (VI)} \label{subsec:VI}
Variational inequalities can be regarded as generalizations of minimax optimization problems \citep{dem1972numerical}. In convex-concave minimax optimization, finding a saddle point is equivalent to solving the Stampacchia Variational Inequality (SVI):
\begin{equation}\label{eq:SVI}
\langle G(u^\star),u-u^\star \rangle \geq 0,\quad  \forall u \in \mU.
\end{equation}
Here $u\coloneqq [x;y]$, $u^\star$ is the optimal solution, and the operator $G$ is a gradient operator: $G(u)\coloneqq[\nabla_x f(x,y); -\nabla_y f(x,y)]$ with $\mU =\X\times \Y$. The solution of \eqref{eq:SVI} is referred to as a strong solution of the VI corresponding to $G$ and $\mU$~\citep{hartman1966some}. For the nonconvex-nonconcave minimax problem, without the monotonicity of $G$, the solution of SVI may not even exist.  One alternative condition is to assume the existence of solutions $u^\star$ for the Minty Variational Inequality (MVI):
\begin{equation}\label{eq:MVI}
\langle G(u), u-u^\star \rangle \geq 0,\quad \forall u \in \mU.
\end{equation} 
The solution of \eqref{eq:MVI} is called a weak solution of the VI~\citep{facchinei2007generalized}. In the setting where $G$ is continuous and monotone, the solution sets of \eqref{eq:SVI} and \eqref{eq:MVI} are equivalent. 
However,
these two solution sets are different in general and a weak solution may not exist when a strong solution
exists. Many works have established the convergence results under the MVI condition or its variants \citep{diakonikolas2021efficient,gorbunov2022convergence,mertikopoulos2018optimistic,liu2021first,liu2019towards,dang2015convergence,song2020optimistic,bohm2022solving,dou2021one}. Although MVI leads to convergence, it is hard to check in practice and is inapplicable to many functions (see examples in Section \ref{sec:behaviors}
). A natural question is: \textit{Can we further relax the MVI condition to ensure convergence}? One possible way is to relax the nonnegative lower bound to a negative one \citep{iusem2017extragradient,lee2021fast,cai2022accelerated,cai2022acceleratedsingle,diakonikolas2021efficient}, i.e., the so-called weak MVI condition:
\begin{equation*}
\langle G(u),u-u^\star \rangle \geq -\frac{\rho}{2}\|G(u)\|^2,\quad \forall u \in \mU.
\end{equation*}
    Here, we restricted $\rho \in [0,\frac{1}{4L})$ and \citep{diakonikolas2021efficient} proposed a Generalized extragradient method (Generalized EGM) with $\mO(\epsilon^{-2})$ iteration complexity. To include a wider function class, \citep{pethick2022escaping} enlarged the range of $\rho$ to $[0,\frac{1}{L})$ and $\rho$ can be larger if more curvature information of $f$ is involved. However, for general smooth nonconvex-nonconcave problems, various VI conditions are hard to check and $\rho$ would easily violate the constraints. In this case, the proposed CurvatureEG+ still suffers from the limit cycle issue.
\paragraph{$\alpha$-interaction dominant condition} 
Another line of work is to impose the $\alpha$-interaction dominant conditions \eqref{eq:interact_x}, \eqref{eq:interact_y} on $f$, i.e.,
\begin{subequations}
\begin{align}
        \nabla_{xx}^2 f(x,y)+\nabla_{xy}^2 f(x,y)(\eta \bI-\nabla_{yy}^2  f(x,y))^{-1} \nabla_{yx}^2  f(x,y) &\succeq \alpha \bI,  \label{eq:interact_x}\\
    -\nabla_{yy}^2  f(x,y)+\nabla_{yx}^2  f(x,y)(\eta \bI+\nabla_{xx}^2  f(x,y))^{-1} \nabla_{xy}^2  f(x,y) &\succeq \alpha \bI. \label{eq:interact_y}
\end{align}
\end{subequations}
Intuitively, this condition is to characterize how the interaction part of $f$ affects the landscape of saddle envelope $f_{\eta}(x,y)=\min_{z\in \X} \max_{v \in\Y}f(z,v)+\frac{\eta}{2}\|x-z\|^2-\frac{\eta}{2}\|y-v\|^2$ \citep{attouch1983convergence}.
We say $\alpha$ is in the interaction dominant regime if $\alpha$ in \eqref{eq:interact_x}, \eqref{eq:interact_y} is a sufficiently large positive number and in the interaction weak regime when $\alpha$ is a small but nonzero positive number. Convergence results for the damped proximal point method (Damped PPM) can only be obtained for these two regimes \citep{grimmer2020landscape}. Otherwise, the method may fall into the limit cycle or even diverge. 
Unfortunately, conditions \eqref{eq:interact_x} and \eqref{eq:interact_y} only hold with $\alpha=-L<0$ for general $L$-smooth nonconvex-nonconcave function, which will dramatically restrict the problem class. Moreover, second-order information of $f$ is required. For instance, if we choose Exponential Linear Units (ELU) with $a=1$ \citep{clevert2015fast} as the activation function in neural networks, $f$ is $L$-smooth but not twice differentiable. \citep{grimmer2020landscape} studied the convergence of Damped PPM and showed that in the interaction dominant regime, their method converges with only one-sided dominance. In the interaction weak regime, their method also enjoys a local convergence rate of $\mO(\log(\epsilon^{-1}))$. Taking computational efficiency into consideration, \citep{hajizadeh2022linear} developed the Damped EGM, which has an iteration complexity of $\mO(\log(\epsilon^{-1}))$ under two-sided dominance conditions. 

There are four representative challenging nonconvex-nonconcave examples in the literature, which have been mentioned in Section \ref{sec:behaviors}. 

\paragraph{``Bilinearly-coupled minimax'' example} The first one is the ``Bilinearly-coupled Minimax'' example \eqref{eq:bilinear-coupled} with $A=10$. It is a representative example to showcase the limit cycle phenomenon as it breaks the $\alpha$-dominant condition. When the bilinear intersection term between primal and dual variables, i.e., $x$ and $y$, is moderate, it becomes uncertain which variable, either the primal or dual, holds dominance over the other. As a result, this particular example poses a great challenge in terms of ensuring convergence.

\paragraph{``Forsaken'' example}
The second one is the ``Forsaken'' example considered in \citep[Example 5.2]{hsieh2021limits}, i.e., 
\begin{equation}\label{eq:forsaken}
     \min_{x \in \X} \max_{y \in \Y} x(y-0.45)+\phi(x)-\phi(y),
\end{equation} 
where $\phi(z)=\tfrac{1}{4}z^2-\tfrac{1}{2}z^4+\frac{1}{6}z^6$ and $\X=\Y=\{z:-1.5\leq z\leq 1.5\}$. 
    This example serves as a representative case, highlighting the limitations of min-max optimization algorithms. It demonstrates situations where standard algorithms fail to converge to the desired critical points but instead converge to spurious, non-critical points. Specifically, for problem \eqref{eq:forsaken}, two spurious limit cycles exist across the entire domain. Worse, the one closer to the optimal solution $[x^\star;y^\star]\simeq[0.08;0.4]$ is unstable, which potentially pushes the trajectories to fall into the recurrent orbit.

\paragraph{``Sixth-order polynomial'' example}
The third one is a sixth-order polynomial scaled by an exponential function. It is studied in \citep{daskalakis2022stay, wang2019solving, chinchilla2022newton}, i.e.,
\begin{equation*}
\label{eq:six-order poly}
\min_{x \in \X} \max_{y \in \Y} (4x^2-(y-3x+0.05x^3)^2-0.1y^4)\exp(-0.01(x^2+y^2)),
\end{equation*}
where $\X=\Y=\{z:-2\leq z\leq 2\}$. It is shown in \citep{wang2019solving} that existing first-order methods will suffer from limit cycles around $[x^*;y^*]=[0;0]$. As far as we know, all existing convergence results rely on second-order information. Therefore, it is intriguing to investigate the potential for ensuring convergence using first-order methods.

\paragraph{``PolarGame'' example}
The last example is constructed in \citep[Example 3]{pethick2022escaping}:
\begin{equation*}
\label{eq:polargame}
G(u) = [\nabla_x f(x,y);-\nabla_y f(x,y)]=[\Phi(x,y)-y,\Phi(y,x)+x],
\end{equation*}
where $u=[x;y]$ satisfy $u \in \X\times \Y$ with $\X=\Y=\{z:-1\leq z\leq1\}$ and $\Phi(x,y) = x(-1+x^2+y^2)(-9+16x^2+16y^2)$. It has been verified that there exist limit cycles at $\|u\|=1$ \footnote{$\|\cdot\|$ represents the $\ell_2$-norm. } and $\|u\|=\frac{3}{4}$, which definitely make the iterates hard to converge to $[x^*;y^*]=[0;0]$. To further demonstrate the effectiveness of our DS-GDA, we intentionally initialize the algorithm on the limit cycle $\|u\|=1$.

 \section{Notation and Useful Lemmas}\label{APP:B}

\begin{table}
\centering
\begin{tabular}{@{}lll@{}}
\toprule
Optimization problems                      & Function values & Optimal solutions    \\ 
\midrule
$\min\limits_{x\in \X} F(x,y,z,v)$                 & $d(y,z,v)$        & $x(y,z,v)$            \\ 
$\max\limits_{y\in \Y} F(x,y,z,v)$                  & $h(x,z,v)$        & $y(x,z,v)$             \\ 
$\min\limits_{x\in \X} \max\limits_{y\in \Y} F(x,y,z,v)$   & $p(z,v)$          &                      \\ 
$\min\limits_{x\in \X} h(x,z,v)$                    & $p(z,v)$        & $x(z,v)=x(y(z,v),z,v)$ \\ 
$\max\limits_{y\in \Y} d(y,z,v)$                    & $p(z,v)$       & $y(z,v)=y(x(z,v),z,v)$ \\ 
$\min\limits_{z\in \R^n} p(z,v)$                            & $g(v)$            & $z(v)$               \\
$\max\limits_{v\in \R^d} p(z,v)$                            & $q(z)$            & $v(z)$               \\
\bottomrule
\end{tabular}
\vspace{2mm}
\caption{Notation}
\label{table:1}
\end{table}

 We first list some useful notations in Table \ref{table:1}.
In the following parts, some technical lemmas are presented.
 Recall that $r_1>L_x$ and $r_2>L_y$.
	\begin{lemma}\label{lem:1}
		For any $x,x'\in\X$, $y,y'\in\Y$, $z\in\R^n$ and $v\in\R^d$,  it follows that 
		\[
		\begin{aligned}
			&\frac{r_1-L_x}{2}\|x-x'\|^2 \leq F(x',y,z,v)-F(x,y,z,v)-\langle \nabla_x F(x,y,z,v),x'-x\rangle \leq \frac{L_x+r_1}{2}\|x-x'\|^2,\\
			&-\frac{L_y+r_2}{2}\|y-y'\|^2 \leq F(x,y',z,v)-F(x,y,z,v)-\langle \nabla_y F(x,y,z,v),y'-y\rangle \leq \frac{L_y-r_2}{2}\|y-y'\|^2.
		\end{aligned}
		\]
	\end{lemma}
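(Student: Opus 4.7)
The plan is to decompose the regularized function $F(x,y,z,v) = f(x,y) + \tfrac{r_1}{2}\|x-z\|^2 - \tfrac{r_2}{2}\|y-v\|^2$ into its original part $f$ and its two quadratic regularizers, and then handle each piece by a separate first-order Taylor expansion with remainder. The key observation is that $\nabla_x F = \nabla_x f + r_1(x-z)$ and $\nabla_y F = \nabla_y f - r_2(y-v)$, so the $F$-linearization differs from the $f$-linearization exactly by a linear term coming from the gradient of the quadratic, and this linear term combines with the quadratic remainder to produce a \emph{clean} $\pm\tfrac{r_i}{2}\|\cdot\|^2$ contribution.

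For the first inequality, I fix $y,z,v$ and treat $F$ as a function of $x$. By Assumption~\ref{ass:1}, taking $y'=y$ yields that $\nabla_x f(\cdot,y)$ is $L_x$-Lipschitz on $\X$, so the standard descent lemma gives the two-sided bound $|f(x',y)-f(x,y)-\langle \nabla_x f(x,y),x'-x\rangle|\leq \tfrac{L_x}{2}\|x-x'\|^2$. For the regularizer, a direct algebraic expansion of $\tfrac{r_1}{2}\|x'-z\|^2$ around $x$ gives the \emph{exact} identity
\[
\tfrac{r_1}{2}\|x'-z\|^2-\tfrac{r_1}{2}\|x-z\|^2-\langle r_1(x-z),x'-x\rangle=\tfrac{r_1}{2}\|x'-x\|^2.
\]
Summing the two produces the lower bound $\tfrac{r_1-L_x}{2}\|x'-x\|^2$ and the upper bound $\tfrac{r_1+L_x}{2}\|x'-x\|^2$, as claimed.

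For the second inequality, I fix $x,z,v$ and treat $F$ as a function of $y$. The same descent lemma (now applied to $\nabla_y f(x,\cdot)$, which is $L_y$-Lipschitz by Assumption~\ref{ass:1}) controls the $f$-remainder by $\tfrac{L_y}{2}\|y-y'\|^2$ in absolute value, while the \emph{concave} quadratic $-\tfrac{r_2}{2}\|y-v\|^2$ has gradient $-r_2(y-v)$ and admits the exact identity
\[
-\tfrac{r_2}{2}\|y'-v\|^2+\tfrac{r_2}{2}\|y-v\|^2+\langle r_2(y-v),y'-y\rangle=-\tfrac{r_2}{2}\|y'-y\|^2.
\]
Adding yields upper bound $\tfrac{L_y-r_2}{2}\|y-y'\|^2$ and lower bound $-\tfrac{L_y+r_2}{2}\|y-y'\|^2$, matching the statement.

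The proof is essentially a bookkeeping exercise in applying the descent lemma twice and expanding two quadratics, so there is no genuine obstacle. The only subtlety worth flagging is a sign convention: because $r_2$ enters $F$ with a minus sign, the upper bound $\tfrac{L_y-r_2}{2}\|y-y'\|^2$ is \emph{negative} whenever $r_2>L_y$ (which is ensured later by the hypothesis $r_2>L_y$), reflecting the strong concavity of $F(x,\cdot,z,v)$; keeping the inequalities in the stated asymmetric form is what allows the lemma to be invoked cleanly in both the primal-descent and dual-ascent parts of the Lyapunov analysis.
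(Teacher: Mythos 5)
Your proposal is correct and follows essentially the same route as the paper's own proof: both apply the two-sided descent lemma to $f$ using the Lipschitz gradient assumption and then observe that the quadratic regularizers expand exactly, contributing the clean $+\tfrac{r_1}{2}\|x'-x\|^2$ and $-\tfrac{r_2}{2}\|y'-y\|^2$ terms. No gaps.
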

	\begin{proof}
		Since $f$ is $L$-smooth (from the Assumption \ref{ass:1}), we have 
		\begin{equation}\label{eq:Lsmooth}
			\begin{aligned}
				&-\frac{L_x}{2} \|x-x'\|^2 \leq f(x',y)-f(x,y)-\langle \nabla_x f(x,y),x'-x \rangle \leq \frac{L_x}{2}\|x-x'\|^2,\\
				& -\frac{L_y}{2} \|y-y'\|^2 \leq f(x,y')-f(x,y)-\langle \nabla_y f(x,y),y'-y \rangle \leq \frac{L_y}{2}\|y-y'\|^2.
			\end{aligned}
		\end{equation}
		On the other hand, we know that
		\begin{equation}\label{eq:1}
			\begin{aligned}
				& F(x',y,z,v)-F(x,y,z,v)-\langle \nabla_x F(x,y,z,v),x'-x\rangle\\
				=\ & f(x',y)-f(x,y)-\langle \nabla_x f(x,y)+r_1(x-z),x'-x\rangle+\frac{r_1}{2}\|x'-z\|^2-\frac{r_1}{2}\|x-z\|^2\\ 	
				=\ &  f(x',y)-f(x,y)-\langle \nabla_x f(x,y),x'-x\rangle +\frac{r_1}{2}\|x'-x\|^2
			\end{aligned}
		\end{equation}
		and similarly
		\begin{equation}\label{eq:2}
			\begin{aligned}
				& F(x,y',z,v)-F(x,y,z,v)-\langle \nabla_y F(x,y,z,v),y'-y\rangle\\
				=\ & f(x,y')-f(x,y)-\langle \nabla_y f(x,y)-r_2(y-v),y'-y\rangle-\frac{r_2}{2}\|y'-v\|^2 +\frac{r_2}{2}\|y-v\|^2\\ 	
				=\ & f(x,y')-f(x,y)-\langle \nabla_y f(x,y),y'-y\rangle -\frac{r_2}{2}\|y'-y\|^2.\\
			\end{aligned}
		\end{equation} 
		Combing \eqref{eq:Lsmooth}, \eqref{eq:1} and \eqref{eq:2}, we directly obtain the desired results.
	\end{proof}

	\begin{lemma}[Lipschitz type error bound conditions]\label{lem:2} 
Suppose that $r_2>(\frac{L_y}{r_1-L_x}+2)L_y$, then for any $x,x'\in\X$, $y,y'\in\Y$, $z,z'\in\R^n$ and $v,v'\in\R^d$. Then the following inequalities hold:
		\begin{itemize}
			\item [{\rm (i)}]  $\|x(y',z,v)-x(y,z,v)\|\leq \sigma_1 \|y'-y\|$,
			\item [{\rm (ii)}]
			$\|x(y,z',v)-x(y,z,v)\|\leq \sigma_2\|z-z'\|$,
			\item [{\rm (iii)}]
			$\|x(z',v)-x(z,v)\|\leq\sigma_2 \|z-z'\|$,
			\item [{\rm (iv)}]
			$\|y(z,v)-y(z',v)\|\leq \sigma_3\|z-z'\|$,
			\item [{\rm (v)}]
			$\|y(x,z,v)-y(x',z,v)\|\leq \sigma_4\|x-x'\|$,
                 \item [{\rm (vi)}]
			$\|y(x,z,v)-y(x,z,v')\|\leq \sigma_5 \|v-v'\|$,
			\item [{\rm (vii)}]
			$\|y(z,v)-y(z,v')\|\leq \sigma_5 \|v-v'\|$,
		\end{itemize}
		where $\sigma_1=\frac{L_y+r_1-L_x}{r_1-L_x}$,
		$\sigma_2 =\frac{r_1}{r_1-L_x}$,
		$\sigma_3= \frac{r_1\sigma_1}{r_2-L_y}+\frac{\sigma_2}{\sigma_1}$,
		$\sigma_4=\frac{L_x+r_2-L_y}{r_2-L_y}$,
and
	    $\sigma_5 = \frac{r_2}{r_2-L_y}$.
	\end{lemma}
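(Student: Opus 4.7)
The plan is to prove all seven estimates via a single variational-inequality template, exploiting the $(r_1-L_x)$-strong convexity of $F(\cdot,y,z,v)$ and the $(r_2-L_y)$-strong concavity of $F(x,\cdot,z,v)$ that Lemma \ref{lem:1} already guarantees (since $r_1>L_x$, $r_2>L_y$). The seven claims split naturally into two families: the single-level partial-optimizer mappings (i), (ii), (v), (vi), for which the argument is direct; and the saddle-type mappings (iii), (iv), (vii), which require passing to the Danskin-differentiable value functions $h(x,z,v)=\max_{y\in\Y}F(x,y,z,v)$ and $d(y,z,v)=\min_{x\in\X}F(x,y,z,v)$.

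For the single-level family, I would prove (i) as the prototype and the rest by symmetry. Take $x=x(y,z,v)$ and $x'=x(y',z,v)$. Writing both first-order optimality conditions as variational inequalities over $\X$ and adding yields
\[
\langle \nabla_x F(x',y',z,v)-\nabla_x F(x,y,z,v),\, x-x'\rangle \ge 0.
\]
Split the difference into a pure-$y$ piece $\nabla_x F(x',y',z,v)-\nabla_x F(x',y,z,v) = \nabla_x f(x',y')-\nabla_x f(x',y)$ (bounded in norm by $L_x\|y'-y\|$ via Assumption \ref{ass:1}) and a pure-$x$ piece, to which the strong monotonicity estimate from Lemma \ref{lem:1} contributes $-(r_1-L_x)\|x-x'\|^2$ when paired with $x-x'$. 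Rearranging gives a linear bound on $\|x-x'\|$ in $\|y-y'\|$. The same template produces (ii) (where the pure-$z$ piece equals exactly $r_1(z-z')$ because $f$ has no $z$-dependence) and, swapping the roles of $x$ and $y$, the dual estimates (v) and (vi).

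For the saddle-type family, I would first invoke Danskin's theorem: since $F(x,\cdot,z,v)$ is strongly concave with unique maximizer $y(x,z,v)$, the envelope $h(\cdot,z,v)$ is $C^1$ with $\nabla_x h(x,z,v)=\nabla_x F(x,y(x,z,v),z,v)$, and as a pointwise supremum of $(r_1-L_x)$-strongly convex functions, $h(\cdot,z,v)$ itself is $(r_1-L_x)$-strongly convex. Claim (iii) then follows by applying the single-level template to $x(z,v)=\argmin_{x\in\X}h(x,z,v)$, noting that $\nabla_y F$ is independent of $z$, so $y(x,z,v)$ is also independent of $z$, and hence the $z$-variation of $\nabla_x h$ collapses to $r_1(z'-z)$ exactly as in (ii). For (iv), I would decompose via the identity $y(z,v)=y(x(z,v),z,v)$ from Table \ref{table:1}: by the triangle inequality, $\|y(z,v)-y(z',v)\|$ is bounded by the change of $y(\cdot,z,v)$ at the two shifted arguments $x(z,v),x(z',v)$ (handled by (v)) plus a residual that is itself controlled by (iii); assembling these with the hypothesis $r_2>(L_y/(r_1-L_x)+2)L_y$ to keep constants tame yields $\sigma_3$. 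Claim (vii) is analogous, varying $v$.

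The main obstacle I expect is the bookkeeping for the composite estimate (iv): because $y(z,v)$ depends on $z$ only through $x(z,v)$, one has to chain the Lipschitz constants from (iii) and (v) without losing the strong-concavity modulus $r_2-L_y$, which is exactly the role of the extra assumption on $r_2$. A secondary subtlety is verifying that the pointwise-supremum argument really does give strong convexity of $h(\cdot,z,v)$ with the full modulus $r_1-L_x$ (this is standard but deserves a one-line check), since the whole saddle-level analysis rests on it. Everything else reduces to the same two-line variational-inequality manipulation applied with the appropriate split of the gradient difference.
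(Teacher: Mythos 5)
Your overall mechanism is the same as the paper's: every estimate comes from playing the $(r_1-L_x)$-strong convexity of $F(\cdot,y,z,v)$ (resp.\ the $(r_2-L_y)$-strong concavity of $F(x,\cdot,z,v)$, or of the value functions $h$ and $d$) against a Lipschitz perturbation of the gradient in the frozen argument. The paper writes this with the function-value inequalities of Lemma \ref{lem:1} and solves a quadratic inequality in the ratio $\xi=\|x-x'\|/\|y-y'\|$, whereas you use the summed first-order variational inequalities and strong monotonicity directly; the two are equivalent in substance, and your version is cleaner for (ii), (iii), (vi), where the perturbation is an exact quadratic and you recover $\sigma_2$ and $\sigma_5$ on the nose. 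For (i) and (v) your split routes through the \emph{other} cross-Lipschitz constant ($L_x$ rather than $L_y$ in (i)), giving $L_x/(r_1-L_x)$ instead of $\sigma_1$; under the parameter ranges in which the lemma is actually invoked ($r_1\ge 2L_x$, $r_2\ge 2L_y$) these are dominated by $\sigma_1$ and $\sigma_4$, so the stated inequalities still follow, but you should record that comparison. For (iv) you chain $y(z,v)=y(x(z,v),z,v)$ with the $z$-independence of $y(x,\cdot,v)$ (there is in fact no ``residual'' term) and obtain the constant $\sigma_2\sigma_4$; the paper instead works with the strongly concave $d(\cdot,z,v)$ and obtains $\sigma_3$. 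Neither constant dominates the other in all regimes, so your route proves a perfectly usable Lipschitz bound but not literally the one stated.

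The one genuine gap is (vii). You call it ``analogous, varying $v$,'' but the symmetry that made (iv) close is broken there: $y(x,z,v)$ is independent of $z$, whereas $x(z,v)$ \emph{does} depend on $v$ (through the inner maximization). Your decomposition therefore needs a Lipschitz bound for $v\mapsto x(z,v)$ that is not among (i)--(vii) and that you never establish; even after deriving it (Danskin on $h$ plus (vi) gives $\|x(z,v)-x(z,v')\|\le \frac{L_x\sigma_5}{r_1-L_x}\|v-v'\|$), the chained constant $\sigma_5+\sigma_4\frac{L_x\sigma_5}{r_1-L_x}$ is strictly larger than the stated $\sigma_5$. The fix is the paper's: since $d(y,z,v)=\bigl(\min_{x\in\X}f(x,y)+\frac{r_1}{2}\|x-z\|^2\bigr)-\frac{r_2}{2}\|y-v\|^2$, the $v$-dependence of $d$ is exactly the explicit quadratic, $d(\cdot,z,v)$ is $(r_2-L_y)$-strongly concave, and the two-line argument you already used for (vi), applied to $y(z,v)=\argmax_{y\in\Y}d(y,z,v)$, yields $\sigma_5$ with no extra factor.
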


	\begin{proof}
		(i) From Lemma \ref{lem:1}, we know that 
		\[   
		\begin{aligned}
			&F(x(y,z,v),y',z,v)-F(x(y',z,v),y',z,v)\geq	\frac{r_1-L_x}{2}\|x(y,z,v)-x(y',z,v)\|^2,\\
			&F(x(y,z,v),y',z,v)-F(x(y,z,v),y,z,v)\leq	\langle \nabla_y F(x(y,z,v),y,z,v),y'-y\rangle+  \frac{L_y-r_2}{2}\|y-y'\|^2,\\
			&F(x(y',z,v),y,z,v)-F(x(y',z,v),y',z,v)\leq \langle \nabla_y F(x(y',z,v),y,z,v),y-y'\rangle+ \frac{L_y+r_2}{2}\|y-y'\|^2,\\
			&F(x(y,z,v),y,z,v)-F(x(y',z,v),y,z,v) \leq  \frac{L_x-r_1}{2}\|x(y,z,v)-x(y',z,v)\|^2.
		\end{aligned}      
		\] 
		Combining above inequalities, one has that
		\[
		\begin{aligned}
			&(r_1-L_x)\|x(y,z,v)-x(y',z,v)\|^2 \\ 
			\leq\ & \langle \nabla_y F(x(y,z,v),y,z,v)-\nabla_y F(x(y',z,v),y,z,v),y'-y\rangle + L_y\|y-y'\|^2\\
			\leq\ & L_y\|x(y',z,v)-x(y,z,v)\|\|y'-y\|+L_y\|y-y'\|^2,
		\end{aligned}
		\]
		where the second inequality is from Cauchy-Schwarz inequality and $L$-smooth property. Let $\xi \coloneqq \|x(y',z,v)-x(y,z,v)\|/\|y-y'\|$. Then it follows that
		\[
		\xi^2 \leq \frac{L_y}{r_1-L_x}+\frac{L_y}{r_1-L_x} \xi.
		\]
		Consequently, utilizing AM-GM inequality we derive $\xi \leq \frac{\sqrt{L_y^2+2r_1 L_y-2L_x L_y}}{{r_1-L_x}} \leq \frac{L_y+r_1-L_x}{r_1-L_x}\coloneqq\sigma_1$.\\
		(ii-iii) Again from Lemma \ref{lem:1}, we know that 
		\begin{equation}\label{eq:19}
		    \begin{aligned}
			&F(x(y,z,v),y,z',v)-F(x(y,z',v),y,z',v)\geq	\frac{r_1-L_x}{2}\|x(y,z,v)-x(y,z',v)\|^2,\\
			&F(x(y,z,v),y,z,v)-F(x(y,z',v),y,z,v)\leq  \frac{L_x-r_1}{2}\|x(y,z,v)-x(y,z',v)\|^2,\\
   \end{aligned}
		\end{equation}   
  From the definition of $F$, we know that 
  \begin{equation}\label{eq:20}
  \begin{aligned}
   &F(x(y,z,v),y,z',v)-F(x(y,z,v),y,z,v)= \frac{r_1}{2}\langle z'+z-2x(y,z,v),z'-z\rangle,\\
			&F(x(y,z',v),y,z,v)-F(x(y,z',v),y,z',v) = \frac{r_1}{2}\langle z+z'-2x(y,z',v),z-z'\rangle.
		\end{aligned}      
		\end{equation}
		Incorporating \eqref{eq:19}, \eqref{eq:20} and using the Cauchy-Schwarz inequality, we have
		\[
		\begin{aligned}
			(r_1-L_x)\|x(y,z,v)-x(y,z',v)\|^2 &\leq r_1\langle x(y,z',v)-x(y,z,v),z'-z\rangle\\
			&\leq r_1\|x(y,z',v)-x(y,z,v)\|\|z'-z\|,
		\end{aligned}
		\]
		which completes the proof of (ii). Moreover, since $\max_{y \in \Y}F(\cdot,y,\cdot,\cdot)$ is $(r_1-L_x)$-strongly convex in $x$, the similar argument leads to (iii).\\
	(iv) We will now proceed to prove inequality (iv). From Lemma \ref{lem:1}, we know that
	   \[
		\begin{aligned}
			&d(y(z,v),z,v)-d(y(z',v),z,v)
			\geq \frac{r_2-L_y}{2}\|y(z,v)-y(z',v)\|^2,\\
			& d(y(z,v),z',v)-d(y(z',v),z',v)\leq \frac{L_y-r_2}{2}\|y(z,v)-y(z',v)\|^2.
		\end{aligned}
		\]
  On the other hand, we have 
  \[
  \begin{aligned}
      &d(y(z,v),z,v)-d(y(z,v),z',v)\\
   \leq\ & F(x(y(z,v),z',v),y(z,v),z,v)-F(x(y(z,v),z',v),y(z,v),z',v)\\
   =\ & \frac{r_1}{2}\langle z+z'-2x(y(z,v),z',v),z-z'  \rangle \\
  \end{aligned}
  \]
  and 
  \[
  \begin{aligned}
    &d(y(z',v),z',v)- d(y(z',v),z,v)\\
  \leq\ & F(x(y(z',v),z,v),y(z',v),z',v)-F(x(y(z',v),z,v),y(z',v),z,v)\\
   =\ & \frac{r_1}{2}\langle z'+z-2x(y(z',v),z,v),z'-z \rangle. \\
  \end{aligned}
  \]
		Armed with these inequalities, we conclude that
		\[
		\begin{aligned}
			(r_2-L_y)\|y(z,v)-y(z',v)\|^2& \leq
			r_1\langle x(y(z',v),z,v)-x(y(z,v),z',v) ,z-z'\rangle\\
   &\leq r_1\|z-z'\|\left(\sigma_1\|y(z,v)-y(z',v)\|+\sigma_2\|z-z'\|\right)\\
   &\leq r_1\sigma_1\|z-z'\|\|y(z,v)-y(z',v)\|+r_1\sigma_2\|z-z'\|^2,
		\end{aligned}
		\]
		where the last inequality is from error bounds (i) and (ii). 
		Let $\xi\coloneqq\frac{\|y(z,v)-y(z',v)\|}{\|z-z'\|}$, we have $\xi^2\leq \frac{r_1\sigma_1}{r_2-L_y} \xi+\frac{r_1\sigma_2}{r_2-L_y}$. Using AM-GM inequality, we obtain 
		\[
			\frac{\|y(z,v)-y(z',v)\|}{\|z-z'\|}\leq \frac{r_1\sigma_1}{r_2-L_y}+\frac{\sigma_2}{\sigma_1}\coloneqq\sigma_3.
		\]
		Next, we consider (v). Still from Lemma \ref{lem:1}, we have the following inequalities:
		\[   
		\begin{aligned}
			&F(x,y(x,z,v),z,v)-F(x,y(x',z,v),z,v)\geq\ 	\frac{r_2-L_y}{2}\|y(x,z,v)-y(x',z,v)\|^2,\\
			&F(x',y(x,z,v),z,v)-F(x',y(x',z,v),z,v)\leq\  \frac{L_y-r_2}{2}\|y(x,z,v)-y(x',z,v)\|^2,\\
			&F(x,y(x,z,v),z,v)-F(x',y(x,z,v),z,v) \leq\  \langle \nabla_x F(x',y(x,z,v),z,v),x-x'\rangle+\frac{L_x+r_1}{2}\|x-x'\|^2,\\
			&F(x',y(x',z,v),z,v)-F(x,y(x',z,v),z,v) \leq\  \langle \nabla_x F(x',y(x',z,v),z,v),x'-x\rangle +\frac{L_x-r_1}{2}\|x-x'\|^2.
		\end{aligned}      
		\] 
		Summing them up, we derive that 
		\[
			(r_2-L_y)\|y(x,z,v)-y(x',z,v)\|^2 \leq  L_x\|x-x'\|^2+L_x\|x-x'\|\|y(x,z,v)-y(x',z,v)\|.
		\]
		Let $\xi\coloneqq \frac{\|y(x,z,v)-y(x',z,v)\|}{\|x-x'\|}$. Then, we have 
		$
		\xi^2 \leq \frac{L_x}{r_2-L_y}+ \frac{L_x}{r_2-L_y}\xi
		$ 
		and consequently $\xi \leq \frac{L_x+r_2-L_y}{r_2-L_y} =\sigma_{4}$.\\	
    (vi-viii)  From the definition of $F$, we get
    \[
    \begin{aligned}
        &F(x,y(x,z,v),z,v)-F(x,y(x,z,v),z,v')
			= \frac{r_2}{2}\langle v'+v-2y(x,z,v),v'-v \rangle,\\
	&F(x,y(x,z,v'),z,v')-F(x,y(x,z,v'),z,v)
			= \frac{r_2}{2}\langle v+v'-2y(x,z,v'),v-v' \rangle.\\
    \end{aligned}
    \]
   Moreover, by the strong concavity of $F(x,\cdot,z,v)$, we have
  	\[
		\begin{aligned}
			&F(x,y(x,z,v),z,v)-F(x,y(x,z,v'),z,v)
			\geq \frac{r_2-L_y}{2}\|y(x,z,v)-y(x,z,v')\|^2,\\
			&F(x,y(x,z,v),z,v')- F(x,y(x,z,v'),z,v')\leq  \frac{L_y-r_2}{2}\|y(x,z,v)-y(x,z,v')\|^2.\\
		\end{aligned}
		\]
		Armed with these inequalities and Cauchy-Schwarz inequality, we conclude that
		\[
		\begin{aligned}
			(r_2-L_y)\|y(x,z,v)-y(x,z,v')\|^2 
			\leq\ & r_2 \langle y(x,z,v)'-y(x,z,v),v'-v\rangle\\
			\leq\ & r_2\|y(x,z,v')-y(x,z,v)\|\|v-v'\|,
		\end{aligned}
		\]
		which gives the inequality (vi). 
  Since $d(\cdot,z,v)=\min_{x\in \X}F(x,\cdot,z,v)$ is $(r_2-L_y)$-strongly concave, similarly we can derive the Lipschitz property of $y(z,v)$, as shown in (vii).
	\end{proof}

	\begin{lemma}[$L$-smooth property of dual function]\label{lem:3} 
	For any fixed $z \in \R^n$, $v \in \R^d$, the dual function $d(\cdot,z,v)$ is continuously differentiable with the gradient $\nabla_{y} d(y,z,v)=\nabla_{y} F(x(y,z,v),y,z,v)$ and 
		\[
		\|\nabla_{y} d(y,z,v)-\nabla_{y} d(y',z,v)\|\leq L_d\|y-y'\|,
		\]
		where $L_d\coloneqq L_y\sigma_1+L_y+r_2$.
	\end{lemma}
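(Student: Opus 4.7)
The plan is to combine Danskin's theorem with the Lipschitz error bound from Lemma \ref{lem:2}(i). First I would invoke the fact, established via Lemma \ref{lem:1}, that $F(\cdot, y, z, v)$ is $(r_1-L_x)$-strongly convex on $\X$, so the minimizer $x(y,z,v)$ is unique and varies continuously in $y$. Under this uniqueness, Danskin's theorem (equivalently, the envelope theorem for strongly convex inner problems) yields the gradient formula
\[
\nabla_y d(y,z,v) \;=\; \nabla_y F(x(y,z,v),y,z,v) \;=\; \nabla_y f(x(y,z,v),y) - r_2(y-v),
\]
which gives the first claim and also establishes continuous differentiability of $d(\cdot,z,v)$.

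Next, for the Lipschitz estimate I would fix $z, v$ and write, using the triangle inequality,
\[
\begin{aligned}
\|\nabla_y d(y,z,v) - \nabla_y d(y',z,v)\|
&\leq \|\nabla_y f(x(y,z,v),y) - \nabla_y f(x(y',z,v),y')\| + r_2\|y-y'\|.
\end{aligned}
\]
Applying Assumption \ref{ass:1} to bound the first term by $L_y(\|x(y,z,v)-x(y',z,v)\| + \|y-y'\|)$ and then invoking Lemma \ref{lem:2}(i) to control $\|x(y,z,v)-x(y',z,v)\| \leq \sigma_1 \|y-y'\|$ gives the bound
\[
\|\nabla_y d(y,z,v) - \nabla_y d(y',z,v)\| \leq (L_y \sigma_1 + L_y + r_2)\|y-y'\| = L_d\|y-y'\|,
\]
which is exactly the asserted Lipschitz constant.

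There is no real obstacle here; the result is a routine consequence of strong convexity of the inner minimization (to apply Danskin), the global Lipschitz gradient assumption on $f$, and the previously derived Lipschitz dependence of the inner minimizer on $y$. The only care needed is to verify the hypothesis $r_1 > L_x$ of Lemma \ref{lem:2}, which is already standing throughout the section, so that $\sigma_1$ is well-defined.
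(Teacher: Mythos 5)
Your proof is correct and follows essentially the same route as the paper: Danskin's theorem for the gradient formula, then the triangle inequality combined with Assumption \ref{ass:1} and the Lipschitz error bound of Lemma \ref{lem:2}(i) to obtain $L_d = L_y\sigma_1 + L_y + r_2$. The only cosmetic difference is that you peel off the $-r_2(y-v)$ term explicitly before applying the smoothness of $f$, whereas the paper splits via the intermediate point $\nabla_y F(x(y',z,v),y,z,v)$; both yield the identical constant.
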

	\begin{proof}
		Using Danskin's theorem, we know that $d(\cdot,z,v)$ is differentiable with $\nabla_{y} d(y,z,v)=\nabla_{y} F(x(y,z,v),y,z,v)$. Also, we know from the $L$-smooth property of $f$ that
		\[
		\begin{aligned}
			\|\nabla_{y} d(y,z,v)-\nabla_{y} d(y',z,v)\|& = \|\nabla_{y} F(x(y,z,v),y,z,v)-\nabla_{y} F(x(y',z,v),y',z,v)\|\\
			&\leq \|\nabla_{y} F(x(y,z,v),y,z,v)-\nabla_{y} F(x(y',z,v),y,z,v)\|+\\
			&\quad\ \|\nabla_{y} F(x(y',z,v),y,z,v)-\nabla_{y} F(x(y',z,v),y',z,v)\|\\
			&\leq L_y\|x(y',z,v)-x(y,z,v)\|+(L_y+r_2)\|y-y'\|\\
			&\leq  (L_y\sigma_1+L_y+r_2)\|y-y'\| =L_d \|y-y'\|,
		\end{aligned}
		\]
		where the last inequality is due to the error bound in Lemma \ref{lem:2} (i).
	\end{proof}

Recall that $y_{+}(z,v)=\proj_{\Y}(y+\alpha\nabla_{y}F(x(y,z,v),y,z,v))$. 
Incorporating the iterates of DS-GDA, we have the following error bounds:
\begin{lemma}\label{lem:5}
For any $t\ge0$, the following inequalities hold:
\begin{itemize}
			\item [{\rm (i)}]
			$\|x^{t+1}-x(y^t,z^t,v^t)\| \leq \sigma_6 \|x^{t+1}-x^t\|$,
			\item [{\rm (ii)}]
			$\|y^{t+1}-y(x^{t+1},z^t,v^t)\|\leq \sigma_7 \|y^{t+1}-y^t\|$,
		     \item [{\rm (iii)}]
			$\|y(z^t,v^t)-y^t\| \leq \sigma_8 \|y^t-y_{+}^t(z^t,v^t)\|$,
			\item [{\rm (iv)}]
			$\|y^{t+1}-y_+^t(z^t,v^{t})\|	\leq L_y\alpha\sigma_6\|x^t-x^{t+1}\|$,
		\end{itemize}
where 	$\sigma_6=\frac{2cr_1+1}{cr_1-cL_x}$,
		$\sigma_7=\frac{2\alpha r_2+1}{\alpha r_2-\alpha L_y}$, and
	    $\sigma_8=\frac{1+\alpha L_d}{\alpha(r_2-L_y)}$.
\end{lemma}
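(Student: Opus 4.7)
The four bounds are all projected-(sub)gradient error bounds against strongly convex or strongly concave functions; my plan is to prove (i) and (ii) by a single common template, deduce (iii) by recognizing the surrogate function to which the template applies, and then obtain (iv) as a one-line consequence of (i).

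For (i), set $g(x):=F(x,y^t,z^t,v^t)$, which by Lemma \ref{lem:1} is $(r_1-L_x)$-strongly convex and $(r_1+L_x)$-smooth in $x$. Both $x^{t+1}=\proj_\X(x^t-c\nabla g(x^t))$ and the minimizer $x^\star := x(y^t,z^t,v^t)$ are characterized by projection-type variational inequalities. The plan is: write the VI at $x^{t+1}$ tested with $u=x^\star$, add to it the VI at $x^\star$ tested with $u=x^{t+1}$, and then decompose $\nabla g(x^t)-\nabla g(x^\star) = [\nabla g(x^t)-\nabla g(x^{t+1})]+[\nabla g(x^{t+1})-\nabla g(x^\star)]$. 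The second bracket, paired against $x^{t+1}-x^\star$, is bounded below by $(r_1-L_x)\|x^{t+1}-x^\star\|^2$ by strong monotonicity; the first bracket and the residual inner product $\langle x^t-x^{t+1},x^\star-x^{t+1}\rangle$ are bounded above by Cauchy--Schwarz together with $(r_1+L_x)$-smoothness. This yields
\[
c(r_1-L_x)\,\|x^{t+1}-x^\star\|\;\leq\;\bigl(c(r_1+L_x)+1\bigr)\,\|x^{t+1}-x^t\|,
\]
and using $L_x\leq r_1$ to replace $c(r_1+L_x)+1$ by $2cr_1+1$ delivers the asserted $\sigma_6$. Inequality (ii) is the exact mirror: apply the same template to $-F(x^{t+1},\cdot,z^t,v^t)$, with step size $\alpha$, strong-convexity modulus $r_2-L_y$, and smoothness $r_2+L_y$, then replace $\alpha(r_2+L_y)+1$ by $2\alpha r_2+1$ to obtain $\sigma_7$.

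For (iii), the key observation is Danskin's identity $\nabla_y F(x(y^t,z^t,v^t),y^t,z^t,v^t) = \nabla_y d(y^t,z^t,v^t)$, which makes $y_+^t(z^t,v^t)$ literally the one-step projected gradient ascent on $d(\cdot,z^t,v^t)$ initialized at $y^t$. By Lemma \ref{lem:3} this function is $L_d$-smooth, and by a Lemma~\ref{lem:1}-style argument it is $(r_2-L_y)$-strongly concave (this strong concavity is already used implicitly in the proof of Lemma~\ref{lem:2}(iv)). I would then re-run the template from (i), with $g$ replaced by $-d(\cdot,z^t,v^t)$, step size $\alpha$, smoothness $L_d$, and modulus $r_2-L_y$; the analogous VI/strong-monotonicity combination produces the factor $(1+\alpha L_d)/(\alpha(r_2-L_y))=\sigma_8$. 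The main obstacle is the bookkeeping: the template naturally controls $\|y_+^t(z^t,v^t)-y(z^t,v^t)\|$ rather than $\|y(z^t,v^t)-y^t\|$, so reaching the stated left-hand side requires either a careful choice of which inner-product pairing is exposed in the VI step at $y_+^t(z^t,v^t)$ (so that $y^t-y_+^t(z^t,v^t)$ rather than $y_+^t(z^t,v^t)-y(z^t,v^t)$ appears as the error factor), or absorbing a triangle-inequality adjustment into the numerator of $\sigma_8$.

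Finally, (iv) is immediate from (i). Since $y^{t+1}$ and $y_+^t(z^t,v^t)$ are the projections onto $\Y$ of two points differing only in the $x$-argument of $\nabla_y F$, non-expansiveness of $\proj_\Y$ combined with the $L_y$-Lipschitz continuity of $\nabla_y f$ in $x$ (Assumption \ref{ass:1}) gives
\[
\|y^{t+1}-y_+^t(z^t,v^t)\|\;\leq\;\alpha\,\|\nabla_y F(x^{t+1},y^t,z^t,v^t)-\nabla_y F(x(y^t,z^t,v^t),y^t,z^t,v^t)\|\;\leq\;\alpha L_y\,\|x^{t+1}-x(y^t,z^t,v^t)\|,
\]
after which invoking (i) to replace $\|x^{t+1}-x(y^t,z^t,v^t)\|$ by $\sigma_6\|x^{t+1}-x^t\|$ yields the claim.
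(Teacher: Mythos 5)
Your proposal is correct and follows essentially the same route as the paper, which invokes the projection error bound for strongly monotone Lipschitz operators (Pang, Theorem 3.1) to get $\|x^t-x(y^t,z^t,v^t)\|\le\frac{c(r_1+L_x)+1}{c(r_1-L_x)}\|x^{t+1}-x^t\|$ and then a triangle inequality to reach $x^{t+1}$, repeats this for (ii) and for the operator $-\nabla_y d(\cdot,z^t,v^t)$ (which is $(r_2-L_y)$-strongly monotone and $L_d$-Lipschitz by Lemma \ref{lem:3}) in (iii), and proves (iv) exactly as you do. The only difference is that you derive the post-step bound on $\|x^{t+1}-x^\star\|$ directly, and the ``bookkeeping obstacle'' you flag in (iii) is not actually an issue: the pre-step form of the same VI argument (decompose the strong-monotonicity pairing around $y^t$ rather than around $y_+^t(z^t,v^t)$) yields $\|y^t-y(z^t,v^t)\|\le\frac{1+\alpha L_d}{\alpha(r_2-L_y)}\|y^t-y_+^t(z^t,v^t)\|$ with exactly the stated $\sigma_8$, no triangle-inequality adjustment needed.
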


\begin{proof}
(i-iii) First, we consider (i), which is also called ``primal error bound''. From Lemma \ref{lem:1}, we know that the mapping $\nabla_x F(\cdot,y,z,v)$ is $(r_1-L_x)$-strongly monotone and Lipschitz continuous with constant $(r_1+L_x)$ on set $\X$ for all $y\in \Y,z\in \R^n,v\in \R^d$. Adopting the proof in \citep[Theorem 3.1]{pang1987posteriori}, we can easily derive that 
		\[
		\|x^{t}-x(y^t,z^t,v^t)\| \leq \frac{cL_x+cr_1+1}{cr_1-cL_x}\|x^{t+1}-x^t\|,
		\]
		which implies that
		\[
		\|x^{t+1}-x(y^t,z^t,v^t)\| \leq \|x^{t+1}-x^t\|+\|x^{t}-x(y^t,z^t,v^t)\|  \leq \frac{2cr_1+1}{cr_1-cL_x}\|x^{t+1}-x^t\|.
		\]
		Similarly, since $-\nabla_y F(x,\cdot,z,v)$ is $(r_2-L_y)$-strongly monotone and Lipschitz continuous with constant $(r_2+L_y)$ on $\Y$ for all $x\in \X,z\in\R^n,v\in \R^d$, we can derive the ``primal error bound'' for $y_t$ in the inequality (ii). As for (iii), notice that $y(z,v)=\argmax_{y\in \Y} d(y,z,v)$, the ``primal error bound'' is defined with operator $\nabla_y d(\cdot,z,v)$. Since $d(\cdot,z,v)=\min_{x\in \X} F(x,\cdot,z,v)$ is $(r_2-L_y)$-strongly concave, the operator $-\nabla_y d(\cdot,z,v)$ is $(r_2-L_y)$-strongly monotone. Moreover, from the Lemma \ref{lem:3}, we find $\nabla_y d(\cdot,z,v)$ is $L_d$-Lipschitz continuous on $\Y$ for all  $z\in\R^n,v\in \R^d$. Thus, (iii) can be derived correspondingly. 
		
		(iv) Utilizing the inequality (i), we can further bound the desired term
		\[
		\begin{aligned}
			&\|y^{t+1}-y_+^t(z^t,v^{t})\|\\
			=\ & \|\proj_{\Y}(y^t+\alpha\nabla_{y}F(x^{t+1},y^t,z^t,v^t))-\proj_{\Y}(y^t+\alpha\nabla_{y}F(x(y^t,z^t,v^{t}),y^t,z^t,v^{t}))\|\\
			\leq\ &\alpha\|\nabla_{y}F(x^{t+1},y^t,z^t,v^t)-\nabla_{y}F(x(y^t,z^t,v^{t}),y^t,z^t,v^{t})\|\\
			\leq\ &\alpha L_y\|x^{t+1}-x(y^t,z^t,v^t)\|
			\leq L_y\alpha\sigma_6\|x^t-x^{t+1}\|,
		\end{aligned}
		\]
		where the first inequality follows from the non-expansivity of the projection operator. 
\end{proof}

\section{Basic Descent Lemmas}\label{APP:C}
	\begin{lemma}[Primal descent]\label{lem:6} For any $t \geq0$, the following inequality holds:
		\[
			\begin{aligned}
				F(x^t,y^t,z^t,v^t) \geq\ & F(x^{t+1},y^{t+1},z^{t+1},v^{t+1})+\left(\frac{1}{c}-\frac{L_x+r_1}{2}\right)\|x^{t+1}-x^t\|^2\ +\\
				&\langle \nabla_{y} F(x^{t+1},y^t,z^t,v^t),y^t-y^{t+1}\rangle+\frac{r_2-L_y}{2}\|y^{t+1}-y^t\|^2\ +\\
				&\frac{2-\beta}{2\beta}r_1\|z^{t+1}-z^t\|^2+\frac{\mu-2}{2\mu}r_2\|v^{t+1}-v^t\|^2.
			\end{aligned}
		\]
	\end{lemma}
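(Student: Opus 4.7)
\textbf{Proof proposal for Lemma \ref{lem:6}.} The plan is to telescope the change in $F$ over one iteration by sweeping through the four coordinates in the order $x\to y\to z\to v$, using the one that Algorithm \ref{alg:1} just updated. That is, I would write
\[
F(x^t,y^t,z^t,v^t)-F(x^{t+1},y^{t+1},z^{t+1},v^{t+1}) \;=\; \Delta_x+\Delta_y+\Delta_z+\Delta_v,
\]
where $\Delta_x=F(x^t,y^t,z^t,v^t)-F(x^{t+1},y^t,z^t,v^t)$, $\Delta_y=F(x^{t+1},y^t,z^t,v^t)-F(x^{t+1},y^{t+1},z^t,v^t)$, and similarly $\Delta_z,\Delta_v$ isolate the changes in $z$ and $v$.

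For $\Delta_x$, I would combine the upper quadratic bound from Lemma \ref{lem:1} with the first-order optimality condition of the projected-gradient step. Namely, apply $\langle x^{t+1}-x^t+c\nabla_xF(x^t,y^t,z^t,v^t),x^t-x^{t+1}\rangle\geq 0$ (taking $x=x^t\in\X$ in the variational inequality for the projection) to get $\langle \nabla_xF(x^t,y^t,z^t,v^t),x^{t+1}-x^t\rangle\leq -\tfrac{1}{c}\|x^{t+1}-x^t\|^2$, then plug into Lemma \ref{lem:1} to obtain $\Delta_x\geq\bigl(\tfrac{1}{c}-\tfrac{L_x+r_1}{2}\bigr)\|x^{t+1}-x^t\|^2$. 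For $\Delta_y$, I would only use the descent-type bound from Lemma \ref{lem:1} (with $r_2>L_y$ this gives a concave quadratic upper bound on $F(x^{t+1},\cdot,z^t,v^t)$), yielding $\Delta_y\geq\langle\nabla_yF(x^{t+1},y^t,z^t,v^t),y^t-y^{t+1}\rangle+\tfrac{r_2-L_y}{2}\|y^{t+1}-y^t\|^2$; crucially I deliberately keep the inner-product term rather than invoking the $y$-step optimality, since subsequent lemmas will cancel it against other error terms in the Lyapunov analysis.

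For $\Delta_z$ and $\Delta_v$, the $f$-part of $F$ drops out and only the quadratic penalties remain. Using $z^{t+1}-z^t=\beta(x^{t+1}-z^t)$, I would rewrite $x^{t+1}-z^t=\tfrac{1}{\beta}(z^{t+1}-z^t)$ and $x^{t+1}-z^{t+1}=\tfrac{1-\beta}{\beta}(z^{t+1}-z^t)$, which gives
\[
\tfrac{r_1}{2}\bigl(\|x^{t+1}-z^t\|^2-\|x^{t+1}-z^{t+1}\|^2\bigr)=\tfrac{r_1(1-(1-\beta)^2)}{2\beta^2}\|z^{t+1}-z^t\|^2=\tfrac{(2-\beta)r_1}{2\beta}\|z^{t+1}-z^t\|^2.
\]
The analogous calculation for $v$, noting the sign flip because the $v$-term enters $F$ with a minus sign, produces $\Delta_v=\tfrac{(\mu-2)r_2}{2\mu}\|v^{t+1}-v^t\|^2$. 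Summing $\Delta_x+\Delta_y+\Delta_z+\Delta_v$ gives exactly the claimed inequality.

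There is no real obstacle here — each $\Delta$ is an elementary one-variable computation — but it is worth flagging that $\Delta_v$ is genuinely \emph{negative} (since $\mu<1$ makes $\mu-2<0$), which is why the averaging-type updates require auxiliary control in the later lemmas rather than contributing a clean descent term. The only other subtle point is resisting the temptation to absorb the $\langle\nabla_yF,y^t-y^{t+1}\rangle$ term via the $y$-update optimality condition at this stage; keeping it explicit is what lets the Lyapunov analysis in Proposition \ref{prop:suff_dec_1} recombine primal, dual, and proximal pieces consistently.
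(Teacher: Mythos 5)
Your proposal is correct and follows essentially the same route as the paper: the identical four-term telescoping decomposition, the same combination of Lemma \ref{lem:1} with the projection optimality condition for the $x$-step, the same one-sided quadratic bound (keeping the inner product) for the $y$-step, and the same exact algebraic identities for the $z$- and $v$-updates. No gaps.
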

	\begin{proof}
		We firstly split the target into four parts as follows:
		\[
		\begin{aligned}
			&F(x^t,y^t,z^t,v^t)-F(x^{t+1},y^{t+1},z^{t+1},v^{t+1})\\
			=\ & \underbrace{F(x^t,y^t,z^t,v^t)-F(x^{t+1},y^t,z^t,v^t)}_\text{\lcirc{1}}
			+\underbrace{F(x^{t+1},y^t,z^t,v^t)-F(x^{t+1},y^{t+1},z^{t},v^{t})}_\text{\lcirc{2}}+\\
			&\underbrace{F(x^{t+1},y^{t+1},z^{t},v^{t})-F(x^{t+1},y^{t+1},z^{t+1},v^{t})}_\text{\lcirc{3}}+\underbrace{F(x^{t+1},y^{t+1},z^{t+1},v^{t})-F(x^{t+1},y^{t+1},z^{t+1},v^{t+1})}_\text{\lcirc{4}}.\\
		\end{aligned}
		\]
		As for \lcirc{1}, we have 
		\[
			\begin{aligned}
				F(x^t,y^t,z^t,v^t)-F(x^{t+1},y^t,z^t,v^t) &\geq \langle \nabla_{x} F(x^t,y^t,z^t,v^t),x^t-x^{t+1}\rangle-\frac{L_x+r_1}{2}\|x^{t+1}-x^t\|^2\\
				& \geq \left(\frac{1}{c}-\frac{L_x+r_1}{2}\right)\|x^{t+1}-x^t\|^2,
			\end{aligned}
		\]
		where the first inequality is from Lemma \ref{lem:1} and the second is due to the projection update of $x^{t+1}$, i.e., $\langle x^t-c\nabla_{x}F(x^t,y^t,z^t,v^t)-x^{t+1}, x^t-x^{t+1}\rangle \leq 0$. Next, from Lemma \ref{lem:1}, one has for the inequality \lcirc{2} that
		\[
			F(x^{t+1},y^t,z^{t},v^{t})-F(x^{t+1},y^{t+1},z^t,v^t) \geq \langle \nabla_{y} F(x^{t+1},y^t,z^t,v^t),y^t-y^{t+1}\rangle+\frac{r_2-L_y}{2}\|y^{t+1}-y^t\|^2.\\
		\]\\
		For \lcirc{3}, on top of the update of $z^{t+1}$, i.e, $z^{t+1}=z^t+\beta(x^{t+1}-z^t)$, we obtain 
		\[
				F(x^{t+1},y^{t+1},z^{t},v^{t})-F(x^{t+1},y^{t+1},z^{t+1},v^{t}) 
				=\frac{2-\beta}{2\beta}r_1\|z^{t+1}-z^t\|^2.
		\]\\
		Similarly, as for \lcirc{4}, following the update of $v^{t+1}$, i.e, $v^{t+1}=v^t+\mu(y^{t+1}-v^t)$, we can  verify that
		\[
				F(x^{t+1},y^{t+1},z^{t+1},v^{t})-F(x^{t+1},y^{t+1},z^{t+1},v^{t+1})
				=\frac{\mu-2}{2\mu}r_2\|v^{t+1}-v^t\|^2.
		\]
		Combining all the above bounds leads to the conclusion.
	\end{proof}	
 
	\begin{lemma}[Dual ascent]\label{lem:7} For any $t\geq0$, the following inequality holds:
		\[
			\begin{aligned}
				d(y^{t+1},z^{t+1},v^{t+1})\geq\ & d(y^t,z^t,v^t)+\frac{(2-\mu)r_2}{2\mu}\|v^{t+1}-v^t\|^2\ + \\  &\frac{r_1}{2}\langle z^{t+1}+z^t-2x(y^{t+1},z^{t+1},v^t),z^{t+1}-z^t \rangle\ +\\ & \langle \nabla_{y} F(x(y^t,z^t,v^t),y^t,z^t,v^t) ,y^{t+1}-y^t \rangle -\frac{L_d}{2}\|y^{t+1}-y^t\|^2.
			\end{aligned}
		\]
	\end{lemma}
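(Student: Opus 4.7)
The plan is to telescope the difference $d(y^{t+1},z^{t+1},v^{t+1}) - d(y^t,z^t,v^t)$ across three one-variable updates and handle each piece with a different tool. Concretely, I would write
\[
d(y^{t+1},z^{t+1},v^{t+1}) - d(y^t,z^t,v^t) = \mathrm{(A)} + \mathrm{(B)} + \mathrm{(C)},
\]
where $\mathrm{(A)} := d(y^{t+1},z^{t+1},v^{t+1}) - d(y^{t+1},z^{t+1},v^{t})$, $\mathrm{(B)} := d(y^{t+1},z^{t+1},v^{t}) - d(y^{t+1},z^{t},v^{t})$, and $\mathrm{(C)} := d(y^{t+1},z^{t},v^{t}) - d(y^{t},z^{t},v^{t})$. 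These three pieces will match, in order, the three terms appearing on the right-hand side of the claim.

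For $\mathrm{(A)}$, I would first observe that $\nabla_x F(x,y,z,v) = \nabla_x f(x,y) + r_1(x-z)$ does not involve $v$, so the inner minimizer $x(y,z,v)$ is independent of $v$. Hence $d(y,z,\cdot)$ is a pure quadratic in $v$, all of the $v$-dependence coming from the $-\tfrac{r_2}{2}\|y-v\|^2$ term, which gives
\[
\mathrm{(A)} = \tfrac{r_2}{2}\bigl(\|y^{t+1}-v^t\|^2 - \|y^{t+1}-v^{t+1}\|^2\bigr).
\]
Plugging in the update $v^{t+1}-v^t = \mu(y^{t+1}-v^t)$ (so $y^{t+1}-v^{t+1} = (1-\mu)(y^{t+1}-v^t)$ and $\|y^{t+1}-v^t\|^2 = \mu^{-2}\|v^{t+1}-v^t\|^2$) and using $1-(1-\mu)^2 = \mu(2-\mu)$ converts the right-hand side into $\tfrac{(2-\mu)r_2}{2\mu}\|v^{t+1}-v^t\|^2$, matching the first target term. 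For $\mathrm{(B)}$, I would use the defining optimality of the inner problem: $d(y^{t+1},z^{t+1},v^t) = F(x(y^{t+1},z^{t+1},v^t),y^{t+1},z^{t+1},v^t)$ exactly, while $d(y^{t+1},z^t,v^t) \le F(x(y^{t+1},z^{t+1},v^t),y^{t+1},z^t,v^t)$ by suboptimality. Subtracting, only the proximal term $\tfrac{r_1}{2}\|x-z\|^2$ contributes, and the resulting difference of squares factors via $a^2-b^2$ into exactly $\tfrac{r_1}{2}\langle z^{t+1}+z^t-2x(y^{t+1},z^{t+1},v^t),z^{t+1}-z^t\rangle$. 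For $\mathrm{(C)}$, I would invoke Lemma~\ref{lem:3}: $d(\cdot,z^t,v^t)$ is $L_d$-smooth with $\nabla_y d(y,z^t,v^t) = \nabla_y F(x(y,z^t,v^t),y,z^t,v^t)$ (Danskin), and the standard lower descent inequality for smooth functions delivers the remaining two terms of the claim.

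There is no genuinely deep obstacle here; the lemma is essentially a bookkeeping exercise that splits the three sources of change in $d$. The one point I would flag for care is the treatment of $\mathrm{(A)}$: if one naively tries to differentiate $d$ in $v$ through an envelope argument with a moving minimizer, sign errors proliferate. Observing upfront that $x(y,z,v)$ is actually $v$-free collapses $\mathrm{(A)}$ to an elementary quadratic identity and is the main step deserving explicit mention. The other mild care point is ensuring that the one-sided smoothness inequality used in $\mathrm{(C)}$ is applied in the correct direction (as a lower bound on $d(y^{t+1},z^t,v^t)$), which follows directly from $|d(y')-d(y)-\langle \nabla_y d(y,\cdot),y'-y\rangle|\le \tfrac{L_d}{2}\|y'-y\|^2$.
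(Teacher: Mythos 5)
Your proposal is correct and follows essentially the same route as the paper: the identical three-term telescoping decomposition, the exact quadratic identity for the $v$-update (justified by the $v$-independence of the inner minimizer, which the paper uses implicitly), the optimality/suboptimality comparison for the $z$-piece, and the $L_d$-smooth descent inequality from Lemma~\ref{lem:3} for the $y$-piece. No gaps.
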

	\begin{proof} The difference of the update for the dual function is controlled by the following three parts:
		\[
		\begin{aligned}
			&d(y^{t+1},z^{t+1},v^{t+1})-d(y^t,z^t,v^t)\\
			=\ & \underbrace{d(y^{t+1},z^{t+1},v^{t+1})-d(y^{t+1},z^{t+1},v^{t})}_\text{\lcirc{1}}
			+\underbrace{d(y^{t+1},z^{t+1},v^{t})-d(y^{t+1},z^{t},v^{t})}_\text{\lcirc{2}}+\\
			&\underbrace{d(y^{t+1},z^{t},v^{t})-d(y^{t},z^{t},v^{t})}_\text{\lcirc{3}}.\\
		\end{aligned}
		\]
		For the first part, following from the update of $v^{t+1}$, we have
		\[
			\begin{aligned}
				\lcirc{1} &= \frac{r_2}{2}\left(\|y^{t+1}-v^t\|^2-\|y^{t+1}-v^{t+1}\|^2\right)=\frac{(2-\mu)r_2}{2\mu}\|v^{t+1}-v^t\|^2.
			\end{aligned}
		\]
		For the second part,
		\[
			\begin{aligned}
				\lcirc{2} &= F(x(y^{t+1},z^{t+1},v^t),y^{t+1},z^{t+1},v^t)-F(x(y^{t+1},z^t,v^t),y^{t+1},z^t,v^t)\\
				&\geq F(x(y^{t+1},z^{t+1},v^t),y^{t+1},z^{t+1},v^t)-F(x(y^{t+1},z^{t+1},v^t),y^{t+1},z^t,v^t)\\
				&= \frac{r_1}{2}\langle z^{t+1}+z^t-2x(y^{t+1},z^{t+1},v^t),z^{t+1}-z^t \rangle.
			\end{aligned}
		\]
   Finally, consider the third part, from the Lemma \ref{lem:3}, we get  
		\[
			\begin{aligned}
				\lcirc{3} 
				&= \langle \nabla_{y} F(x(y^t,z^t,v^t),y^t,z^t,v^t) ,y^{t+1}-y^t \rangle -\frac{L_d}{2}\|y^{t+1}-y^t\|^2.
			\end{aligned}	
		\]
		Combining the above inequalities finishes the proof.
	\end{proof}	

\begin{lemma}[Proximal descent]\label{lem:8} For all $t \geq 0$, the following inequality holds:
		\begin{equation*}
			\begin{aligned}
				q(z^{t}) \geq  q(z^{t+1})+\frac{r_1}{2}\langle z^t+z^{t+1}-2x(z^t,v(z^{t+1})),z^t-z^{t+1}\rangle.
			\end{aligned}
		\end{equation*}
	\end{lemma}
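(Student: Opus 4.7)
The plan is to unroll the definition $q(z)=\max_{v\in\R^d} p(z,v)=\max_{v} \min_{x\in\X}\max_{y\in\Y} F(x,y,z,v)$ and isolate the only place where the variable $z$ actually enters the regularized function $F$, namely the term $\frac{r_1}{2}\|x-z\|^2$, which is independent of both $y$ and $v$.

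First, I would write
\[
q(z^t)-q(z^{t+1}) \;=\; p(z^t,v(z^t)) - p(z^{t+1},v(z^{t+1})).
\]
Since $v(z^t)$ is the maximizer of $p(z^t,\cdot)$, we have the suboptimality inequality $p(z^t,v(z^t))\geq p(z^t,v(z^{t+1}))$. So it suffices to lower bound $p(z^t,v(z^{t+1}))-p(z^{t+1},v(z^{t+1}))$. Let me abbreviate $\bar v := v(z^{t+1})$ and set $\bar x := x(z^t,\bar v)=\argmin_{x\in\X}\max_{y\in\Y}F(x,y,z^t,\bar v)$. Then by definition $p(z^t,\bar v) = \max_{y\in\Y} F(\bar x,y,z^t,\bar v)$, and since $\bar x$ is feasible but not necessarily optimal for the outer minimization at $z^{t+1}$, we get $p(z^{t+1},\bar v) \leq \max_{y\in\Y} F(\bar x,y,z^{t+1},\bar v)$.

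Next I would exploit the key algebraic fact: for fixed $\bar x,\bar v$, the difference
\[
F(\bar x,y,z^t,\bar v) - F(\bar x,y,z^{t+1},\bar v) \;=\; \frac{r_1}{2}\|\bar x-z^t\|^2 - \frac{r_1}{2}\|\bar x-z^{t+1}\|^2
\]
does not depend on $y$, because only the $x$-proximal term carries $z$. Consequently, taking the max over $y\in\Y$ on each side preserves this constant gap, and combining with the two one-sided bounds above yields
\[
q(z^t)-q(z^{t+1}) \;\geq\; \frac{r_1}{2}\bigl(\|\bar x-z^t\|^2 - \|\bar x-z^{t+1}\|^2\bigr).
\]

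Finally, I invoke the elementary identity $\|a-b\|^2-\|a-c\|^2 = \langle b+c-2a,\,b-c\rangle$ with $a=\bar x$, $b=z^t$, $c=z^{t+1}$ to convert this into the desired form
\[
q(z^t)-q(z^{t+1}) \;\geq\; \frac{r_1}{2}\langle z^t+z^{t+1}-2x(z^t,v(z^{t+1})),\,z^t-z^{t+1}\rangle.
\]
There is no real obstacle here: the only subtlety is choosing the right reference point $\bar x = x(z^t,v(z^{t+1}))$ (rather than, say, $x(z^{t+1},v(z^{t+1}))$ or $x(z^t,v(z^t))$) so that the one-sided exchange between $\min$-$\max$ values goes in the correct direction and the $y$-max cancels cleanly. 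The structure is parallel to Lemma \ref{lem:7}'s treatment of the $\lcirc{2}$ term, just applied one layer higher in the nested optimization.
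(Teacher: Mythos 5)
Your proposal is correct and follows essentially the same route as the paper's proof: first use the optimality of $v(z^t)$ to pass to $v(z^{t+1})$, then use the feasibility of $x(z^t,v(z^{t+1}))$ in the minimization at $z^{t+1}$, and finally reduce the gap to the difference of the two quadratic proximal terms. The only cosmetic difference is that you observe the $y$-dependence cancels exactly so the inner max preserves the gap as an equality, whereas the paper achieves the same effect by evaluating both sides at the particular maximizer $y(x(z^t,v(z^{t+1})),z^{t+1},v(z^{t+1}))$.
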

	\begin{proof}
		From Sion's minimax theorem \citep{sion1958general}, we have 
		\[
		\begin{aligned}
			q(z)=&\max_v p(z,v) =\max_{v}\min_{x \in \X}\max_{y \in \Y} F(x,y,z,v)\\
			=& \max_{v} h(x(z,v),z,v)= \max_{v} F(x(y(z,v),z,v),y(z,v),z,v).
		\end{aligned}
		\]
		Thus, it follows that
		\[
		\begin{aligned}
			q(z^t)-q(z^{t+1})
			=\ & h(x(z^t,v(z^t)),z^t,v(z^t))-h(x(z^{t+1},v(z^{t+1})),z^{t+1},v(z^{t+1}))\\
   \geq\ &h(x(z^t,v(z^{t+1})),z^t,v(z^{t+1}))-h(x(z^{t+1},v(z^{t+1})),z^{t+1},v(z^{t+1}))\\
   \geq\ &h(x(z^t,v(z^{t+1})),z^t,v(z^{t+1}))-h(x(z^{t},v(z^{t+1})),z^{t+1},v(z^{t+1}))\\
   \geq\ & F(x(z^t,v(z^{t+1})),y(x(z^{t},v(z^{t+1})),z^{t+1},v(z^{t+1})),z^t,v(z^{t+1}))-\\
   &  F(x(z^{t},v(z^{t+1})),y(x(z^{t},v(z^{t+1})),z^{t+1},v(z^{t+1})),z^{t+1},v(z^{t+1}))\\
   =\ &\frac{r_1}{2}\langle z^t+z^{t+1}-2x(z^t,v(z^{t+1})),z^t-z^{t+1}\rangle.
		\end{aligned}
		\]
		The proof is complete.
	\end{proof}
\section{Proof of Theorem \ref{prop:suff_dec_1}}\label{APP:D}
	From the results in Lemmas \ref{lem:6}, \ref{lem:7}, and \ref{lem:8}, we know that
	\[
	\begin{aligned}
		\Phi(x^t,y^t,z^t,v^t) \geq\ 
		& \Phi(x^{t+1},y^{t+1},z^{t+1},v^{t+1}) +\left(\frac{1}{c}-\frac{L_x+r_1}{2}\right)\|x^{t+1}-x^t\|^2\ +\\
		& \left(\frac{r_2-L_y}{2}-L_d\right)\|y^{t+1}-y^t\|^2+\frac{(2-\beta)r_1}{2\beta}\|z^{t+1}-z^t\|^2\ +\\
		& \frac{(2-\mu)r_2}{2\mu}\|v^{t+1}-v^t\|^2 +\underbrace{\langle \nabla_{y} F(x^{t+1},y^t,z^t,v^t),y^{t+1}-y^t\rangle}_\text{\lcirc{1}} +\\
		& \underbrace{2\langle \nabla_{y} F(x(y^t,z^t,v^t),y^t,z^t,v^t)-\nabla_{y} F(x^{t+1},y^t,z^t,v^t) ,y^{t+1}-y^t \rangle}_\text{\lcirc{2}} +\\
		& \underbrace{2r_1\langle x(z^t,v(z^{t+1}))-x(y^{t+1},z^{t+1},v^t),z^{t+1}-z^t \rangle}_\text{\lcirc{3}}.
	\end{aligned}
	\]
	For the part \text{\lcirc{1}}, using the projection update of $y^{t+1}$, we have
	\begin{equation*}
		\langle \nabla_{y} F(x^{t+1},y^t,z^t,v^t),y^{t+1}-y^t\rangle \geq \frac{1}{\alpha} \|y^t-y^{t+1}\|^2.
	\end{equation*}
	The part \text{\lcirc{2}} is due to the Lipschitz gradient property (see Assumption \ref{ass:1}) and error bound (i) in Lemma \ref{lem:5}: 
	\begin{equation*}
		\begin{aligned}
			&2\langle \nabla_{y} F(x(y^t,z^t,v^t),y^t,z^t,v^t)-\nabla_{y} F(x^{t+1},y^t,z^t,v^t) ,y^{t+1}-y^t \rangle \\
			\geq& -2L_y\|x(y^t,z^t,v^t)-x^{t+1}\|\|y^{t+1}-y^t\|\\
			\geq& -L_y\sigma_6^2\|y^{t+1}-y^t\|^2 -L_y\sigma_6^{-2}\|x(y^t,z^t,v^t)-x^{t+1}\|^2\\
			\geq& -L_y\sigma_6^2\|y^{t+1}-y^t\|^2-L_y\|x^{t+1}-x^t\|^2.
		\end{aligned}
	\end{equation*}
	As for the part \text{\lcirc{3}}, for any $\kappa>0$ it follows that
	\begin{equation*}
		\begin{aligned}		
			&2r_1\langle x(z^t,v(z^{t+1}))-x(y^{t+1},z^{t+1},v^t),z^{t+1}-z^t \rangle\\
			=\ & 2r_1\langle x(z^t,v(z^{t+1}))-x(z^{t+1},v(z^{t+1})),z^{t+1}-z^t \rangle + 2r_1\langle x(z^{t+1},v(z^{t+1}))-x(y^{t+1},z^{t+1},v^t),z^{t+1}-z^t \rangle\\ 
			\geq & -2r_1\sigma_2\|z^{t+1}-z^t\|^2-\frac{r_1}{\kappa} \|z^{t+1}-z^t\|^2-r_1\kappa\|x(z^{t+1},v(z^{t+1}))-x(y^{t+1},z^{t+1},v^t)\|^2, 
		\end{aligned}
	\end{equation*}
	where the inequality is from the Cauchy-Schwarz inequality, AM-GM inequality, and error bound (iii) in Lemma \ref{lem:2}. Hence,
	\begin{equation*}
 \begin{aligned}
		&\Phi(x^t,y^t,z^t,v^t)-\Phi(x^{t+1},y^{t+1},z^{t+1},v^{t+1})\\
  \geq\ &\left(\frac{1}{c}-\frac{L_x+r_1}{2}-L_y\right)\|x^{t+1}-x^t\|^2+\left(\frac{1}{\alpha}+\frac{r_2-L_y}{2}-L_d-L_y\sigma_6^2\right)\|y^{t+1}-y^t\|^2+\\
  &r_1\left(\frac{2-\beta}{2\beta}-2\sigma_2-\frac{1}{\kappa}\right)\|z^{t+1}-z^t\|^2+\frac{(2-\mu)r_2}{2\mu}\|v^{t+1}-v^t\|^2-\\
		&r_1\kappa\underbrace{\|x(z^{t+1},v(z^{t+1}))-x(y^{t+1},z^{t+1},v^t)\|^2}_\text{\lcirc{4}}.
	\end{aligned}
	\end{equation*}
	Next, we focus on the negative term. From the fact $x(z,v)=x(y(z,v),z,v)$ and $x(y,z,v)=x(y,z,v')$, the inequality \text{\lcirc{4}} is bounded as follows:
	\begin{equation}\label{eq:4_ub}
	\begin{aligned}
		&\|x(z^{t+1},v(z^{t+1}))-x(y^{t+1},z^{t+1},v^t)\|^2\\
		\leq\ &2\|x(z^{t+1},v(z^{t+1}))-x(z^{t+1},v_+^t(z^{t+1}))\|^2+2\|x(z^{t+1},v_+^t(z^{t+1}))-x(y^{t+1},z^{t+1},v^t)\|^2\ \\
		\leq\ &2\|x(z^{t+1},v(z^{t+1}))-x(z^{t+1},v_+^t(z^{t+1}))\|^2+2\sigma_1^2\|y^{t+1}-y(z^{t+1},v_+^t(z^{t+1}))\|^2.
	\end{aligned}
	\end{equation}
 Here, the second inequality is due to the error bound (i) in Lemma \ref{lem:2}. We can further simplify the second error term by (iii)-(iv) in Lemma \ref{lem:5} and (iv), (vii) in Lemma \ref{lem:2}:
 \begin{equation}\label{eq:21}
 \begin{aligned}
     &\|y^{t+1}-y(z^{t+1},v_+^t(z^{t+1}))\|^2\\
     \leq\ & 3\|y^{t+1}-y(z^t,v^t)\|^2+ 3\|y(z^t,v^t)-y(z^{t+1},v^t)\|^2 + 3\|y(z^{t+1},v^t)-y(z^{t+1},v_+^t(z^{t+1}))\|^2\\
     \leq\ & 6L_y^2\alpha^2\sigma_6^2\|x^t-x^{t+1}\|^2+6(\sigma_8+1)^2\|y^t-y_+^t(z^t,v^t)\|^2+3\sigma_3^2\|z^t-z^{t+1}\|^2+3\sigma_5^2\|v^t-v_+^t(z^{t+1})\|^2.
 \end{aligned}
 \end{equation}
Plugging \eqref{eq:21} into \eqref{eq:4_ub}, we have 
\[
\begin{aligned}
    &\|x(z^{t+1},v(z^{t+1}))-x(y^{t+1},z^{t+1},v^t)\|^2\\
    \leq\ & 2\|x(z^{t+1},v(z^{t+1}))-x(z^{t+1},v_+^t(z^{t+1}))\|^2+12L_y^2\alpha^2\sigma_1^2\sigma_6^2\|x^t-x^{t+1}\|^2+\\
    &12\sigma_1^2(\sigma_8+1)^2\|y^t-y_+^t(z^t,v^t)\|^2+6\sigma_1^2\sigma_3^2\|z^t-z^{t+1}\|^2+6\sigma_1^2\sigma_5^2\|v^t-v_+^t(z^{t+1})\|^2.
\end{aligned}
\]
	Summing the above inequalities and letting $s_1\coloneqq\frac{1}{c}-\frac{L_x+r_1}{2}-L_y$, $ s_2\coloneqq\frac{1}{\alpha}+\frac{r_2-L_y}{2}-L\coloneqq_d-L_y\sigma_6^2$, and $s_3\coloneqq r_1\left(\frac{2-\beta}{2\beta}-2\sigma_2-\frac{1}{\kappa}\right)$, we have
	\begin{equation}\label{eq:22}
	\begin{aligned}
		&\Phi(x^t,y^t,z^t,v^t)-\Phi(x^{t+1},y^{t+1},z^{t+1},v^{t+1})\\
		\geq\ & s_1\|x^{t+1}-x^t\|^2+ s_2\|y^{t+1}-y^t\|^2+s_3\|z^{t+1}-z^t\|^2+\frac{(2-\mu)r_2}{2\mu}\|v^{t+1}-v^t\|^2- \\
		&12r_1\kappa L_y^2\alpha^2\sigma_1^2\sigma_6^2\|x^t-x^{t+1}\|^2-12r_1\kappa\sigma_1^2(\sigma_8+1)^2\|y^t-y_+^t(z^t,v^t)\|^2-6r_1\kappa\sigma_1^2\sigma_3^2\|z^t-z^{t+1}\|^2-\\
  &6r_1\kappa\sigma_1^2\sigma_5^2\|v^t-v_+^t(z^{t+1})\|^2-2r_1\kappa\|x(z^{t+1},v(z^{t+1}))-x(z^{t+1},v_+^t(z^{t+1}))\|^2\\
  =\ & \left(s_1-12r_1\kappa L_y^2\alpha^2\sigma_1^2\sigma_6^2\right)\|x^t-x^{t+1}\|^2+s_2\|y^{t+1}-y^t\|^2+\left(s_3-6r_1\kappa\sigma_1^2\sigma_3^2\right)\|z^{t+1}-z^t\|^2+\\
  &\frac{(2-\mu)r_2}{2\mu}\|v^{t+1}-v^t\|^2-6r_1\kappa\sigma_1^2\sigma_5^2\|v^t-v_+^t(z^{t+1})\|^2-12r_1\kappa\sigma_1^2(\sigma_8+1)^2\|y^t-y_+^t(z^t,v^t)\|^2-\\
  &2r_1\kappa\|x(z^{t+1},v(z^{t+1}))-x(z^{t+1},v_+^t(z^{t+1}))\|^2.
	\end{aligned}
	\end{equation}
 	Moreover, to make the terms in the upper bound consistent, by error bound (iv) in Lemma \ref{lem:5}, we have
	\begin{equation}\label{eq:23}
	\begin{aligned}
		\|y^{t+1}-y^t\|^2\geq\ & \frac{1}{2}\|y^t-y_+^t(z^t,v^{t})\|^2-\|y^{t+1}-y_{+}^t(z^t,v^{t})\|^2\\
		\geq\ &  \frac{1}{2}\|y^t-y_+^t(z^t,v^{t})\|^2  -L_y^2\alpha^2\sigma_6^2\|x^t-x^{t+1}\|^2.
       \end{aligned}
       \end{equation}
  Similarly, we provide a lower bound for $\|v^{t+1}-v^t\|^2$ as  follows:
  \begin{equation}\label{eq:24}
  \begin{aligned}
     \|v^{t+1}-v^t\|^2\geq\ & \frac{1}{2}\|v^t-v_{+}^t(z^{t+1})\|^2-\|v^{t+1}-v_+^t(z^{t+1})\|^2\\
     \geq\ & \frac{1}{2}\|v^t-v_{+}^t(z^{t+1})\|^2-\mu^2 \left(4L_y^2\alpha^2\sigma_6^2\|x^t-x^{t+1}\|^2+4(\sigma_8+1)^2\|y^t-y_+^t(z^t,v^t)\|^2+\right.\\
     &\left.2\sigma_3^2\|z^t-z^{t+1}\|^2\right).
 \end{aligned}
   \end{equation}
Recalling that $v_+^t(z^{t+1}) =v^t+\mu (y(z^{t+1},v^t)-v^t)$, the last inequality can be obtained by using error bounds (iv) in Lemma \ref{lem:2} and (iii), (iv) in Lemma \ref{lem:5}, i.e.,
 \[
 \begin{aligned}
     &\|v^{t+1}-v_+^t(z^{t+1})\|^2=\mu^2 \|y^{t+1}-y(z^{t+1},v^{t})\|^2\\
     \leq\  & \mu^2 \left(2\|y^{t+1}-y(z^t,v^t)\|^2+2\|y(z^t,v^t)-y(z^{t+1},v^t)\|^2\right)\\
     \leq\ & \mu^2 \left(4L_y^2\alpha^2\sigma_6^2\|x^t-x^{t+1}\|^2+4(\sigma_8+1)^2\|y^t-y_+^t(z^t,v^t)\|^2+2\sigma_3^2\|z^t-z^{t+1}\|^2\right).
 \end{aligned}
 \]
Substituting \eqref{eq:23} and \eqref{eq:24} to \eqref{eq:22} yields
 \[
 \begin{aligned}
     &\Phi(x^t,y^t,z^t,v^t)-\Phi(x^{t+1},y^{t+1},z^{t+1},v^{t+1})\\
		\geq\ & \left(s_1-(12r_1\kappa\sigma_1^2+s_2+2\mu(2-\mu) r_2) L_y^2\alpha^2\sigma_6^2\right)\|x^{t+1}-x^t\|^2+\\
  &\left(\frac{s_2}{2}-(12r_1\kappa \sigma_1^2+2\mu(2-\mu)r_2)(1+\sigma_8)^2\right)\|y^t-y_{+}^t(z^t,v^t)\|^2+\\
  &\left(s_3-(\mu(2-\mu)r_2+6r_1\kappa\sigma_1^2)\sigma_3^2\right)\|z^t-z^{t+1}\|^2+\left(\frac{(2-\mu)r_2}{4\mu}-6r_1\kappa\sigma_1^2\sigma_5^2\right)\|v^t-v_+^t(z^{t+1})\|^2-\\
  &2r_1\kappa\|x(z^{t+1},v(z^{t+1}))-x(z^{t+1},v_+^t(z^{t+1}))\|^2.\\
 \end{aligned}
 \]
 Recalling that $L_y=\lambda L_x$, we have the following results:
 \begin{itemize}
     \item As $r_1\geq 2L_x$, we have $\sigma_1=\frac{L_y}{r_1-L_x}+1\leq \frac{L_y}{L_x}+1=\lambda+1$ and $\sigma_2=\frac{r_1}{r_1-L_x}\leq 2$. As $r_2\geq 2L_y$, similarly, we find $\sigma_3=\frac{r_1\sigma_1}{r_2-L_y}+\frac{\sigma_2}{\sigma_1}\leq \frac{r_1(\lambda+1)}{L_y}+2$ and $\sigma_5=\frac{r_2}{r_2-L_y}\leq 2$. With these bounds and set $\kappa:=2\beta$ with $0 < \beta \leq \min\{\frac{24r_1}{360r_1+5r_1^2\lambda+(2L_y+5r_1)^2},\frac{\alpha L_y^2}{384r_1(\lambda+5)(\lambda+1)^2}\} $ and $0<\mu \leq \min\{\frac{2(\lambda+5)}{2(\lambda+5)+L_y^2},\frac{\alpha L_y^2}{64r_2(\lambda+5)}\}$ we derive that
             \[
\begin{aligned}
    \frac{(2-\mu)r_2}{4\mu}-6r_1\kappa\sigma_1^2\sigma_5^2&\geq \frac{r_2}{2\mu}-\frac{r_2}{4} -48(\lambda+1)^2r_1\beta\\
    &\geq \frac{r_2}{\mu}\left(\frac{1}{2}-\frac{\mu}{4}-\frac{L_y^2\alpha \mu}{8(\lambda+5)}\right) \geq \frac{r_2}{4\mu} 
\end{aligned}
        \]
     and
    \begin{equation}\label{eq:25}
         \mu(2-\mu)r_2+6r_1\kappa\sigma_1^2\leq 2r_2\mu+12r_1\beta(\lambda+1)^2\leq \frac{\alpha L_y^2}{16(\lambda+5)}.
    \end{equation}
    With the upper bound \eqref{eq:25}, we can further bound the coefficient of $\|z^{t+1}-z^t\|^2$ by
        \[
        \begin{aligned}
           &s_3-\left(\mu(2-\mu)r_2+6r_1\kappa\sigma_1^2\right)\sigma_3^2\\
           \geq\ & r_1\left(\frac{1}{\beta}-\frac{1}{2}-4-\frac{1}{2\beta} \right)-\frac{L_y^2}{16(\lambda+5)}\left( \frac{r_1^2(\lambda+1)^2}{L_y^2}+4+\frac{4r_1(\lambda+1)}{L_y}\right) \\
        \geq\ &  \frac{r_1}{\beta}\left(\frac{1}{2}-\left(\frac{9}{2}+\frac{r_1(\lambda+1)}{16}+\frac{L_y^2}{20r_1}+\frac{L_y}{4}\right)\beta \right)\geq \frac{r_1}{5\beta}.
        \end{aligned}
        \]
         \item  Let $\frac{1}{c}-\frac{L_x+r_1}{2}\geq \frac{1}{3c}$ and $\frac{1}{3c}-L_y \geq \frac{1}{6c}$. Then, we have $\frac{1}{c}\geq \max\left\{\frac{3}{4}(L_x+r_1),6L_y\right\}$, which implies that $s_1\geq \frac{1}{6c}$ and $\sigma_6=\frac{2r_1+\frac{1}{c}}{r_1-L_x}\geq 2+\frac{1}{cr_1}\geq \frac{11}{4} $.
        \item  Let $\frac{1}{\alpha}-L_y\sigma_6^2  \geq \frac{1}{3\alpha}$ and $\frac{1}{3\alpha}-L_d\geq \frac{1}{6\alpha}$. Then, we have $ \frac{1}{\alpha}\geq \max\left\{\frac{3}{2}L_y\sigma_6^2, 6L_d\right\}$ and $s_2\geq \frac{1}{6\alpha}+\frac{r_2-L_y}{2}$. $\sigma_8=\frac{\frac{1}{\alpha}+L_d}{r_2-L_y}\leq \frac{2}{\alpha r_2 }+\lambda+4 $. With these bounds, if we further assume that $\frac{1}{\alpha}\geq 5\sqrt{\lambda+5}L_y$, we obtain
        \[
        \begin{aligned}
            &\frac{s_2}{2}-\left(12r_1\kappa \sigma_1^2+2\mu(2-\mu)r_2\right)(1+\sigma_8)^2\\
            \geq\ &\frac{1}{12\alpha}+\frac{r_2-L_y}{4}- \frac{\alpha L_y^2}{8(\lambda+5)}\left(\frac{4}{\alpha^2 r_2^2}+(\lambda+5)^2+\frac{4(\lambda+5)}{\alpha r_2}\right)\\
            \geq\ &\frac{1}{12\alpha}+\frac{L_y}{4}-\frac{1}{8(\lambda+5)\alpha}-\frac{(\lambda+5)\alpha L_y^2}{8}-\frac{L_y}{4}\\
            \geq\ & \frac{1}{60\alpha}-\frac{(\lambda+5)\alpha L_y^2}{8}\geq \frac{1}{90\alpha}\geq \frac{r_2}{15}
        \end{aligned}
        \]
        and
    \[
    \begin{aligned}
        s_1-(12r_1\kappa\sigma_1^2+s_2+2\mu(2-\mu) r_2) L_y^2\alpha^2\sigma_6^2 &   \geq \frac{1}{6c}-\frac{\alpha^3 L_y^4\sigma_6^2}{8(\lambda+5)}-\frac{2L_y}{3} \\
        &\geq \frac{1}{6c}- \frac{13L_y}{2500}-\frac{2L_y}{3}\\
        &\geq \frac{1}{6c}-\frac{7L_y}{10}\geq \frac{1}{24c}\geq \frac{r_1}{32},
    \end{aligned}
    \]
    where the first inequality is due to $\frac{2}{3\alpha}\geq L_y\sigma_6^2$ and 
    \[
        \begin{aligned}
            s_2 &\leq \frac{1}{\alpha}+\frac{r_2-L_y}{2}-L_y\sigma_6^2-L_d=\frac{1}{\alpha}-\frac{r_2+L_y}{2}-L_y\sigma_6^2-(\sigma_1+1)L_y\leq \frac{1}{\alpha}.
        \end{aligned}
    \]
       \end{itemize}
      Putting together all the pieces, we get 
	\[
	\begin{aligned}
		&\Phi(x^t,y^t,z^t,v^t)-\Phi(x^{t+1},y^{t+1},z^{t+1},v^{t+1})\\
		\geq\ & \frac{r_1}{32}\|x^{t+1}-x^t\|^2+\frac{r_2}{15}\|y^t-y^t_{+}(z^t,v^t)\|^2+\frac{r_1}{5\beta}\|z^t-z^{t+1}\|^2+\frac{r_2}{4\mu}\|v_{+}^t(z^{t+1})-v^t\|^2\ -\\
   &4r_1 \beta\|x(z^{t+1},v(z^{t+1}))-x(z^{t+1},v_+^t(z^{t+1}))\|^2.
	\end{aligned}
	\]
	The proof is complete.

\section{Proof of Proximal Error Bounds and Dual Error Bounds}\label{APP:E}
\subsection{Proof of Proposition \ref{prop:NC_KL} under Assumption \ref{ass:2}}
Note that $h(\cdot, z,v)=\max_{y\in \Y} F(\cdot,y,z,v)$ is $(r_1-L_x)$-strongly convex. Hence, we have
  \begin{equation}\label{eq:15}
    \begin{aligned}
      &\max_v h(x(z^{t+1},v_+^t(z^{t+1})),z^{t+1},v)-h(x(z^{t+1},v(z^{t+1})),z^{t+1},v(z^{t+1}))\\
    \geq\ & h(x(z^{t+1},v_+^t(z^{t+1})),z^{t+1},v(z^{t+1}))-h(x(z^{t+1},v(z^{t+1})),z^{t+1},v(z^{t+1}))\\
    \geq\ & \frac{r_1-L_x}{2}\|x(z^{t+1},v_+^t(z^{t+1}))-x(z^{t+1},v(z^{t+1}))\|^2.
  \end{aligned}
  \end{equation}
 
		On the other side, by leveraging the K\L{} Assumption \ref{ass:2}, $h(x,z,\cdot)$ satisfies the K\L{} property with same exponent \citep[Theorem 5.2]{yu2022kurdyka}. Thus, we get 
		\begin{equation}\label{eq:16}
			\begin{aligned}
			&\max_v h(x(z^{t+1},v_+^t(z^{t+1})),z^{t+1},v)-h(x(z^{t+1},v(z^{t+1})),z^{t+1},v(z^{t+1}))\\
    \leq\ &\max_v h(x(z^{t+1},v_+^t(z^{t+1})),z^{t+1},v)-h(x(z^{t+1},v_+^t(z^{t+1})),z^{t+1},v_+^t(z^{t+1}))\\
    \leq\ & \frac{1}{\tau}\|\nabla_v h(x(z^{t+1},v_+^t(z^{t+1})),z^{t+1},v_+^t(z^{t+1}))\|^{\frac{1}{\theta}},
			\end{aligned}
		\end{equation}
	where the first inequality is because 
 \begin{align*}
 h(x(z^{t+1},v(z^{t+1})),z^{t+1},v(z^{t+1}))
 &=\max_v h(x(z^{t+1},v),z^{t+1},v)\\
 &\geq h(x(z^{t+1},v_+^t(z^{t+1})),z^{t+1},v_+^t(z^{t+1})).
 \end{align*}
  Next, we further bound the right-hand part. Recalling the definition of $v_+^t(z^{t+1})$ and with the bound (vii) in Lemma \ref{lem:2}, we obtain
\begin{equation}\label{eq:17}
\begin{aligned}
       &\|\nabla_v h(x(z^{t+1},v_+^t(z^{t+1})),z^{t+1},v_+^t(z^{t+1}))\|\\
    =\ & r_2\|v_+^{t}(z^{t+1})-y(x(z^{t+1},v_+^t(z^{t+1})),z^{t+1},v_+^t(z^{t+1}))\|\\
    =\ & r_2\|(1-\mu)v^t+\mu y(z^{t+1},v^{t})-y(z^{t+1},v_+^t(z^{t+1}))\|\\
    \leq\ & r_2(1-\mu)\|v^t-y(z^{t+1},v^t)\|+r_2\| y(z^{t+1},v^{t})-y(z^{t+1},v_+^t(z^{t+1}))\|\\
    \leq\ & r_2\left(\frac{(1-\mu)}{\mu}+\sigma_5\right)\|v_+^t(z^{t+1})-v^t\|.
  \end{aligned}
\end{equation}
		Combining \eqref{eq:15} \eqref{eq:16}, and \eqref{eq:17}, we have
		\[
		 \|x(z^{t+1},v_+^t(z^{t+1}))-x(z^{t+1},v(z^{t+1}))\|^2\leq \frac{2}{(r_1-L_x)\tau}\left(\frac{r_2(1-\mu)}{\mu}+r_2\sigma_5\right)^{\frac{1}{\theta}}\|v_+^t(z^{t+1})-v^t\|^{\frac{1}{\theta}}.
		\]
		The proof is complete.

\subsection{Proof of Proposition \ref{prop:NC_KL} under Assumption \ref{ass:3}}
By the strong convexity of $h(\cdot,z,v)=\max_{y\in \Y} F(\cdot, y,z,v)$, we have
  \begin{equation*}
  \label{eq:15_c}
    \begin{aligned}
      &h(x(z^{t+1},v_+^t(z^{t+1})),z^{t+1},v(z^{t+1}))-h(x(z^{t+1},v_+^t(z^{t+1})),z^{t+1},v_+^t(z^{t+1}))\\
    \geq\ & h(x(z^{t+1},v_+^t(z^{t+1})),z^{t+1},v(z^{t+1}))-h(x(z^{t+1},v(z^{t+1})),z^{t+1},v(z^{t+1}))\\
    \geq\ & \frac{r_1-L_x}{2}\|x(z^{t+1},v_+^t(z^{t+1}))-x(z^{t+1},v(z^{t+1}))\|^2.
  \end{aligned}
  \end{equation*}
On the other side, 
		\begin{equation*}
			\begin{aligned}
&h(x(z^{t+1},v_+^t(z^{t+1})),z^{t+1},v(z^{t+1}))-h(x(z^{t+1},v_+^t(z^{t+1})),z^{t+1},v_+^t(z^{t+1}))\\
    \leq\ &  \langle \nabla_v h(x(z^{t+1},v_+^t(z^{t+1})),z^{t+1},v_+^t(z^{t+1})), v(z^{t+1})-v_+^t(z^{t+1}) \rangle\\
    \leq\ & 2\diam(\Y)r_2\left(\frac{1-\mu}{\mu}+\sigma_5\right)\|v_+^t(z^{t+1})-v^t\|.
			\end{aligned}
		\end{equation*}
Here, the first inequality is because $h(x,z,\cdot)=\max_{y\in \Y} f(x,y)-\frac{r_2}{2}\|y-\cdot\|^2$ is concave \citep[Theorem 2.26]{rockafellar2009variational}. The last inequality is from \eqref{eq:17} and the fact that $\|v^t\|\leq \diam(\Y)$, which could be derived from mathematical induction. Moreover, $v(z)=\argmax_{v} \big\{\max_{y\in \Y} f(x(y,z,v),y)+\frac{r_1}{2}\|x(y,z,v)-z\|^2-\frac{r_2}{2}\|y-v\|^2\}$ should satisfy $v(z)=\prox_{\frac{f(x(\cdot,z,v),\cdot)+\frac{r_1}{2}\|x(\cdot,z,v)-z\|^2+\b1_{\Y}(\cdot)}{r_2}}(z(v))$ \citep[Theorem 2.26]{rockafellar2009variational} , which indicates that $\|v(z)\| \leq \diam(\Y)$. Thus, 
\[
    \|x(z^{t+1},v_+^t(z^{t+1}))-x(z^{t+1},v(z^{t+1}))\|^2 \leq \frac{4r_2\diam(\Y)}{r_1-L_x}\left(\frac{1-\mu}{\mu}+\sigma_5\right)\|v_+^t(z^{t+1})-v^t\|.
\]

\subsection{Dual Error Bounds}
Before presenting the results, we first introduce some useful notation:
\begin{itemize}
    \item $x_{r_1}(y,z):=\argmin f(x,y)+\frac{r_1}{2}\|x-z\|^2$.
    \item $x^*(z):=\argmin_{x\in \X} \max_{y\in \Y} f(x,y)+\frac{r_1}{2}\|x-z\|^2$.
    \item $y_+(z):=\proj_{\Y}(y+\alpha \nabla_y f(x_{r_1}(y,z),y))$.
\end{itemize}
 \begin{prop}\label{prop:dual_eb_kl}
 Suppose that Assumption \ref{ass:2} holds. Then, we have 
\[
\|x^*(z)-x(z,v)\|^2\leq \omega_2\|v-y(z,v)\|^{\frac{1}{\theta}},
\]      
where $\omega_2:=\frac{2r_2^{\frac{1}{\theta}}}{\tau(r_1-L_x)}$.
 \end{prop}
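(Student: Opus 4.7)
The plan is to reduce $\|x^*(z)-x(z,v)\|^2$ to $\max_{y\in\Y} f(x(z,v),y)-f(x(z,v),y(z,v))$ by combining strong convexity with a weak-duality argument, and then to invoke Assumption~\ref{ass:2} applied at the saddle component $y(z,v)$. The envelope $\tilde h(\cdot,z)\coloneqq\max_{y\in\Y}f(\cdot,y)+\tfrac{r_1}{2}\|\cdot-z\|^2$ is $(r_1-L_x)$-strongly convex with unique minimizer $x^*(z)$, so
\[
\tfrac{r_1-L_x}{2}\,\|x^*(z)-x(z,v)\|^2 \;\leq\; \tilde h(x(z,v),z)-\tilde h(x^*(z),z),
\]
and it suffices to control the right-hand side by $\|v-y(z,v)\|^{1/\theta}$.

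For the lower bound on $\tilde h(x^*(z),z)$, I would use the identity $f(x,y)+\tfrac{r_1}{2}\|x-z\|^2=F(x,y,z,v)+\tfrac{r_2}{2}\|y-v\|^2$ together with the trivial weak duality $\min_x\max_y\geq\max_y\min_x$ to get
\[
\tilde h(x^*(z),z) \;\geq\; \max_{y\in\Y}\!\big[d(y,z,v)+\tfrac{r_2}{2}\|y-v\|^2\big]\;\geq\; d(y(z,v),z,v)+\tfrac{r_2}{2}\|y(z,v)-v\|^2.
\]
Because $F$ is $(r_1-L_x)$-strongly convex in $x$ and $(r_2-L_y)$-strongly concave in $y$, Sion's minimax theorem gives $p(z,v)=\min_x h(x,z,v)=\max_y d(y,z,v)$, hence $d(y(z,v),z,v)=h(x(z,v),z,v)$. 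Pairing this with the direct expansion
\[
\tilde h(x(z,v),z)-h(x(z,v),z,v) \;=\; \big[\max_{y\in\Y}f(x(z,v),y)-f(x(z,v),y(z,v))\big]+\tfrac{r_2}{2}\|y(z,v)-v\|^2,
\]
the $\tfrac{r_2}{2}\|y(z,v)-v\|^2$ terms cancel exactly and I obtain
\[
\tilde h(x(z,v),z)-\tilde h(x^*(z),z) \;\leq\; \max_{y\in\Y} f(x(z,v),y)-f(x(z,v),y(z,v)).
\]

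To finish, I would use the optimality of $y(z,v)$ for $\max_{y\in\Y} F(x(z,v),y,z,v)$, which yields $\nabla_y f(x(z,v),y(z,v))-r_2(y(z,v)-v)\in\partial\b1_\Y(y(z,v))$ and therefore
\[
\dist\!\big(\z,\,-\nabla_y f(x(z,v),y(z,v))+\partial\b1_\Y(y(z,v))\big)\;\leq\; r_2\|v-y(z,v)\|.
\]
Substituting this into Assumption~\ref{ass:2} at $x=x(z,v)$, $y=y(z,v)$ and raising to power $1/\theta$ gives a bound of the form $(r_2/\tau)^{1/\theta}\|v-y(z,v)\|^{1/\theta}$ on $\max_y f(x(z,v),y)-f(x(z,v),y(z,v))$; dividing through by $(r_1-L_x)/2$ then delivers the proposition with the stated constant $\omega_2$. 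The main obstacle is arranging the exact cancellation in the middle step: this requires Sion's theorem to equate $p(z,v)$ with both $h(x(z,v),z,v)$ and $d(y(z,v),z,v)$, and depends crucially on the strong convex-strong concave structure of $F$. Absent this cancellation, one would be left with a residual $\tfrac{r_2}{2}\|y(z,v)-v\|^2$ contribution that is of the wrong order as $\|v-y(z,v)\|\to 0$ whenever the K\L{} exponent satisfies $\theta>\tfrac12$, and would spoil the bound.
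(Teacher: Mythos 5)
Your proof is correct and follows essentially the same route as the paper's: strong convexity of the envelope $\max_{y\in\Y}f(\cdot,y)+\tfrac{r_1}{2}\|\cdot-z\|^2$, a weak-duality lower bound on its minimum that reduces everything to the gap $\max_{y\in\Y} f(x(z,v),y)-f(x(z,v),y(z,v))$, and then Assumption~\ref{ass:2} combined with the optimality condition of $y(z,v)$, which gives $\dist(\z,-\nabla_y f(x(z,v),y(z,v))+\partial\b1_\Y(y(z,v)))\le r_2\|v-y(z,v)\|$. The paper reaches the same reduction more directly by evaluating $\max_y\min_x$ at $y=y(z,v)$ without introducing the $\tfrac{r_2}{2}\|y-v\|^2$ bookkeeping, so no cancellation is needed; incidentally, your closing aside has the direction reversed --- a leftover $\tfrac{r_2}{2}\|y(z,v)-v\|^2$ residual would be absorbable into the $\|v-y(z,v)\|^{1/\theta}$ bound precisely when $\theta\ge\tfrac12$ (since $\Y$ is bounded) and would only spoil it when $\theta<\tfrac12$.
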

 \begin{proof}
   By the strong convexity of $\phi(\cdot,z)=\max_{y\in \Y} f(\cdot,y)+\frac{r_1}{2}\|\cdot-z\|^2$, we have
     \[
     \begin{aligned}
    &\max_{y\in \Y} f(x(z,v),y)+\frac{r_1}{2}\|x(z,v)-z\|^2-\max_{y\in\Y} f(x^*(z),y)+\frac{r_1}{2}\|x^*(z)-z\|^2\\
         \geq\ & \frac{r_1-L_x}{2}\|x^*(z)-x(z,v)\|^2.
     \end{aligned}
     \]
     On the other hand, we have 
     \[
      \begin{aligned}
         &\max_{y\in \Y} f(x(z,v),y)+\frac{r_1}{2}\|x(z,v)-z\|^2-\max_{y\in\Y} f(x^*(z),y)+\frac{r_1}{2}\|x^*(z)-z\|^2\\
         \leq\ &\max_{y\in \Y} f(x(z,v),y)+\frac{r_1}{2}\|x(z,v)-z\|^2-\left(f(x(z,v),y(z,v))+\frac{r_1}{2}\|x(z,v)-z\|^2\right)\\
         =\ &\max_{y\in \Y} f(x(z,v),y)-f(x(z,v),y(z,v)).
     \end{aligned}
     \]
     Recall that $x(z,v)=\argmin_{x\in\X} \max_{y\in \Y} F(x,y,z,v)$. Then, the first inequality is from the fact that
      \[
     \begin{aligned}
         \max_{y\in\Y} f(x^*(z),y)+\frac{r_1}{2}\|x^*(z)-z\|^2&\geq  \max_{y\in \Y} \min_{x\in \X} f(x,y)+\frac{r_1}{2}\|x-z\|^2\\
         &\geq \min_{x\in \X} f(x,y(z,v))+\frac{r_1}{2}\|x-z\|^2\\
         &= f(x(z,v),y(z,v))+\frac{r_1}{2}\|x(z,v)-z\|^2.
     \end{aligned}
     \]
     Under Assumption \ref{ass:2}, we find that
     \[
     \begin{aligned}
         \max_{y\in \Y} f(x(z,v),y)-f(x(z,v),y(z,v))&\leq \frac{1}{\tau}\dist(\z,-\nabla_y f(x(z,v),y(z,v))+\partial\b1_{\Y}(y(z,v)))^{\frac{1}{\theta}}\\
         &\leq \frac{1}{\tau}\|r_2(v-y(z,v))\|^{\frac{1}{\theta}},
     \end{aligned}
     \]
     where the last inequality is due to the first optimality condition of $y(z,v)=\argmax_{y\in \Y} f(x(y,z,v),y)-\frac{r_2}{2}\|y-v\|^2$.
     \end{proof}
     \begin{prop}\label{prop:dual_eb_c}
         Suppose that Assumption \ref{ass:3} holds. Then, we have
         \[
         \|x^*(z)-x(z,v)\|^2\leq \omega_3\|v-y(z,v)\|,
         \]
         where $\omega_3:=\frac{4r_1\diam(\Y)}{r_1-L_x}$.
     \end{prop}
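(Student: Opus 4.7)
The plan is to mirror the three-step structure of Proposition \ref{prop:dual_eb_kl}, replacing only the K\L{}-based step with a linear estimate that exploits concavity of $f(x,\cdot)$. The same ``sandwich'' strategy applies: bound the primal gap from below by strong convexity and from above by a quantity controlled by $\|v-y(z,v)\|$.

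First, I would define $\phi(\cdot,z):=\max_{y\in\Y} f(\cdot,y)+\tfrac{r_1}{2}\|\cdot-z\|^2$, which is $(r_1-L_x)$-strongly convex on $\X$ since $r_1>L_x$. Exactly as in Proposition \ref{prop:dual_eb_kl}, strong convexity together with optimality of $x^*(z)$ gives
\[
\phi(x(z,v),z)-\phi(x^*(z),z)\;\geq\;\frac{r_1-L_x}{2}\,\|x^*(z)-x(z,v)\|^2,
\]
while the min--max inequality
\[
\phi(x^*(z),z)\;\geq\;\max_{y\in\Y}\min_{x\in\X}\{f(x,y)+\tfrac{r_1}{2}\|x-z\|^2\}\;\geq\;f(x(z,v),y(z,v))+\tfrac{r_1}{2}\|x(z,v)-z\|^2
\]
lets me upper bound the same gap by $\max_{y\in\Y} f(x(z,v),y)-f(x(z,v),y(z,v))$.

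Second, instead of applying the K\L{} property, I would exploit Assumption~\ref{ass:3}. Letting $y^\star\in\argmax_{y\in\Y} f(x(z,v),y)$, concavity of $f(x(z,v),\cdot)$ yields
\[
f(x(z,v),y^\star)-f(x(z,v),y(z,v))\;\leq\;\langle \nabla_y f(x(z,v),y(z,v)),\,y^\star-y(z,v)\rangle.
\]
Because $y(z,v)=y(x(z,v),z,v)=\argmax_{y\in\Y}\{f(x(z,v),y)-\tfrac{r_2}{2}\|y-v\|^2\}$ (Danskin's theorem applied to $d(\cdot,z,v)$), its first-order optimality condition reads $\langle \nabla_y f(x(z,v),y(z,v))-r_2(y(z,v)-v),\,y-y(z,v)\rangle\leq 0$ for every $y\in\Y$. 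Substituting $y=y^\star$ and applying Cauchy--Schwarz together with $\|y^\star-y(z,v)\|\leq 2\diam(\Y)$ (which also bounds $\|v\|$ via the same argument used for $v(z)$ in the concave case of Proposition \ref{prop:NC_KL}) produces a bound of the form $2r_2\diam(\Y)\|v-y(z,v)\|$.

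Chaining the two estimates and dividing by $(r_1-L_x)/2$ yields the stated inequality, up to the $r_1$ versus $r_2$ factor in $\omega_3$. The main obstacle is not analytical but organizational: correctly identifying $y(z,v)$ as an unconstrained maximizer of $F(x(z,v),\cdot,z,v)$ so that the Moreau-regularization term $r_2(y-v)$ actually appears in the variational inequality, and then translating the resulting bound in terms of $\diam(\Y)$ rather than a more optimistic quantity such as $\|y^\star-y(z,v)\|$ itself (which we cannot independently control without stronger assumptions).
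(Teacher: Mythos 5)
Your proof is correct and follows essentially the same route as the paper's: a strong-convexity lower bound on the primal gap, a min--max/max--min sandwich to reduce it to the dual gap at $x(z,v)$, and then the concavity gradient inequality combined with the first-order optimality condition of $y(z,v)$ and a $\diam(\Y)$ bound; the only cosmetic difference is that the paper runs the argument through $\psi(x,y,z)=f(x,y)+\tfrac{r_1}{2}\|x-z\|^2$ evaluated at a max--min solution $y^*(z)$ rather than through the value function $\max_{y\in\Y}f(\cdot,y)+\tfrac{r_1}{2}\|\cdot-z\|^2$. Your observation about the $r_1$ versus $r_2$ factor is also well taken: since the Moreau term is $\tfrac{r_2}{2}\|y-v\|^2$, the optimality condition produces $r_2(y(z,v)-v)$ and the natural constant is $\tfrac{4r_2\diam(\Y)}{r_1-L_x}$ --- the paper's own proof writes $r_1(y(z,v)-v)$ at this step, which appears to be a slip propagated into the stated $\omega_3$.
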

     \begin{proof}
     We first define $\psi(x,y,z)= f(x,y)+\frac{r_1}{2}\|x-z\|^2$ and $Y(z)=\argmax_{y\in \Y} \min_{x\in \X} \psi(x,y,z)=\argmax_{y\in \Y} \psi(x_{r_1}(y,z),y,z)$. 
     Let $y^*(z)$ be an arbitrary solution in $Y(z)$. By the strong convexity of $\psi(\cdot,y,z)= f(\cdot,y)+\frac{r_1}{2}\|\cdot-z\|^2$, we have
         \[
         \begin{aligned}
             \psi(x(z,v),y^*(z),z)-\psi(x(z,v),y(z,v),z)
             &\geq\psi(x(z,v),y^*(z),z)-\psi(x^*(z),y^*(z),z)\\
             &\geq 
             \frac{r_1-L_x}{2}\|x^*(z)-x(z,v)\|^2.
         \end{aligned}
         \]
         Here, the first inequality is beacuse $\psi(x^*(z),y^*(z),z)=\max_{y\in \Y}\psi(x_{r_1}(y,z),y,z)\geq \psi(x_{r_1}(y(z,v),z),y(z,v),z) =\psi(x(z,v),y(z,v),z)$.
         On the other hand, by the concavity of $\Psi(x,\cdot,z)$,
         \[
         \begin{aligned}
             \psi(x(z,v),y^*(z),z)-\psi(x(z,v),y(z,v),z)&\leq \langle \nabla_y \psi(x(z,v),y(z,v),z),y^*(z)-y(z,v)\rangle\\
             &\leq r_1\|y(z,v)-v\|\|y^*(z)-y(z,v)\|\\
             &\leq 2r_1\diam(\Y)\|y(z,v)-v\|.
         \end{aligned}
         \]
         The second inequality is due to $\langle \nabla_y f(x(z,v),y(z,v))-r_1(y(z,v)-v),y^*(z)-y(z,v)\rangle\leq0$ and the last inequality is because both $y^*(z)$ and $y(z,v)$ lie in set $\Y$.
     \end{proof}

\section{Proof of Theorem \ref{thm:6}}\label{APP:F}
Before presenting the proof of Theorem \ref{thm:6}, we first introduce the following lemma.
\begin{lemma}\label{lem:14}
		Let $\epsilon \geq 0$. Suppose that 
		\[
		\max\left\{\frac{\|x^t-x^{t+1}\|}{c},\frac{\|y^t-y_+^{t}(z^t,v^t)\|}{\alpha},\frac{\|z^t-z^{t+1}\|}{\beta},\frac{\|v^t-v^{t+1}\|}{\mu}\right\}\leq \epsilon.
		\]
		Then, there exists a $\rho>0$ such that $(x^{t+1},y^{t+1})$ is a $\rho\epsilon$-{\rm GS}.
	\end{lemma}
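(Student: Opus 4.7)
\textbf{Proof proposal for Lemma \ref{lem:14}.} The plan is to exhibit, for each of the two stationarity conditions in Definition \ref{def:1}, an explicit subgradient whose norm is $\mO(\epsilon)$. The natural candidates come from the first-order optimality conditions of the two projection updates in Algorithm \ref{alg:1}.

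First I would handle the primal side. The update $x^{t+1}=\proj_{\X}(x^t-c\nabla_x F(x^t,y^t,z^t,v^t))$ yields
\[
\tfrac{1}{c}(x^t-x^{t+1})-\nabla_x f(x^t,y^t)-r_1(x^t-z^t)\in\partial\b1_{\X}(x^{t+1}),
\]
so adding $\nabla_x f(x^{t+1},y^{t+1})$ produces an element of $\nabla_x f(x^{t+1},y^{t+1})+\partial\b1_{\X}(x^{t+1})$. Its norm is controlled by $\tfrac{1}{c}\|x^t-x^{t+1}\|\le\epsilon$, by $L_x(\|x^{t+1}-x^t\|+\|y^{t+1}-y^t\|)$ coming from Assumption \ref{ass:1}, and by $r_1\|x^t-z^t\|$. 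The last term is handled using the averaging step $z^{t+1}-z^t=\beta(x^{t+1}-z^t)$, which gives $\|x^{t+1}-z^t\|=\tfrac{1}{\beta}\|z^{t+1}-z^t\|\le\epsilon$ and hence $\|x^t-z^t\|\le (c+1)\epsilon$ by the triangle inequality.

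The only remaining piece is $\|y^{t+1}-y^t\|$, which is not one of the four quantities assumed small. Here I would invoke Lemma \ref{lem:5}(iv): since $y_+^t(z^t,v^t)$ is the projected gradient step using $x(y^t,z^t,v^t)$ instead of $x^{t+1}$, the nonexpansiveness of the projection together with the Lipschitz property yields $\|y^{t+1}-y_+^t(z^t,v^t)\|\le L_y\alpha\sigma_6\|x^t-x^{t+1}\|\le L_y\alpha\sigma_6 c\epsilon$. Combining with the hypothesis $\|y^t-y_+^t(z^t,v^t)\|\le\alpha\epsilon$, the triangle inequality gives $\|y^{t+1}-y^t\|\le\alpha(1+L_y\sigma_6 c)\epsilon$. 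Collecting constants, the primal subgradient has norm at most $\rho_1\epsilon$ for some explicit $\rho_1$ depending only on $c$, $\alpha$, $r_1$, $L_x$, $L_y$, $\sigma_6$.

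For the dual side I would proceed symmetrically from $y^{t+1}=\proj_{\Y}(y^t+\alpha\nabla_y F(x^{t+1},y^t,z^t,v^t))$, which supplies an element of $-\nabla_y f(x^{t+1},y^{t+1})+\partial\b1_{\Y}(y^{t+1})$ of the form
\[
-\nabla_y f(x^{t+1},y^{t+1})+\tfrac{1}{\alpha}(y^t-y^{t+1})+\nabla_y f(x^{t+1},y^t)-r_2(y^t-v^t).
\]
The first difference is bounded by $L_y\|y^{t+1}-y^t\|$; the term $\tfrac{1}{\alpha}\|y^t-y^{t+1}\|$ is $\mO(\epsilon)$ by the same estimate as above; and $\|y^t-v^t\|$ is controlled in complete analogy with $\|x^t-z^t\|$ by using $v^{t+1}-v^t=\mu(y^{t+1}-v^t)$, which yields $\|y^{t+1}-v^t\|\le\epsilon$ and thus $\|y^t-v^t\|\le\|y^{t+1}-y^t\|+\epsilon$. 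This produces an overall bound $\rho_2\epsilon$, and setting $\rho:=\max\{\rho_1,\rho_2\}$ completes the argument. There is no real obstacle; the only care needed is in bookkeeping the chain that converts the assumed smallness of $\|x^t-x^{t+1}\|/c$ into smallness of $\|y^{t+1}-y^t\|$ via Lemma \ref{lem:5}(iv), so that the cross term $\nabla_x f(x^{t+1},y^{t+1})-\nabla_x f(x^t,y^t)$ (and its $y$-analogue) does not escape our control.
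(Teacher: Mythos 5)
Your proposal is correct and follows essentially the same route as the paper's own proof: you extract the same subgradient candidates from the first-order optimality conditions of the two projection steps, convert $\|x^t-z^t\|$ and $\|y^t-v^t\|$ into the assumed-small quantities via the averaging updates, and use Lemma \ref{lem:5}(iv) to transfer control from $\|y^t-y_+^t(z^t,v^t)\|$ and $\|x^t-x^{t+1}\|$ to $\|y^{t+1}-y^t\|$. The bookkeeping of constants matches the paper's, so no further changes are needed.
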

	\begin{proof}
     In accordance with definition \ref{def:1}, we just need to evaluate the two terms $\dist(\z,\nabla_{x}f(x^{t+1},y^{t+1})+\partial\b1_\X(x^{t+1}))$ and $\dist(\z,-\nabla_y f(x^{t+1},y^{t+1})+\partial \b1_{\Y}(y^{t+1}))$. We first note that
     \[
     x^{t+1}=\proj_{\X} (x^t-c\nabla_x F(x^t,y^t,z^t,v^t))=\argmin_{x\in \X} \left\{\frac{1}{2}\|x-x^t+c\nabla_x F(x^t,y^t,z^t,v^t)\|^2\right\}.
     \]
     Its optimality condition yields  
     \[
         \z \in x^{t+1}-x^t+c\nabla_x f(x^t,y^t)+cr_1(x^t-z^t)+\partial \b1_{\X} (x^{t+1}),
     \]
    which implies that  
    \[
        \frac{1}{c}\left(x^t-x^{t+1}\right)-r_1(x^t-z^t)+\nabla_x f(x^{t+1},y^{t+1})-\nabla_x f(x^t,y^t)\in \nabla_x f(x^{t+1},y^{t+1})+\partial \b1_{\X}(x^{t+1}).
    \]
    Then, we are ready to simplify the stationarity measure as follows:
		\[
		\begin{aligned}
			&\dist(\z,\nabla_{x}f(x^{t+1},y^{t+1})+\partial\b1_\X(x^{t+1}))\\ 
			\leq\ &  \left(\frac{1}{c}+L_x\right)\|x^t-x^{t+1}\|+r_1\|x^t-z^t\|+L_x\|y^t-y^{t+1}\|\\
   \leq\ &  \left(\frac{1}{c}+L_x+r_1+L_xL_y\alpha \sigma_6\right)\|x^t-x^{t+1}\|+\frac{r_2}{\beta}\|z^t-z^{t+1}\|+L_x\|y^t-y_+^t(z^t,v^t)\|\\
   \leq\ & \frac{r_1-L_x+r_1^2-L_x^2+(2r_1+1)L_xL_y}{c(r_1-L_x)}\|x^t-x^{t+1}\|+\frac{r_2}{\beta}\|z^t-z^{t+1}\|+\frac{L_x}{\alpha}\|y^t-y_+^t(z^t,v^t)\|.
		\end{aligned}
		\]
		The last inequality is because $\max\left\{\frac{\|x^t-x^{t+1}\|}{c},\frac{\|y^t-y_+^{t}(z^t,v^t)\|}{\alpha},\frac{\|z^t-z^{t+1}\|}{\beta}\right\}\leq \epsilon$.
		Similarly, we have
  \[
  \frac{1}{\alpha}(y^t-y^{t+1})-r_2(y^t-v^t)+\nabla_y f(x^{t+1},y^t)-\nabla_y f(x^{t+1},y^{t+1})\in -\nabla_y f(x^{t+1},y^{t+1})+\partial \b1_{\Y} (y^{t+1}),
  \]
  which can be directly derived from the update of $y^{t+1}$. Then, we can simplify the expression 
		\[
		\begin{aligned}
		  &\dist(\z,-\nabla_y f(x^{t+1},y^{t+1})+\partial \b1_{\Y}(y^{t+1})) \\
		  \leq\ & \left(\frac{1}{\alpha}+L_y\right)\|y^t-y^{t+1}\|+r_2\|y^t-v^t\|\\
    \leq\ & \left(\frac{1}{\alpha}+L_y+r_2\right)\|y^t-y_+^t(z^t,v^t)\|+\left(\frac{1}{\alpha}+L_y+r_2\right)L_y\alpha\sigma_6\|x^t-x^{t+1}\|+\frac{r_2}{\mu}\|v^t-v^{t+1}\|\\
    \leq\ & \frac{1+L_y+r_2}{\alpha}\|y^t-y_+^t(z^t,v^t)\|+\frac{L_y(1+L_y+r_2)(2r_1+1)}{c(r_1-L_x)}\|x^t-x^{t+1}\|+\frac{r_2}{\mu}\|v^t-v^{t+1}\|.
		\end{aligned}
		\]
  The last inequality is due to $\max\left\{\frac{\|x^t-x^{t+1}\|}{c},\frac{\|y^t-y_+^{t}(z^t,v^t)\|}{\alpha},\frac{\|v^t-v^{t+1}\|}{\mu}\right\}\leq \epsilon$. 
		The proof is complete.
	\end{proof}
 
	{\noindent\bf Proof of Theorem~\ref{thm:6}} \, 
 Firstly, it is easy to check that $\Phi(x,y,z,v)$ is lower bounded by $\Bar{F}$. Let 
	\[\zeta:=\max \left\{\frac{r_1}{32}\|x^{t+1}-x^t\|^2,\frac{r_2}{15}\|y^t-y^t_{+}(z^t,v^t)\|^2,\frac{r_1}{5\beta}\|z^t-z^{t+1}\|^2,\frac{r_2}{4\mu}\|v_{+}^t(z^{t+1})-v^t\|^2\ \right\}.
	\] 
	Then, we consider the following two cases separately:
	\begin{itemize}
		\item There exists $t \in \{0,1,\ldots,T-1\}$ such that
		\begin{equation*}
			\frac{1}{2}\zeta
			\leq 
			4r_1 \beta\|x(z^{t+1},v(z^{t+1}))-x(z^{t+1},v_+^t(z^{t+1}))\|^2.
		\end{equation*}
		\item For any $t \in \{0,1,\ldots,T-1\}$, we have
		\begin{equation*}
			\frac{1}{2}\zeta
			\geq 
			4r_1 \beta\|x(z^{t+1},v(z^{t+1}))-x(z^{t+1},v_+^t(z^{t+1}))\|^2.
		\end{equation*}
	\end{itemize}
	We first consider the first case. If the K\L\ exponent $\theta \in (\frac{1}{2},1)$, then from Proposition \ref{prop:NC_KL},
	\begin{equation*}
		\begin{aligned}
			\|v_{+}^t(z^{t+1})-v^t\|^2&\leq \frac{32r_1\mu\beta}{r_2}\|x(z^{t+1},v(z^{t+1}))-x(z^{t+1},v_+^t(z^{t+1}))\|^2\\
			&\leq \frac{32r_1\mu\beta\omega_0}{r_2}\|v_{+}^t(z^{t+1})-v^t\|^{\frac{1}{\theta}}.
		\end{aligned}
	\end{equation*}
	Then, we have $\|v_{+}^t(z^{t+1})-v^t\|\leq \rho_1 \beta^{\frac{\theta}{2\theta-1}}$, where $\rho_1:= \left( \frac{32r_1\mu\omega_0}{r_2}\right)^{\frac{\theta}{2\theta-1}}$. Armed with this, we can bound other terms as follows:
	\begin{align*}
			&\frac{\|x^{t+1}-x^t\|^2}{c^2}\leq \frac{256\beta}{c^2}\|x(z^{t+1},v(z^{t+1}))-x(z^{t+1},v_+^t(z^{t+1}))\|^2\\ 
   &\ \ \ \ \ \ \, \ \ \  \ \ \ \ \ \ \ \ \ \ \ \ \ \, \leq \frac{256\beta\omega_0}{c^2}\|v_{+}^t(z^{t+1})-v^t\|^{\frac{1}{\theta}}=\rho_2\beta^{\frac{2\theta}{2\theta-1}},\\
           &\frac{\|y^t-y_{+}^t(z^t,v^t)\|^2}{\alpha^2}\leq \frac{120r_1\beta}{r_2\alpha^2}\|x(z^{t+1},v(z^{t+1}))-x(z^{t+1},v_+^t(z^{t+1}))\|^2\\  
           &\ \ \ \ \ \ \, \ \ \ \ \ \ \ \ \ \ \ \ \ \ \ \ \ \ \ \ \ \ \, \, \ \, \leq \frac{120r_1\beta\omega_0}{r_2\alpha^2}\|v_{+}^t(z^{t+1})-v^t\|^{\frac{1}{\theta}}=\rho_3\beta^{\frac{2\theta}{2\theta-1}},\\
           &\frac{\|z^{t+1}-z^t\|^2}{\beta^2}\leq 40\|x(z^{t+1},v(z^{t+1}))-x(z^{t+1},v_+^t(z^{t+1}))\|^2\leq 40\omega_0\|z_{+}^t(v^{t+1})-z^t\|^{\frac{1}{\theta}}=\rho_4\beta^{\frac{1}{2\theta-1}},\\
			&\frac{\|v^{t+1}-v^t\|^2}{\mu^2}\leq \frac{2\|v_{+}^t(z^{t+1})-v^t\|^2}{\mu^2}+2\left(4L_y^2\alpha^2\sigma_6^2\|x^t-x^{t+1}\|^2+4(\sigma_8+1)^2\|y^t-y_+^t(z^t,v^t)\|^2+\right.\\
   &\ \ \ \ \ \ \ \ \ \ \ \ \ \ \ \ \ \ \ \ \quad \quad \left.2\sigma_3^2\|z^t-z^{t+1}\|^2\right)\\
			&\ \ \ \ \ \ \ \ \ \ \ \ \ \ \ \ \ \ \ \ \ \, \, \, \leq 
   \frac{2\rho_1^2}{\mu^2}\beta^{\frac{2\theta}{2\theta-1}}+8L_y^2\alpha^2\sigma_6^2c^2\rho_2\beta^{\frac{2\theta}{2\theta-1}}+8(\sigma_8+1)^2\alpha^2\rho_3\beta^{\frac{2\theta}{2\theta-1}}+4\sigma_3^2\rho_3\beta^{\frac{4\theta-1}{2\theta-1}}\\
   &\ \ \ \ \ \ \ \ \ \ \ \ \ \ \ \ \ \ \ \ \ \, \, \, \leq \rho_5\beta^{\frac{2\theta}{2\theta-1}},
		\end{align*}
	where $\rho_2:=\frac{256\omega_0}{c^2}\rho_1^{\frac{1}{\theta}}$, $\rho_3:=\frac{120r_1\omega_0}{r_2\alpha^2}\rho_1^{\frac{1}{\theta}}$, $\rho_4:=40\omega_0\rho_1^{\frac{1}{\theta}}$ and $\rho_5:=\frac{2\rho_1^2}{\mu^2}+8L_y^2\alpha^2\sigma_6^2c^2\rho_2+8(\sigma_8+1)^2\alpha^2\rho_3+4\sigma_3^2\rho_3$. According to Lemma \ref{lem:14}, there exists a $\rho>0$ such that $(x^{t+1},y^{t+1})$ is a $\rho \epsilon$-GS, where $\epsilon=\max\{\sqrt{\rho_2}\beta^{\frac{\theta}{2\theta-1}},\sqrt{\rho_3}\beta^{\frac{\theta}{2\theta-1}},\sqrt{\rho_4}\beta^{\frac{\theta}{2\theta-1}},\sqrt{\rho_5}\beta^{\frac{1}{4\theta-2}}\}$. For the general concave case, the above analysis holds with $\theta=1$ and $(x^{t+1},y^{t+1})$ is a $\rho\max\{\sqrt{\rho_2}\beta\sqrt{\rho_4}\beta,\sqrt{\rho_5}\beta,\sqrt{\rho_3}\beta^{\frac{1}{2}}\}$-GS by replacing $\omega_0$ with $\omega_1$.
	
 One remaining thing is to prove that $z^{t+1}$ is an $\mO(\beta^{\frac{1}{4\theta-2}})$-OS. By the dual error bounds in Proposition \ref{prop:dual_eb_kl} and error bounds in Lemmas \ref{lem:2} and \ref{lem:5}, we have
        \begin{align}\label{eq:z_os}
      &\|z^{t+1}-x^*(z^{t+1})\|\notag\\
      \leq\ & \|z^{t+1}-z^t\|+ \|z^t-x^{t+1}\|+\|x^{t+1}-x(y^t,z^t,v^t)\|+\|x(y^t,z^t,v^t)-x(z^t,v^t)\|\ +\notag\\
      &\|x(z^t,v^t)-x(z^{t+1},v^t)\|+\|x(z^{t+1},v^t)-x^*(z^{t+1})\|\notag\\
      \leq\ &  (1+\sigma_2)\|z^{t+1}-z^t\|+\frac{\|z^t-z^{t+1}\|}{\beta}+\sigma_6\|x^t-x^{t+1}\|+\sigma_1\sigma_8\|y^t-y_+^t(z^t,v^t)\|\ +\notag\\
      &\omega_2\|v^t-y(z^{t+1},v^t)\|^{\frac{1}{2\theta}}\notag\\
      \leq\ & (1+\sigma_2)\sqrt{\rho_4}\beta^{\frac{4\theta-1}{4\theta-2}}+ \sqrt{\rho_4}\beta^{\frac{1}{4\theta-2}}+\left(\sigma_6\sqrt{\rho_2} +\sigma_1\sigma_8\sqrt{\rho_3}\right)\beta^{\frac{\theta}{2\theta-1}}+\omega_2\rho_1\beta^{\frac{1}{4\theta-2}}\notag\\
      =\ &\mO(\beta^{\frac{1}{4\theta-2}}).
  \end{align}

 Now, we consider the second case. Since
	\[
	\begin{aligned}
		&\Phi(x^t,y^t,z^t,v^t)-\Phi(x^{t+1},y^{t+1},z^{t+1},v^{t+1})\\
		\geq\ &\frac{r_1}{64}\|x^{t+1}-x^t\|^2+\frac{r_2}{30}\|y^t-y^t_{+}(z^t,v^t)\|^2+\frac{r_1}{10\beta}\|z^t-z^{t+1}\|^2+\frac{r_2}{8\mu}\|v_{+}^t(z^{t+1})-v^t\|^2
	\end{aligned}
 \]
	holds for $t\in\{0,1,\ldots,T-1\}$ and $\Phi(x,y,z,v)\geq \bar{F}$, we know that
	\begin{equation*}
		\begin{aligned}
			&\Phi(x^0,y^0,z^0,v^0)-\bar{F}\\
			\geq\ & \sum_{T=0}^{T-1}\frac{r_1}{64}\|x^{t+1}-x^t\|^2+\frac{r_2}{30}\|y^t-y^t_{+}(z^t,v^t)\|^2+\frac{r_1}{10\beta}\|z^t-z^{t+1}\|^2+\frac{r_2}{8\mu}\|v_{+}^t(z^{t+1})-v^t\|^2\\
			\geq\ & T \min\left\{\frac{r_1c^2}{64},\frac{r_2\alpha^2}{30},\frac{r_1}{10},\frac{r_2\mu}{8}\right\}\left(\frac{\|x^{t+1}-x^t\|^2}{c^2}+\frac{\|y^t-y^t_{+}(z^t,v^t)\|^2}{\alpha^2}
   \right)\ +\\
		    &    T \min\left\{\frac{r_1c^2}{64},\frac{r_2\alpha^2}{30},\frac{r_1}{10},\frac{r_2\mu}{8}\right\}\left(\frac{\|z^t-z^{t+1}\|^2}{\beta}+\frac{\|v_{+}^t(z^{t+1})-v^t\|^2}{\mu^2}\right).\\
		\end{aligned}
	\end{equation*}
	Since $\Phi(x,y,z,v)\geq\bar{F}$, there exists a $t\in\{0,1,\ldots,T-1\}$ such that
	\[
	\begin{aligned}
		&\max\left\{\frac{\|x^t-x^{t+1}\|^2}{c^2},\frac{\|y^t-y_+^{t}(z^t,v^t)\|^2}{\alpha^2},\frac{\|z^t-z^{t+1}\|^2}{\beta},\frac{\|v^t-v^{t}_+(z^{t+1})\|^2}{\mu^2}\right\}\\
		\leq\ & \frac{\Phi(x^0,y^0,z^0,v^0)-\bar{F}}{ T \min\left\{\frac{r_1c^2}{64},\frac{r_2\alpha^2}{30},\frac{r_1}{10},\frac{r_2\mu}{8}\right\}}=: \frac{\eta}{T}.
	\end{aligned}
	\]
	Note that
 \[
	\begin{aligned}
	    \frac{\|v^{t+1}-v^t\|^2}{\mu^2}&\leq \frac{2\|v_{+}^t(z^{t+1})-v^t\|^2}{\mu^2}+2\left(4L_y^2\alpha^2\sigma_6^2\|x^t-x^{t+1}\|^2+4(\sigma_8+1)^2\|y^t-y_+^t(z^t,v^t)\|^2+\right.\\
      &\quad \ \left.2\sigma_3^2\|z^t-z^{t+1}\|^2\right)\leq \frac{\eta\left(2+8L_y^2\alpha^2\sigma_6^2c^2+8(\sigma_8+1)^2\alpha^2+4\sigma_3^2\beta\right)}{T}\leq \frac{\rho_6}{T},
	\end{aligned}
 \]
 where $\rho_6=\eta\left(2+8L_y^2\alpha^2\sigma_6^2c^2+8(\sigma_8+1)^2\alpha^2+4\sigma_3^2\right)$. Thus, we know there exists a $\rho>0$ such that $(x^{t+1},y^{t+1})$ is a $\rho\epsilon$-GS, where $\epsilon=\sqrt{\frac{\rho_6}{T\beta}}$. Similar argument as \eqref{eq:z_os} shows that $z^{t+1}$ is an $\mO(\frac{1}{\sqrt{T
 \beta}})$-OS. Moreover, if we choose $\beta \leq \mO(T^{-\frac{2\theta-1}{2\theta}})$ when the K\L\ exponent $\theta\in (\frac{1}{2},1)$ and $\beta=\mO(T^{-\frac{1}{2}})$ for general concave case, the two cases coincide. We conclude that under K\L\ assumption, $(x^{t+1},y^{t+1})$ is an $\mO(T^{-\frac{1}{4\theta}})$-GS and $z^{t+1}$ is an $\mO(T^{-\frac{1}{4\theta}})$-OS. When the dual function is concave, $(x^{t+1},y^{t+1})$ is an $\mO(T^{-\frac{1}{4}})$-GS and $z^{t+1}$ is an $\mO(T^{-\frac{1}{4}})$-OS. 
  
 Next, suppose that the K\L\ exponent $\theta \in (0,\frac{1}{2}]$. From Proposition \ref{prop:NC_KL}, we obtain  
 \begin{equation}\label{eq:18}
      \begin{aligned}
     \|x(z^{t+1},v(z^{t+1}))-x(z^{t+1},v_+^t(z^{t+1}))\|^2 &\leq \omega_0\|v_{+}^t(z^{t+1})-v^t\|^{\frac{1}{\theta}}\\
     &\leq \omega_0 \left(2\diam(\Y)\right)^{\frac{1}{\theta}-2}\|v_{+}^t(z^{t+1})-v^t\|^2.
 \end{aligned}
 \end{equation}
 The last inequality is from $\|v_{+}^t(z^{t+1})-v^t\|=\beta \|y(z^{t+1},v^{t})-v^t\|\leq 2\diam(\Y)$ since $\|v^t\|\leq \diam(\Y)$. Armed with the homogeneous proximal error bound \eqref{eq:18}, we have
\[
	\begin{aligned}
		&\Phi(x^t,y^t,z^t,v^t)-\Phi(x^{t+1},y^{t+1},z^{t+1},v^{t+1})\\
		\geq\ &\frac{r_1}{32}\|x^{t+1}-x^t\|^2+\frac{r_2}{15}\|y^t-y^t_{+}(z^t,v^t)\|^2+\frac{r_1}{5\beta}\|z^t-z^{t+1}\|^2+\frac{r_2}{4\mu}\|v_{+}^t(z^{t+1})-v^t\|^2\ -\\
   &4r_1 \beta\omega_0 \left(2\diam(\Y)\right)^{\frac{1}{\theta}-2}\|v_{+}^t(z^{t+1})-v^t\|^2\\
  \geq\ & \frac{r_1}{32}\|x^{t+1}-x^t\|^2+\frac{r_2}{15}\|y^t-y^t_{+}(z^t,v^t)\|^2+\frac{r_1}{5\beta}\|z^t-z^{t+1}\|^2+\ \\
  &\left(\frac{r_2}{4\mu}-4r_1 \beta\omega_0 \left(2\diam(\Y)\right)^{\frac{1}{\theta}-2}\right)\|v_{+}^t(z^{t+1})-v^t\|^2\\
   \geq\ & \frac{r_1}{32}\|x^{t+1}-x^t\|^2+\frac{r_2}{15}\|y^t-y^t_{+}(z^t,v^t)\|^2+\frac{r_1}{5\beta}\|z^t-z^{t+1}\|^2++\frac{r_2}{8\mu}\|v_{+}^t(z^{t+1})-v^t\|^2,
	\end{aligned}
	\]
  where the last inequality is because $\beta \leq \frac{ r_2}{32r_1 \mu\omega_0\left(2\diam(\Y)\right)^{\frac{1}{\theta}-2}}$. The rest of the proof is similar to that in the second case when $\theta\in(\frac{1}{2},1)$, and we conclude that $(x^{t+1},y^{t+1})$ is a $\mO(T^{-\frac{1}{2}})$-GS and $z^{t+1}$ is an $\mO(T^{-\frac{1}{2}})$-OS.

   \section{Convergence Anaylsis for (Non)convex-Nonconcave Problems}\label{APP:KL_NC}
 Similar convergence results for (non)convex-nonconcave problems can be established with the symmetric property of our Lyapunov function. Specifically, we can derive the following Proposition \ref{prop:suff_dec_2} by doing an easy transformation of $(x,z,r_1,c,\beta, _x)$ to $(y,v,r_2,\alpha,\mu, L_y)$: 
     \begin{prop}[Basic descent estimate]\label{prop:suff_dec_2}
	Suppose that Assumption \ref{ass:1} holds and $r_1\geq 2L$, $r_2\geq 2\lambda L$ with the parameters\\
  \[
  \begin{aligned}
  0&<\alpha \leq \min\left \{\frac{4}{3(\lambda L+r_2)},\frac{1}{6L}\right \}, 0<c\leq  \min\left\{\frac{2}{3 L\sigma'^2}, \frac{1}{6L_h}, \frac{1}{5 \sqrt{\lambda+5}L}\right\} \\
  0 &< \mu \leq \min\left \{\frac{24r_2}{360r_2+5r_2^2\lambda+(2 L+5r_2)^2},\frac{c L^2}{384r_2(\lambda+5)(\lambda+1)^2}\right\}, \\
  0&<\beta \leq \min\left \{\frac{2(\lambda+5)}{2(\lambda+5)+ L^2},\frac{cL^2}{64r_1(\lambda+5)}\right\}.    
  \end{aligned}
  \]
		Then, for any $t\geq 0$,
	\[
	\begin{aligned}
		\Psi^t-\Psi^{t+1}\geq\ & \frac{r_2}{32}\|x^{t}-x_+^t(z^t,v^t)\|^2+\frac{r_1}{15}\|y^t-y^{t+1}\|^2+\frac{r_2}{5\mu}\|v^t-v^{t+1}\|^2+\frac{r_1}{4\beta}\|z_{+}^t(v^{t+1})-z^t\|^2\\
   &-4r_2\mu\|y(z(v^{t+1}),v^{t+1})-y(z_+^t(v^{t+1}),v^{t+1})\|^2,   	
   \end{aligned}
	\]
	where $\sigma'\coloneqq\frac{2\alpha r_2+1}{\alpha(r_2-\lambda L)}$ and $L_h\coloneqq\left(\frac{L}{r_2-\lambda L}+2\right)L+r_1$. Moreover, we set
	$x_{+}(z,v):=\proj_{\X}(z+c\nabla_{x}F(x,y(x,z,v),z,v))$ and 
$z_{+}(v):=z+\mu(x(y(z,v),z,v)-z)$ with the following definitions: 
    {\rm (i)} $y(z,v):=\argmin_{y\in \Y} \max_{x \in \X} F(x,y,z,v)$, {\rm (ii)} $z(v):=\argmin_{z\in \R^n} P(z,v)$. 
  \end{prop}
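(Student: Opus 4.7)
The proof will mirror the three-step descent structure used for Proposition~\ref{prop:suff_dec_1}, exchanging the roles of the primal and dual variables. Writing out the new Lyapunov function as $\Psi(x,y,z,v) = 2h(x,z,v) - F(x,y,z,v) - 2g(v) + \overline{F}$, one has $\Psi^t-\Psi^{t+1} = [F^{t+1}-F^t] + 2[h^t-h^{t+1}] + 2[g^{t+1}-g^t]$. Under the formal swap $(x,z,r_1,c,\beta,L_x)\leftrightarrow (y,v,r_2,\alpha,\mu,L_y)$ together with $f\mapsto -f$, the quantities $(d(y,z,v),q(z))$ correspond to $(-h(x,z,v),-g(v))$, so $\Psi$ formally plays the role of $\Phi$ in the transformed problem. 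This symmetry dictates the entire proof.

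First, I would establish analogs of Lemmas~\ref{lem:6}--\ref{lem:8}: (i) a primal descent estimate for $F$ identical to Lemma~\ref{lem:6}; (ii) a ``dual ascent'' estimate for $h$, using the $L_h$-smoothness of $h(\cdot,z,v)$ (symmetric analog of Lemma~\ref{lem:3}) together with the projection inequality for the $x$-update, to lower bound $h^{t+1}-h^t$ in terms of $\|x^t-x_+^t(z^t,v^t)\|^2$ plus cross terms from the $y,z,v$ updates; and (iii) a ``proximal'' estimate for $g(v)=\min_z p(z,v)$ via Sion's minimax theorem, mirroring Lemma~\ref{lem:8} with the prox variable $z$ now replaced by $v$. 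Summing the three pieces yields a preliminary descent estimate containing a negative error term $-4r_2\mu\,\|y(z(v^{t+1}),v^{t+1})-y(z_+^t(v^{t+1}),v^{t+1})\|^2$, which is the symmetric counterpart of the negative term appearing in Proposition~\ref{prop:suff_dec_1}.

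Next, I would bound the cross terms and the negative error term via the symmetric counterparts of Lemmas~\ref{lem:2} and~\ref{lem:5}: the Lipschitz estimates for $y(z,\cdot)$ and $z(\cdot)$, the primal error bound $\|y^{t+1}-y(x^{t+1},z^t,v^t)\|\leq \sigma_7\|y^{t+1}-y^t\|$, and a decomposition of $\|y^{t+1}-y(z_+^t(v^{t+1}),v^{t+1})\|^2$ into the four quadratic quantities on the right-hand side of the claimed inequality, mirroring the chain~\eqref{eq:21}--\eqref{eq:24}. The stated parameter bounds on $(\alpha,c,\beta,\mu,r_1,r_2)$ are then precisely those that absorb all cross terms and leave the required positive coefficients $r_2/32$, $r_1/15$, $r_2/(5\mu)$, $r_1/(4\beta)$ on the four squared-norm quantities.

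The main obstacle is that DS-GDA is Gauss--Seidel: the $y$-update uses the fresh iterate $x^{t+1}$, so a literal variable swap does not convert the proof of Proposition~\ref{prop:suff_dec_1} into the proof of Proposition~\ref{prop:suff_dec_2} as a black box; the correspondingly swapped algorithm would update $y$ first using $x^t$ and then $x$ using $y^{t+1}$. Consequently, the dual ascent lemma for $h$ must be derived directly, and the coupling inner product $\langle \nabla_x F(x^{t+1},y^{t+1},z^t,v^t), x^{t+1}-x_+^t(z^t,v^t)\rangle$ has to be controlled through an additional error-bound splitting (playing the role of the $\sigma_6^2$ term in the original proof). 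Verifying that all Cauchy--Schwarz and AM--GM estimates leave the four target coefficients strictly positive under the stated parameter choices is the most delicate bookkeeping; it reprises the bullet-point verification at the end of Section~\ref{APP:D} with the roles of primal and dual interchanged.
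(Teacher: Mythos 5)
Your proposal follows essentially the same route as the paper: the paper's entire justification for Proposition~\ref{prop:suff_dec_2} is the one-line remark in Appendix~\ref{APP:KL_NC} that it follows from Proposition~\ref{prop:suff_dec_1} ``by doing an easy transformation of $(x,z,r_1,c,\beta,L_x)$ to $(y,v,r_2,\alpha,\mu,L_y)$,'' and your plan is exactly that symmetry argument, fleshed out into the three-lemma structure (analogues of Lemmas~\ref{lem:6}--\ref{lem:8}, the $L_h$-smoothness analogue of Lemma~\ref{lem:3}, and the mirrored error bounds). Where you go beyond the paper is in flagging that the transformation is not literally a black box: Algorithm~\ref{alg:1} is Gauss--Seidel, updating $x$ first with the stale $y^t$ and $y$ second with the fresh $x^{t+1}$, whereas the formally swapped algorithm would do the opposite; this is visible in the statement itself, where the roles of ``actual iterate difference'' and ``virtual update via the exact inner solution'' are carried by $\|y^t-y^{t+1}\|$ and $\|x^t-x_+^t(z^t,v^t)\|$ rather than by their counterparts in Proposition~\ref{prop:suff_dec_1}. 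Your proposed fix --- re-deriving the $h$-descent lemma directly and absorbing the resulting coupling inner product via an extra error-bound splitting playing the role of the $\sigma_6^2$ term --- is the right way to close that gap, and it is a gap the paper itself never acknowledges. Two small cautions: the estimate you label ``dual ascent for $h$'' must lower bound $h^t-h^{t+1}$ (not $h^{t+1}-h^t$ as written), since $h$ enters $\Psi$ with a $+2$ coefficient and the $x$-step is a descent step on $h(\cdot,z,v)$; and the final bookkeeping that the stated parameter ranges leave the four coefficients $r_2/32$, $r_1/15$, $r_2/(5\mu)$, $r_1/(4\beta)$ positive is asserted rather than verified, though that is no worse than what the paper provides.
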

  
   To handle the negative term in the basic decrease estimate (Proposition \ref{prop:suff_dec_2}), we would impose the following assumptions on primal function:  
  \begin{ass} [K\L\ property for primal function]\label{ass:4}
		For any fixed point $x\in \X, z \in \R^n$, and $v \in \R^d$, the problem $\min_{x \in \X}F(x,y,z,v)$ has a nonempty solution set and a finite optimal value. Moreover, there exist $\tau_1>0$, $\theta_1 \in (0,1)$, and $\delta_1>0$, such that
		\[
				\left(f(x,y)-\min_{x' \in\X} f(x',y)\right)^{\theta_1} \leq \frac{1}{\tau_1}\dist(\z,\nabla_{x}f(x,y)+\partial\b1_\X(y))
		\]
		holds for any $y \in \Y$, $z\in \R^n$, and $v\in \R^d$.
		\end{ass}
  \begin{ass} [Convexity of the primal function] \label{ass:5} For any fixed point $y\in \Y$, $f(\cdot,y)$ is convex. 
  \end{ass}
  \begin{prop}[Proximal error bound]\label{prop:KL_NC} Under the setting of Proposition \ref{prop:suff_dec_2} with Assumption \ref{ass:4} or \ref{ass:5}, for any $z \in \R^n, v \in \R^d$, we have\\
  {\rm (i) K\L{} exponent $\theta \in (0,1)$:} 
  \[
		 \|y(z_+^t(v^{t+1}),v^{t+1})-y(z(v^{t+1}),v^{t+1})\|^2\leq \omega_2\|z_+^t(v^{t+1})-z^t\|^{\frac{1}{\theta_1}};
		\]
  {\rm (ii) Convex:}
  \[
		\|y(z_+^t(v^{t+1}),v^{t+1})-y(z(v^{t+1}),v^{t+1})\|^2 \leq \omega_3\|z_+^t(v^{t+1})-z^t\|,
		\]
		where 
  $\omega_2:=\frac{2}{(r_2- L_y)\tau_1}\left(\frac{r_1(1-\beta)}{\beta}+\frac{r_1^2}{r_1-L_x}\right)^{\frac{1}{\theta_1}}$ and 
  $\omega_3 := \frac{4r_1\diam(\X)}{r_2-L_y}\left(\frac{1-\beta}{\beta}+\sigma_2\right)$. Here, $\diam(\X)$ denotes the diameter of the set $\X$.
	\end{prop}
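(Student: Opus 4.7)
The proof runs parallel to that of Proposition \ref{prop:NC_KL}, exploiting the symmetry between the Lyapunov functions $\Phi$ and $\Psi$ under the swap $(x,z,r_1,L_x)\leftrightarrow(y,v,r_2,L_y)$ and $\min\leftrightarrow\max$. Under this correspondence, the role of $h(x,z,v)=\max_{y\in\Y}F(x,y,z,v)$ in the original argument is now played by $d(y,z,v)=\min_{x\in\X}F(x,y,z,v)$, the outer variable $v(z)=\argmax_v p(z,v)$ is replaced by $z(v)=\argmin_z p(z,v)$, and the target quantity becomes $\|y(z_+^t(v^{t+1}),v^{t+1})-y(z(v^{t+1}),v^{t+1})\|$.

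The plan is to sandwich this quantity between two estimates, with the hypothesis on the primal side playing the role that the dual-side hypothesis played in Proposition \ref{prop:NC_KL}. Since $d(\cdot,z,v)$ is $(r_2-L_y)$-strongly concave in $y$ by Lemma \ref{lem:1}, and $y(z(v^{t+1}),v^{t+1})$ is its maximizer at $z=z(v^{t+1})$, I obtain the lower bound
\[
d(y(z(v^{t+1}),v^{t+1}),z(v^{t+1}),v^{t+1})-d(\bar y,z(v^{t+1}),v^{t+1})\ \geq\ \tfrac{r_2-L_y}{2}\|\bar y-y(z(v^{t+1}),v^{t+1})\|^2,
\]
where $\bar y:=y(z_+^t(v^{t+1}),v^{t+1})$. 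Using the optimality $p(z(v^{t+1}),v^{t+1})\leq p(z_+^t(v^{t+1}),v^{t+1})$ together with the identity $p(z,v)=d(y(z,v),z,v)$, the left-hand side is dominated by the pure $z$-gap $d(\bar y,z_+^t(v^{t+1}),v^{t+1})-d(\bar y,z(v^{t+1}),v^{t+1})$.

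It remains to upper bound this last gap under each primal hypothesis. Under Assumption \ref{ass:4}, I would invoke Theorem 5.2 of \cite{yu2022kurdyka} to transfer the K\L{} property from $f(\cdot,y)$ to $d(y,\cdot,v)$ in the $z$-variable with the same exponent $\theta_1$, yielding
\[
d(\bar y,z_+^t(v^{t+1}),v^{t+1})-\min_z d(\bar y,z,v^{t+1})\ \leq\ \tfrac{1}{\tau_1}\|\nabla_z d(\bar y,z_+^t(v^{t+1}),v^{t+1})\|^{1/\theta_1}.
\]
Danskin's theorem gives $\nabla_z d(y,z,v)=r_1(z-x(y,z,v))$, and expanding $z_+^t(v^{t+1})=(1-\beta)z^t+\beta x(z^t,v^{t+1})$ together with bound (iii) of Lemma \ref{lem:2} for $x(\cdot,v)$ yields $\|\nabla_z d\|\leq r_1((1-\beta)/\beta+\sigma_2)\|z_+^t(v^{t+1})-z^t\|$, producing part (i). Under Assumption \ref{ass:5}, $d(y,\cdot,v)$ is, up to the additive term $-\tfrac{r_2}{2}\|y-v\|^2$, a Moreau envelope of the convex function $f(\cdot,y)$ and hence convex in $z$; the first-order convex inequality combined with a prox-interpretation argument mirroring the $\|v(z)\|\leq\diam(\Y)$ bound in Proposition \ref{prop:NC_KL} (so that $\|z(v^{t+1})\|\leq\diam(\X)$ and $\|z_+^t(v^{t+1})-z(v^{t+1})\|\leq 2\diam(\X)$) then yields part (ii). The main obstacle is cleanly justifying the K\L{} transfer from $f$ in $x$ to $d$ in $z$ with matching exponent under partial minimization with the quadratic coupling; once this preservation is in hand, the rest of the derivation is mechanical by symmetry with Proposition \ref{prop:NC_KL}.
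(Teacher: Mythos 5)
Your proposal is correct and matches the paper's intended argument: the paper gives no separate proof of Proposition \ref{prop:KL_NC}, deriving it purely by the symmetry swap $(x,z,r_1,L_x,\beta)\leftrightarrow(y,v,r_2,L_y,\mu)$ applied to the proof of Proposition \ref{prop:NC_KL}, and your write-up is a faithful expansion of exactly that mirror — strong concavity of $d(\cdot,z,v)$ in place of strong convexity of $h(\cdot,z,v)$, optimality of $z(v)$ for $p(\cdot,v)$ in place of optimality of $v(z)$ for $p(z,\cdot)$, and Danskin plus Lemma \ref{lem:2}(iii) to bound $\|\nabla_z d\|$, reproducing the stated $\omega_2$ and $\omega_3$. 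The K\L{}-transfer step you flag as the main obstacle is handled by the same citation the paper uses for the dual side (\citep[Theorem 5.2]{yu2022kurdyka}), so no additional idea is needed.
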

 Equipped with these results, we could also derive similar complexity results for (non)convex-nonconcave minimax problems as Theorem \ref{thm:6}. 

	\section{Proof of Theorem \ref{thm:7}}\label{APP:G}
 In general, analyzing the local convergence rate of sequences could be challenging. Fortunately, when $\theta\in (0,\frac{1}{2}]$, the homogeneous sufficient descent property holds as follows:
 \[
 \begin{aligned}
     &\Phi(x^t,y^t,z^t,v^t)-\Phi(x^{t+1},y^{t+1},z^{t+1},v^{t+1})\\
   \geq\ & \frac{r_1}{40}\|x^{t+1}-x^t\|^2+\frac{r_2}{20}\|y^t-y^t_{+}(z^t,v^t)\|^2+\frac{r_1}{4\beta}\|z^t-z^{t+1}\|^2+\frac{6r_2}{25\mu}\|v_{+}^t(z^{t+1})-v^t\|^2.
 \end{aligned}
 \]
 Consequently, we can use the unified analysis framework in \citep{attouch2010proximal}. Next, we prove the ``Safeguard'' property as required there. 
 \begin{prop}[``Safeguard'' property]
 There exists a series of parameters $\nu_1,\nu_2,\nu_3,\nu_4>0$ such that for all $t\in \{0,1,\ldots,T-1\}$, the following holds:
\[
\begin{aligned}
    &\dist(\z, \partial\Phi(x^{t+1},y^{t+1},z^{t+1},v^{t+1}))\\ \leq\ &\nu_1\|x^t-x^{t+1}\|+\nu_2\|y^t-y_+^t(z^t,v^t)\|+\nu_3\|v^t-v_+^t(z^{t+1})\|+\nu_4\|z^t-z^{t+1}\|.
\end{aligned}
\] 
\end{prop}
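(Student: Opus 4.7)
\medskip
\noindent\textbf{Proof proposal.}
The plan is to decompose $\partial\Phi(x^{t+1},y^{t+1},z^{t+1},v^{t+1})$ block-wise into its $x$-, $y$-, $z$-, and $v$-components, construct an explicit subgradient element in each block using the optimality conditions of the DS-GDA updates, and then bound each of these four terms by a linear combination of the four ``residuals'' $\|x^t-x^{t+1}\|$, $\|y^t-y_+^t(z^t,v^t)\|$, $\|v^t-v_+^t(z^{t+1})\|$, $\|z^t-z^{t+1}\|$. Throughout, I would treat $\Phi$ as $F(x,y,z,v)+\b1_\X(x)+\b1_\Y(y)-2d(y,z,v)+2q(z)$, so that the normal-cone terms from the projection updates live naturally inside $\partial\Phi$.

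First, for the $x$-block, since $d$ and $q$ do not depend on $x$, one has $\partial_x\Phi=\nabla_x F(x,y,z,v)+\partial\b1_\X(x)$. The projection update for $x^{t+1}$ gives $\tfrac{1}{c}(x^t-x^{t+1})-\nabla_x F(x^t,y^t,z^t,v^t)\in\partial\b1_\X(x^{t+1})$, so
$$\tfrac{1}{c}(x^t-x^{t+1})+\nabla_x F(x^{t+1},y^{t+1},z^{t+1},v^{t+1})-\nabla_x F(x^t,y^t,z^t,v^t)\in\partial_x\Phi(x^{t+1},y^{t+1},z^{t+1},v^{t+1}).$$
Lipschitz continuity of $\nabla_x F$ (Assumption~\ref{ass:1} plus the quadratic terms) lets me bound this by a constant multiple of $\|x^t-x^{t+1}\|+\|y^t-y^{t+1}\|+\|z^t-z^{t+1}\|+\|v^t-v^{t+1}\|$, and then the error bound in Lemma~\ref{lem:5}(iv) converts $\|y^t-y^{t+1}\|$ into $\|y^t-y_+^t(z^t,v^t)\|+\|x^t-x^{t+1}\|$, while the averaging updates give $\|v^t-v^{t+1}\|\le\mu\|v^t-v_+^t(z^{t+1})\|+(\text{terms already bounded})$.

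Second, for the $y$-block, Danskin's theorem (valid because $F(\cdot,y,z,v)$ is strongly convex, so $x(y,z,v)$ is a singleton) gives $\nabla_y d(y,z,v)=\nabla_y F(x(y,z,v),y,z,v)$, so
$$\partial_y\Phi=\nabla_y F(x,y,z,v)-2\nabla_y F(x(y,z,v),y,z,v)+\partial\b1_\Y(y).$$
The projection step for $y^{t+1}$ supplies $\tfrac{1}{\alpha}(y^t-y^{t+1})+\nabla_y F(x^{t+1},y^t,z^t,v^t)\in\partial\b1_\Y(y^{t+1})+\nabla_y F(x^{t+1},y^t,z^t,v^t)$, and after substituting at $(x^{t+1},y^{t+1},z^{t+1},v^{t+1})$, using Lipschitzness of $\nabla_y F$ and of $x(\cdot,\cdot,\cdot)$ from Lemma~\ref{lem:2}, each discrepancy is again controlled by the four residuals.

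The main obstacle will be the $z$- and $v$-blocks. Here we need envelope-type gradient formulas $\nabla_z d(y,z,v)=r_1(z-x(y,z,v))$, $\nabla_z q(z)=r_1(z-x(z,v(z)))$, and $\nabla_v d(y,z,v)=-r_2(y-v)$ (applied inside the composition through $q$ and the outer max over $v$). With these,
\begin{align*}
\partial_z\Phi&=r_1(z-x)-2r_1(z-x(y,z,v))+2r_1(z-x(z,v(z))),\\
\partial_v\Phi&=-r_2(y-v)+2r_2(y-v).
\end{align*}
The key difficulty is bounding $\|x(y^{t+1},z^{t+1},v^{t+1})-x(z^{t+1},v(z^{t+1}))\|$ and the analogous $v$-direction term purely in terms of the four residuals. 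For this I would mimic the decomposition used in the proof of Theorem~\ref{prop:suff_dec_1}, inserting the intermediate iterates $x(z^{t+1},v_+^t(z^{t+1}))$ and invoking the proximal error bound of Proposition~\ref{prop:NC_KL} together with the Lipschitz error bounds (i)--(vii) of Lemma~\ref{lem:2} and Lemma~\ref{lem:5}(iv). This gives a bound of the form $C_1\|x^t-x^{t+1}\|+C_2\|y^t-y_+^t(z^t,v^t)\|+C_3\|z^t-z^{t+1}\|+C_4\|v^t-v_+^t(z^{t+1})\|$. Summing the four block-estimates yields the desired safeguard inequality with explicit constants $\nu_1,\nu_2,\nu_3,\nu_4$ depending on $L$, $r_1,r_2,c,\alpha,\beta,\mu$ and the Lipschitz moduli $\sigma_i$.
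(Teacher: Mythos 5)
Your proposal is correct and follows essentially the same route as the paper's proof: the same block-wise decomposition of $\partial\Phi$ (with the indicator normal cones in the $x$- and $y$-blocks), the same use of the projection optimality conditions, Danskin/envelope formulas for $\nabla_y d$, $\nabla_z d$, $\nabla_z q$, $\nabla_v d$, and the same treatment of the hard term $\|x(y^{t+1},z^{t+1},v^{t+1})-x(z^{t+1},v(z^{t+1}))\|$ via the intermediate point $x(z^{t+1},v_+^t(z^{t+1}))$, the proximal error bound of Proposition~\ref{prop:NC_KL}, and the Lipschitz bounds of Lemmas~\ref{lem:2} and~\ref{lem:5}. The only detail worth flagging is that converting the proximal error bound's exponent $\tfrac{1}{2\theta}\ge 1$ into a linear bound requires the boundedness of $\|v_+^t(z^{t+1})-v^t\|$ (the paper likewise invokes a local boundedness condition), but this is a minor point that your outline implicitly covers.
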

\begin{proof}
We observe from the update of $x^{t+1}=\argmin_{x\in \X}\frac{1}{2}\|x-(x^t-c\nabla_{x}F(x^t,y^t,z^t,v^t))\|^2$ and $y^{t+1}=\argmin_{y\in \Y}\frac{1}{2}\|y-(y^t+\alpha\nabla_y F(x^{t+1},y^t,z^t,v^t))\|^2$ that
\begin{equation}\label{eq:opt_cond}
\begin{aligned}
    &\z \in x^{t+1}-x^t+c\nabla_{x}F(x^t,y^t,z^t,v^t)+\partial \b1_{\X}(x^{t+1}),\\
    &\z \in y^{t+1}-y^t-\alpha\nabla_{y}F(x^{t+1},y^t,z^t,v^t)+\partial\b1_{\Y}(y^{t+1}).
\end{aligned}
\end{equation}
Moreover, we have 
\[
\begin{aligned}
    &\partial\Phi(x^{t+1},y^{t+1},z^{t+1},v^{t+1})\\
    =\ &\{\nabla_x F(x^{t+1},y^{t+1},z^{t+1},v^{t+1})+\partial \b1_{\X}(x^{t+1})\}\times\\
    &\{\nabla_y F(x^{t+1},y^{t+1},z^{t+1},v^{t+1})-2\nabla_y d(y^{t+1},z^{t+1},v^{t+1})+\partial\b1_{\Y}(y^{t+1})\}\times\\
    &\{\nabla_z F(x^{t+1},y^{t+1},z^{t+1},v^{t+1})-2\nabla_{z}d(y^{t+1},z^{t+1},v^{t+1})+2\nabla_z q(z^{t+1})\}\times\\
    &\{\nabla_v F(x^{t+1},y^{t+1},z^{t+1},v^{t+1})-2\nabla_vd(y^{t+1},z^{t+1},v^{t+1})\}.
\end{aligned}
\]
This, together with \eqref{eq:opt_cond}, gives
\begin{equation}\label{eq:safe_guard}
\begin{aligned}
&\dist(\z,\partial\Phi(x^{t+1},y^{t+1},z^{t+1},v^{t+1}))\\
 \leq\ & \left\|\frac{x^t-x^{t+1}}{c}-\nabla_{x}F(x^t,y^t,z^t,v^t)+\nabla_x F(x^{t+1},y^{t+1},z^{t+1},v^{t+1})\right\|+\\
 &\left\|\frac{y^t-y^{t+1}}{\alpha}+\nabla_y F(x^{t+1},y^t,z^t,v^t)+\nabla_y F(x^{t+1},y^{t+1},z^{t+1},v^{t+1})-2\nabla_y d(y^{t+1},z^{t+1},v^{t+1}) \right\|+\\
 &\|r_1(z^{t+1}-x^{t+1})-2r_1(z^{t+1}-x(y^{t+1},z^{t+1},v^{t+1}))+2r_1(z^{t+1}-x(z^{t+1},v(z^{t+1})))\|+\\
 &\|r_2(y^{t+1}-v^{t+1})\|.
 \end{aligned}
 \end{equation}
 Under Assumption \ref{ass:1} and utilizing the error bounds in Lemmas \ref{lem:2} and \ref{lem:5}, we can simplify \eqref{eq:safe_guard}. When $z^t$ reaches the local region that satisfies $\beta\|x(z^t,v^{t+1})-z^t\|\leq 1$, we can show that the right-hand side is actually upper bounded by the linear combination of the desired four terms:
 \[
 \begin{aligned}
 &\dist(\z,\partial\Phi(x^{t+1},y^{t+1},z^{t+1},v^{t+1}))\\
 \leq \ &\left(\frac{1}{c}+L_x+r_1\right)\|x^t-x^{t+1}\|+L_x\|y^t-y^{t+1}\|+r_1\|z^t-z^{t+1}\|+\left(\frac{1}{\alpha}+L_y+r_2\right)\|y^t-y^{t+1}\|+\\
 &2L_x\|x^{t+1}-x(y^{t+1},z^{t+1},v^{t+1})\|+r_2\|v^t-v^{t+1}\|+
r_1\|z^{t+1}-x^{t+1}\|+\\
&2r_1\|x(y^{t+1},z^{t+1},v^{t+1})-x(z^{t+1},v(z^{t+1}))\|+r_2\|y^{t+1}-v^{t+1}\|\\
\leq\ & \nu_1\|x^t-x^{t+1}\|+\nu_2\|y^t-y_+^t(z^t,v^t)\|+\nu_3\|z^t-z_+^t(v^{t+1})\|+\nu_4\|v^t-v^{t+1}\|.
\end{aligned}
\]
Here, $\nu_1 = \frac{1}{c}+L_x+r_1+2L_x\sigma_6 +2r_1\sigma_1L_y\alpha\sigma_6+(L_x+\frac{1}{\alpha}+L_y+2r_2+2L_x\sigma_1)L_y\alpha\sigma_6$, $\nu_2 = L_x+\frac{1}{\alpha}+L_y+r_2+2L_x\sigma_1+r_1(2\sigma_1+1)(\sigma_8+1)$, $\nu_3=\frac{r_1}{\beta}+2L_x\sigma_2+2r_1\sigma_1\sigma_3+r_2\sigma_3$, and $\nu_4=\frac{r_2}{\mu}+2r_1\sigma_1\sigma_5+2r_1\sqrt{\omega_1}$.
 Moreover, since $f(\cdot,y)$ and $f(x,\cdot)$ is semi-algebraic \citep[Theorem 3]{bolte2014proximal}, using results in \citep[Example 2]{bolte2014proximal}, we find that $\Phi(x,y,z,v)=F(x,y,z,v)-2d(y,z,v)+2q(z)$ is also semi-algebraic in $x,y,z,v$. Based on Assumption \ref{ass:1}, $\Phi(x,y,z,v)$ is continuous in $x,y,z,v$. Building on the unified convergence analysis framework in \citep{attouch2013convergence}, we can conclude that $\{(x^t,y^t,z^t,v^t)\}$ converges to one stationary point $(x^*,y^*,z^*,v^*)$. On top of Lemma \ref{lem:14} and $\lim\limits_{t\to \infty}\|x^t-x^{t+1}\|=0, \lim\limits_{t\to \infty}\|y^t-y^{t+1}\|=0$, we have $\{(x^t,y^t)\}$ actually converges to a GS. For $z^t$, we have 
   \[
        \begin{aligned}
      &\|z^t-x^*(z^t)\|\\
      \leq\ & \|z^t-x^{t+1}\|+\|x^{t+1}-x(y^t,z^t,v^t)\|+\|x(y^t,z^t,v^t)-x(z^t,v^t)\|+\|x(z^t,v^t)-x^*(z^t)\|\\
      \leq\ & \frac{\|z^t-z^{t+1}\|}{\beta}+\sigma_6\|x^t-x^{t+1}\|+\sigma_1\sigma_8\|y^t-y_+^t(z^t,v^t)\|+\omega_2\|v^t-y(z^t,v^t)\|\\
      \leq\ & \frac{\|z^t-z^{t+1}\|}{\beta}+\sigma_6\|x^t-x^{t+1}\|+\sigma_1\sigma_8\|y^t-y_+^t(z^t,v^t)\|+\omega_2\left(\frac{\|v^t-v_+^t(z^{t+1})\|}{\mu}+\right.\\
      &\left.\sigma_3\|z^t-z^{t+1}\|+2L_y\alpha\sigma_6\|x^{t+1}-x^t\|+2(\sigma_8+1)\|y_+^t(z^t,v^t)-y^t\|\right).
  \end{aligned}
  \]
  Since $\lim\limits_{t\to \infty} \|z^t-z^{t+1}\|=0$, $\lim\limits_{t\to \infty} \|x^t-x^{t+1}\|=0$, $\lim\limits_{t\to \infty} \|y^t-y_+^t(z^t,v^t)\|=0$, $\lim\limits_{t\to \infty} \|v^t-v_+^t(z^{t+1})\|=0$, we conclude $\{z^t\}$ convergs to an OS.
\end{proof}	
	
	\section{Relationship Between Different Notions of Stationary Points}\label{sec-stationary}
	In this section, we illustrate the quantitative relationship among several notions of stationary measure.
 \begin{defi}\label{def:2}
	The point $(x,y)\in \mathcal{X}\times \mathcal{Y}$ is said to be an
		\begin{itemize}
			\item $\epsilon$-optimization game stationary point {\rm (OS)} if  
			\[
			\|\prox_{\frac{\max_{y\in \Y} f(\cdot,y)}{r_1}+\b1_{\X}}(x)-x\|\leq\epsilon;
   \]
   \item $\epsilon$-game stationary point {\rm (GS)} if 
			\[
			\dist(\z, \nabla_x f(x,y)+\partial \b1_{\X} (x))\leq \epsilon, \dist(\z, -\nabla_y f(x,y)+\partial \b1_{\Y} (y))\leq \epsilon.
			\]
		\end{itemize}
	\end{defi}
Recalling that $x^*(z)=\argmin_{x\in \X} \max_{y\in \Y}f(x,y)+\frac{r_1}{2}\|x-z\|^2$, we have $\|\prox_{\frac{\max_{y\in \Y} f(\cdot,y)}{r_1}+\b1_{\X}}(x)-x\|=\|x^*(x)-x\|$. With Proposition \ref{prop:dual_eb_kl}, we are ready to prove the relationship between OS and GS.
     \begin{prop} Suppose that Assumption \ref{ass:2} holds.
         If $(x,y)\in \X\times Y$ is a $\epsilon$-GS, then it is a $\mO(\epsilon^{\min\{1,\frac{1}{2\theta}\}})$-OS.
     \end{prop}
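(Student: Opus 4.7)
The strategy is to use Proposition \ref{prop:dual_eb_kl} with the distinguished choice $z=x$ and $v=y$, linking the OS measure $\|x^*(x)-x\|$ (where $x^*(x)=\prox_{\max_{y}f(\cdot,y)/r_1+\b1_\X}(x)$) to the Moreau--Yosida-regularized saddle point $x(x,y)$ and the associated dual optimizer $y(x,y)$. A triangle inequality splits the task into two pieces:
\[
\|x^*(x)-x\|\le \|x^*(x)-x(x,y)\|+\|x(x,y)-x\|,
\]
the first handled by the dual error bound and the second by direct Lipschitz/strong-monotonicity arguments driven by the $\epsilon$-GS hypothesis.

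First I would bound $\|x(y,x,y)-x\|$, where $x(y,x,y)=\argmin_{x'\in\X} f(x',y)+\tfrac{r_1}{2}\|x'-x\|^2$. Its optimality condition makes $x(y,x,y)$ a root of the $(r_1-L_x)$-strongly monotone operator $T(u)=\nabla_x f(u,y)+r_1(u-x)+\partial\b1_\X(u)$, while the $\epsilon$-GS condition says $T(x)$ contains a vector of norm at most $\epsilon$. Strong monotonicity then yields $\|x(y,x,y)-x\|\le \epsilon/(r_1-L_x)$. Next, using that $d(\cdot,x,y)$ is $(r_2-L_y)$-strongly concave (Lemma \ref{lem:3}) and that $\nabla_y d(y,x,y)=\nabla_y f(x(y,x,y),y)$ (by Danskin's theorem, noting $r_2(y-v)|_{v=y}=0$), I get
\[
\|y-y(x,y)\|\le \frac{1}{r_2-L_y}\,\dist\!\bigl(\z,-\nabla_y f(x(y,x,y),y)+\partial\b1_\Y(y)\bigr)\le \frac{\epsilon}{r_2-L_y}\Bigl(1+\frac{L_y}{r_1-L_x}\Bigr),
\]
where the second step combines $\epsilon$-GS with the $L_y$-Lipschitzness of $\nabla_y f(\cdot,y)$ and the bound on $\|x(y,x,y)-x\|$.

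Third, I would transfer these bounds to $x(x,y)=x(y(x,y),x,y)$ via Lemma \ref{lem:2}(i): $\|x(x,y)-x(y,x,y)\|\le\sigma_1\|y(x,y)-y\|$, giving $\|x(x,y)-x\|=\mO(\epsilon)$. Applying Proposition \ref{prop:dual_eb_kl} to the pair $(x,y)$ yields
\[
\|x^*(x)-x(x,y)\|^2\le \omega_2\|y-y(x,y)\|^{1/\theta}=\mO(\epsilon^{1/\theta}),
\]
hence $\|x^*(x)-x(x,y)\|=\mO(\epsilon^{1/(2\theta)})$. Summing the two pieces gives the announced rate $\mO(\epsilon^{\min\{1,1/(2\theta)\}})$.

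The only subtle point is that $y$ itself need not be close to the unconstrained maximizer of $f(x,\cdot)$ (which may be far, since $f(x,\cdot)$ is only K\L\ rather than strongly concave); strong concavity is recovered only after introducing the quadratic $-\tfrac{r_2}{2}\|y'-y\|^2$, which is exactly what anchors $y(x,y)$ close to $y$. This is where the choice $v=y$ is crucial, and where one must verify that the evaluation point $v=y$ makes the relevant gradient term vanish so that $\epsilon$-GS alone controls $\dist(\z,-\nabla_y d(y,x,y)+\partial\b1_\Y(y))$. The K\L\ exponent then enters only through Proposition \ref{prop:dual_eb_kl}, translating an $\mO(\epsilon)$ dual error into an $\mO(\epsilon^{1/(2\theta)})$ primal error and explaining the $\min\{1,1/(2\theta)\}$ in the exponent.
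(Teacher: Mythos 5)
Your proposal is correct and follows essentially the same route as the paper: the same triangle-inequality split $\|x^*(x)-x\|\le\|x^*(x)-x(x,y)\|+\|x(x,y)-x\|$, with Proposition \ref{prop:dual_eb_kl} at $z=x$, $v=y$ supplying the $\mO(\epsilon^{1/(2\theta)})$ term and Lemma \ref{lem:2}(i) transferring the dual error $\|y-y(x,y)\|$ to $x(x,y)$. The only difference is minor: you bound $\|x-x(y,x,y)\|$ and $\|y-y(x,y)\|$ by direct strong-monotonicity arguments (which is valid and slightly cleaner), whereas the paper routes these through the projected-gradient error bounds of Lemma \ref{lem:5} and \citep{pang1987posteriori}; both yield the required $\mO(\epsilon)$ control.
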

    Building on the proposition \ref{prop:dual_eb_kl}, we have
     \begin{equation}\label{eq:os_ub}
     \begin{aligned}
         \|x^*(x)-x\|&\leq \|x^*(x)-x(x,y)\|+\|x(x,y)-x\|\\
         &\leq \omega_2 \|y-y(x,y)\|^{\frac{1}{2\theta}}+\|x(x,y)-x\|\\
         &\leq \omega_2\|y-y(x,y)\|^{\frac{1}{2\theta}}+\|x(y,x,v)-x\|+\sigma_1\|y-y(x,y)\|.
     \end{aligned}
     \end{equation}
    By the nonexpansiveness of projection operator and error bounds in Lemmas \ref{lem:3} and \ref{lem:5}, we can further bound $\|y-y(z,v)\|$ as follows:
    		\[
		\begin{aligned}
			&\|y-y(z,v)\|\leq \sigma_8 \|y-y_{+}(z,v)\|\\
			=\ &
   \sigma_8\|y-\proj_{\Y}(y+\alpha\nabla_{y}F(x(y,z,v),y,z,v))\|\\
			\leq\ & \sigma_8\|y-\proj_{\Y}(y+\alpha\nabla_{y}F(x,y,z,v))\|\ +\\
   &\sigma_8\|\proj_{\Y}(y+\alpha\nabla_{y}F(x,y,z,v))-\proj_{\Y}(y+\alpha\nabla_{y}F(x(y,z,v),y,z,v))\|\\
			\leq\ &\sigma_8\|y-\proj_{\Y}(y+\alpha\nabla_{y}F(x,y,z,v))\|+ L_y\alpha\sigma_8\|x-x(y,z,v)\|\\
	\leq\ & \sigma_8\|y-y(x,z,v)\|+\sigma_8\|y(x,z,v)-\proj_{\Y}(y+\alpha\nabla_{y}F(x,y,z,v))\|+L_y\alpha\sigma_8\|x-x(y,z,v)\|\\
	\leq\ &  \sigma_8\|y-y(x,z,v)\|+\sigma_8\left(1+\alpha L_y+\alpha r_2\right)\|y(x,z,v)-y\|+ L_y\alpha\sigma_8\|x-x(y,z,v)\|\\
    \leq\ &  L_y\alpha\sigma_8\|x-x(y,z,v)\|+\left(2+\alpha L_y+\alpha r_2\right)\sigma_8\|y-y(x,z,v)\|.
		\end{aligned}
		\]
  Plugging this bound into \eqref{eq:os_ub}, we get
  \begin{equation*}
   \begin{aligned}
        \|x^*(x)-x\|&\leq \sigma_8\sigma_1\left(2+\alpha L_y+\alpha r_2\right)\|y-y(x,x,y)\|+\left(L_y\alpha\sigma_8\sigma_1+1\right)\|x-x(y,x,y)\|+\\
        &\quad \ \omega_2\left(L_y\alpha\sigma_8\|x-x(y,x,y)\|+\left(2+\alpha L_y+\alpha r_2\right)\sigma_8\|y-y(x,x,y)\|\right)^{\frac{1}{2\theta}}.
   \end{aligned}
  \end{equation*}
   Next, we will explore the relationship between $\|x-x(y,x,v)\|$ and $\dist(\z, \nabla_{x}f(x,y)+\partial\b1_{\mathcal{X}}(x))$. Let $x_{+}(y,x,v):=\proj_{\X}(x-c\nabla_{x}F(x,y,x,v))$. Then, from the primal error bound (see \cite{pang1987posteriori}), we know that 
		\[
		\|x-x(y,x,v)\| \leq \frac{cL_x+cr_1+1}{cr_1-cL_x}\|x-x_{+}(y,x,v)\|.
		\]
		Moreover, since $\nabla_{x}F(x,y,x,v)=\nabla_{x}f(x,y)$, it follows from \cite[Lemma 4.1]{li2018calculus} that
		\[
		\begin{aligned}
			\|x-x(y,x,v)\| 
			\leq\ &  \frac{cL_x+cr_1+1}{cr_1-cL_x}\|x-x_{+}(y,x,v)\|\\
			=\ &  \frac{cL_x+cr_1+1}{cr_1-cL_x}\|x-\proj_{\X}(x-c\nabla_{x} f(x,y))\|\\
			\leq\ &   \frac{cL_x+cr_1+1}{r_1-L_x} \dist(\z, \nabla_{x}f(x,y)+\partial\b1_{\mathcal{X}}(x)).
		\end{aligned}
		\]
		A similar analysis can be applied to derive the bounds for $\|y-y(x,z,y)\|$. Thus, if $(x,y)$ is a $\epsilon$-GS, then it is an $\mO(\epsilon^{\min\{1,\frac{1}{2\theta}\}})$-OS.
  	
\section{Details about Examples in Section \ref{sec:behaviors} and Behaviors of Wrong Selection of Smoothing Sides in S-GDA}\label{sec:example}
This section will check all the regularity conditions for the examples mentioned in Section \ref{sec:behaviors}.
The violation of the global K\L{} condition can be easily vertified by plotting the figures for these examples. Therefore, we mainly discuss whether they satisfy ``weak MVI'' and ``$\alpha$-interaction dominant'' conditions, which are two representative classes of conditions in the nonconvex-nonconcave setting. Moreover, we will also give an example showing the slow convergence caused by the wrong selection of S-GDA.

\subsection{Proof of Proposition for ``Forsaken'' Example}\label{sec:forsaken}
This subsection considers the ``Forsaken'' example in \citep[Example 5.2]{hsieh2021limits} on the constraint sets $\X=\Y=\{z:-1.5\leq z\leq 1.5\}$. In \citep{pethick2022escaping}, it is vertified that the ``Forsaken'' example violates ``weak MVI'' condition with $\rho<-\frac{1}{2L}$. Therefore, we only consider the $\alpha$-interaction dominant condition here. 
By a simple calculation, we get $\nabla_{xx}^2 f(x,y)=\frac{1}{2}-6x^2+5x^4$, $\nabla_{xy}^2f(x,y)=\nabla_{yx}^2 f(x,y)=1$, and $\nabla_{yy}^2f(x,y)=-\frac{1}{2}+6y^2-5y^4$. Armed with these, $\alpha$ can be found globally by minimizing the following:
\[
\begin{aligned}
  &\nabla_{xx}^2 f(x,y)+\nabla_{xy}^2 f(x,y)(\eta \b1-\nabla_{yy}^2 f(x,y))^{-1}\nabla_{yx}^2 f(x,y)\\
  =\ & \frac{1}{2}-6x^2+5x^4+\left(\eta+\frac{1}{2}-6y^2+5y^4\right)^{-1}.
\end{aligned}
\]
It is less than zero when $[x;y]=[1;0]$. That is, $\alpha <0$ in the constraint set $\X$, which means the $\alpha$-interaction dominant condition is violated for the primal variable $x$. Similar proof could be adapted for the dual variable. This rules out the convergence guarantees of damped PPM, which is validated in Figure \ref{fig:Example}.

\subsection{Proof of Proposition for ``Bilinearly-coupled Minimax'' Example}\label{sec:bilinear}
This ``Bilinearly-coupled Minimax'' example is mentioned as a representative example where $\alpha$ is in the interaction moderate regime \citep{grimmer2020landscape}. Experiments also validate that the solution path will be globally trapped into a limit cycle (see Figure \ref{fig:Example}). For this reason, we only check the ``weak MVI'' condition. In this example, $\X=\Y=\{z:-4\leq z\leq 4\}$, 
$G(u)=[\nabla_x f(x,y);-\nabla_y f(x,y)]=[4x^3-20x+10y;4y^3-20y-10x]$, and $u^\star=[0;0]$. Then, $\rho$ can be found by globally minimizing $\rho(u)\coloneqq \frac{\langle G(u),u-u^\star\rangle}{\|G(u)\|^2}$ for all $u\in \X \times \Y$. Notice that
\[
    \frac{\langle G(u),u-u^\star\rangle}{\|G(u)\|^2}=\frac{x^4+y^4-5x^2-5y^2}{(2x^3-10x+5y)^2+(2y^3-10y-5x)^2}.
\]
We have $\rho(u)=\frac{\langle G(u),u-u^\star\rangle}{\|G(u)\|^2} =-\frac{4}{89}$ when $u=[x;y]=[0;1]$, which implies that $\rho <-\frac{4}{89}$. Moreover, we find $L=172$, so $\rho <-\frac{4}{89} <-\frac{1}{344}=-\frac{1}{2L}$. We conclude that this example does not satisfy the ``weak MVI'' condition and the limit cycle phenomenon is actually observed in Figure \ref{fig:Example}.

\subsection{Proof of Proposition for ``Sixth-order polynomial'' Example}
We demonstrate that this example violates both the 'weak MVI' and '$\alpha$-interaction' conditions. To provide evidence of this violation, we follow the same approach used to prove violations in Sections \ref{sec:forsaken} and \ref{sec:bilinear} and present counter-examples. On the one hand, we found that $\rho\leq \rho(\hat{u})=-0.0795 <-0.0368=-\frac{1}{2L}$ with $\hat{u}=[x;y]=[-1;0.5]$, which implies the violation of ``weak MVI'' condition. On the other hand, we have 
\[
\begin{aligned}
  &\nabla_{xx}^2 f(x,y)+\nabla_{xy}^2 f(x,y)(\eta \b1-\nabla_{yy}^2 f(x,y))^{-1}\nabla_{yx}^2 f(x,y)\bigg|_{[x;y]=[0;1]}\\
  =\ & (21609\exp(-1/50))/(625(\eta + (77061*\exp(-1/100))/25000)) - (4989\exp(-1/100))/500.
\end{aligned}
\]
It is less than zero when $\eta\geq L$, which indicates the violation of the $\alpha$-interaction condition.

\subsection{Proof of Proposition for ``PolarGame'' Example}
In this subsection, we check the two conditions for the ``PolarGame'' example. Firstly,  we have $\rho\leq \rho(\hat{u})=-0.3722<-0.0039=-\frac{1}{2L}$ with $\hat{u}=[0.8;0]$. Thus, it does not satisfy the ``weak MVI'' condition. Next, consider the following at $[x;y]=[0.8;0]$:
\[
\begin{aligned}
  &\nabla_{xx}^2 f(x,y)+\nabla_{xy}^2 f(x,y)(\eta \b1-\nabla_{yy}^2 f(x,y))^{-1}\nabla_{yx}^2 f(x,y)\bigg|_{[x;y]=[0.8;0]}\\
  =\ & 
1/(\eta - 279/625) - 779/125.
\end{aligned}
\]
It is less than zero when $\eta \geq L$. Thus, the ``PolarGame'' violates the $\alpha$-interaction condition.

\subsection{Wrong Smoothing Side of S-GDA}
For S-GDA, we show that if we choose the wrong side, it will result in a slow convergence. To validate this, we conduct a new experiment for the KL-NC problem
\[
\min_{x\in \X}\max_{y\in \Y} f(x,y)=2x^2-y^2+4xy^6+\frac{4y^3}{3}-\frac{y^4}{4},
\]
where $\X=\Y=\{z:-1\leq z\leq 1\}$. With a wrong smoothing side, S-GDA (with primal smoothing) leads to a slower convergence compared with dual smoothing (see Figure \ref{fig:wrong}).

\begin{figure}
    \centering
  \includegraphics[width=0.4\textwidth]{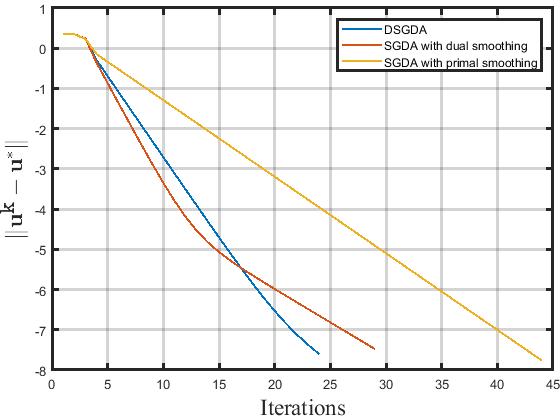}
    \caption{Converegnce behavior of DS-GDA and different smoothing sides for S-GDA.}
    \label{fig:wrong}
\end{figure}

\end{document}